\documentclass[12pt,letterpaper,english]{amsart}
\usepackage[letterpaper,textwidth=6in,textheight=8.75in,centering]{geometry}

\usepackage[utf8]{inputenc}
\usepackage[T1]{fontenc}
\usepackage{lmodern}
\usepackage{babel}
\usepackage{amsmath,amsthm,amssymb}
\usepackage{enumitem,microtype,xurl}
\usepackage[pdfusetitle,bookmarks=true,bookmarksnumbered=false,
 bookmarksopen=true,bookmarksopenlevel=1,pdfborder={0 0 1},
 backref=false,colorlinks=false]{hyperref}
\hypersetup{bookmarksdepth=2,
 pdftitle={Spectral G\r{a}rding Elliptic Equations},
 pdfauthor={Hao Fang and Biao Ma}}
\allowdisplaybreaks
\numberwithin{equation}{section}

\theoremstyle{plain}
\newtheorem{theorem}{Theorem}[section]
\newtheorem{proposition}[theorem]{Proposition}
\newtheorem{lemma}[theorem]{Lemma}
\newtheorem{corollary}[theorem]{Corollary}

\theoremstyle{definition}
\newtheorem{definition}[theorem]{Definition}
\newtheorem{example}[theorem]{Example}
\theoremstyle{remark}
\newtheorem{remark}[theorem]{Remark}
\newtheorem*{remark*}{Remark}

\newcommand{\one}{\mathbf1}
\newcommand{\ddc}{dd^c}

\newcommand{\C}{\mathcal C}

\title[Spectral G\AA rding elliptic equations]
{Spectral G\aa rding Elliptic Equations}
\author{Hao Fang}
\address{Department of Mathematics, University of Iowa, Iowa City, IA 52246, USA}
\email{hao-fang@uiowa.edu}
\thanks{H.~F.'s work is partially supported by a Simons Foundation
mathematics collaboration grant.}
\author{Biao Ma}
\address{School of Mathematical Sciences, East China Normal University,
500 Dongchuan Road, Shanghai, China}
\email{bma@math.ecnu.edu.cn}
\thanks{B.~M. is partially supported by NSFC grant No.~12471052.}
\date{}
\begin{document}
\begin{abstract}
We study spectral G\aa rding elliptic equations on compact K\"ahler
manifolds. Smooth solvability is equivalent to the existence of a smooth
representative in the admissible domain. The concavity theorem for ideal
G\aa rding polynomials gives a fixed concave formulation to which
Sz\'ekelyhidi's second-order argument applies in every degree
$2\le d\le n$, extending Lin's top-degree solvability theorem.
Combined with the numerical criteria of
Fu--Li--Zhang--Zhang and Chen--Nie--Xu, this yields a Nakai--Moishezon
type criterion for solvability on compact K\"ahler manifolds.

A solution converts the local ideal G\aa rding structure into a global
Lorentzian structure. Its mixed derivatives give a strict nonlinear
Hodge--Riemann hierarchy from the degree of the equation down to degree
two, and the homogenization of the intersection polynomial is Lorentzian in nef class directions. Numerical criteria then produce explicit
Hodge--Riemann classes, Lefschetz isomorphisms in bidegree $(1,1)$, and
weighted Hodge-index inequalities.
\end{abstract}
\maketitle

\section{Introduction}\label{introsection}

In this paper, we study a class of spectral G\aa rding elliptic equations on compact K\"ahler manifolds.
In the two preceding papers,
joint with Wu, we established Liouville rigidity for degenerate Hessian
equations \cite{Liouville} and the $L^\infty$ estimate through
determinantal capacity \cite{Linfty} for G\aa rding elliptic operators. This paper completes the sequence for the spectral case.
 We prove the second-order estimate and smooth
solvability of spectral G\aa rding equations in every degree
$2\le d\le n$, with a Nakai--Moishezon type
criterion on compact K\"ahler manifolds as a corollary. Then we return from the PDE
to algebra, showing that a solution turns the local ideal G\aa rding
structure of the PDE into a global Lorentzian polynomial structure on the
cohomological level.

 Our theory of G\aa rding and ideal G\aa rding
polynomials \cite{Garding,Ideal} is the local algebraic framework of our study of general Hessian equations. It is inspired by the classical
Caffarelli--Nirenberg--Spruck theory  \cite{CNS},
in which both ellipticity and concavity of the operator are required.
The ideal G\aa rding theory provides the algebraic foundation for
our study: the admissible domain, the ellipticity and the
concavity are properties of the polynomial. In \cite{Linfty} we
introduced the corresponding G\aa rding elliptic operators, that is,
polynomials on the Hermitian forms that are G\aa rding with respect to the
cone of semipositive forms. The corresponding PDEs
carry good algebraic properties and include many classical
examples, among them the complex Hessian and Hessian quotient equations,
the $J$-equation and the supercritical deformed Hermitian--Yang--Mills
equation. We call such an operator \emph{ideal} when the G\aa rding components of its
iterated derivatives are convex; equivalently, its positive scalarizations
are ideal G\aa rding polynomials. The spectral
operators belong to this class.

An equation is \emph{spectral} when the complex Hessian
enters only through its spectrum. The operators considered here are
polarizations of univariate polynomials, evaluated at the eigenvalues
of $A_\omega=\chi^{-1}\omega$. Classical examples include the complex
Monge--Amp\`ere equation \cite{Yau78}, the complex $k$-Hessian equation \cite{HMW,DK,Szekelyhidi,Zhang17},
the Hessian quotient equations \cite{Szekelyhidi}, the $J$-equation
\cite{Chen00,Wei06,SW08},
the inverse $\sigma_k$ equations \cite{FLM11}, and the supercritical
deformed Hermitian--Yang--Mills equation \cite{LYZ,CJY}.
The equations of top degree $d=n$, the
general inverse $\sigma_k$ equations, were first studied by Lin
\cite{LinConvex,LinCorr,Lin}. His work on right-Noetherian polynomials,
whose defining feature is the monotone root sequence recalled in
Section~\ref{intro:setup}, predates our G\aa rding polynomial theory and was a
motivation for it; right-Noetherian polynomials are exactly the univariate
G\aa rding polynomials \cite[Section~9.2]{Garding}. Lin established smooth solvability under a
subsolution condition \cite[Theorem~1.4]{Lin}. In this paper we treat the general degree $2\le d\le n$ on compact
K\"ahler manifolds. Further history is recalled in
Subsection~\ref{intro:history}.

\subsection{Spectral G\aa rding equations and the main theorem}
\label{intro:setup}

Let $(X^n,\chi)$ be a compact connected K\"ahler manifold,
and let $\omega_0$ be a smooth closed real $(1,1)$-form.
We follow the convention of \cite{FMForms} and consider a closed mixed-degree real form
\[
\Lambda=\sum_{k=0}^n\Lambda^{[k]},
\] where the superscript
$[k]$ denotes the $(k,k)$-component of a mixed-degree form.
 For a
$(1,1)$-form $\eta$, write
$e^\eta=\sum_{j=0}^n\eta^j/j!$.
We study the fully nonlinear equation
\begin{equation}\label{equation}
 \bigl(\Lambda\wedge e^{\omega_u}\bigr)^{[n]}=0,
 \qquad \omega_u=\omega_0+dd^cu,
\end{equation}
for a smooth real-valued function $u$, with a fixed smooth closed real form $\omega_0$.
In this paper, we focus on the spectral case
\begin{equation}\label{Lambda-def}
 \Lambda=\sum_{j=0}^d
 a_j\frac{\chi^{n-j}}{(n-j)!},
\end{equation}
where $2\le d\le n$, $a_1,\ldots,a_d$ are real
constants, $a_d>0$, and
$a_0\in C^\infty(X,\mathbb R)$. The equation depends only on the eigenvalues of $A_{\omega_u}:=\chi^{-1}\omega_u$.  In fact,  let $\sigma_j$
denote the $j$th elementary symmetric function,
with $\sigma_0\equiv1$, and set
\begin{equation}\label{polynomial}
 P_x(\lambda)=\sum_{j=0}^d a_j\sigma_j(\lambda),
 \qquad \lambda\in\mathbb R^n.
\end{equation}
Then
\eqref{equation} becomes the spectral equation
\[
P_x(\lambda(A_{\omega_u}))=0.
\]
For a Hermitian
matrix $A$, we will write $P_x(A)=P_x(\lambda(A))$. 

Put $\one=(1,\ldots,1)\in\mathbb R^n$. The diagonal restriction
\[
 p_x(s):=P_x(s\one)
\]
is a univariate polynomial in $s$. We say a univariate polynomial
of degree $m$ has a \emph{monotone root sequence}
if each of its derivatives of order at most $m-1$ has a real root and their
largest roots are nonincreasing. This is Lin's right-Noetherian condition,
equivalent to the univariate G\aa rding condition in our theory
\cite[Section~9.2]{Garding}. For $0\le k\le d-1$, let $\rho_k$ be the largest
real root of $p_x^{(k)}$. For $k\ge1$, $\rho_k$ is independent of $x$.
Our main analytic hypothesis is the \emph{strict first-gap condition}:
\begin{equation}\label{strict}
 \rho_0(x)>\rho_1\ge\rho_2\ge\cdots\ge\rho_{d-1}
 \qquad(x\in X).
\end{equation}
Thus $p_x(s)$ has a monotone root sequence and
 has a strict first root gap at every $x$. This is Lin's
strictly right-Noetherian condition \cite[Definition~2.6]{Lin}.
By the polarization theorems of \cite[Section~9.2]{Garding} and
\cite[Theorems~8.1--8.2]{Ideal},
\eqref{strict} makes $P_x(\lambda)$ an ideal G\aa rding polynomial. 

For a G\aa rding
polynomial $G$, let $\C_G$ denote its G\aa rding component. The admissible
domain is the first derived G\aa rding component
\cite[Definition~8.4 and Proposition~5.5]{Garding}:
\begin{equation}\label{domains}
 \mathcal A=\C_{P_x}^{(1)}=\bigcap_{i=1}^n\C_{\partial_iP_x}.
\end{equation}
It is independent of $x$, convex, and invariant under positive
semidefinite increments, and $P_x$ is elliptic there; see
Section~\ref{subsolutionsection}. We also use $\mathcal A$ for its spectral
lift to Hermitian matrices.

\begin{theorem}\label{main}
Let $\omega_0$ be a closed smooth real $(1,1)$-form with $\alpha=[\omega_0]$. Assume \eqref{strict} and the compatibility condition
\begin{equation}\label{compatibility}
 \int_X\bigl(\Lambda\wedge e^{\omega_0}\bigr)^{[n]}=0.
\end{equation}
Then the following are equivalent:
\begin{enumerate}[label=\textup{(\roman*)}]
\item There is a smooth closed representative
$\underline\omega=\omega_0+dd^c\underline u$ such that
$A_{\underline\omega}(x)\in\mathcal A$ for every $x\in X$.
\item There is a smooth admissible $\omega=\omega_0+dd^cu$
satisfying
\begin{equation}\label{target}
 P_x(A_\omega)=0.
\end{equation}
\end{enumerate}
$u$ is unique modulo constants. 
\end{theorem}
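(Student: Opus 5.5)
(ii)$\Rightarrow$(i) is immediate: the solution $\omega$ is admissible by definition, so $\underline\omega=\omega$ works. The content is (i)$\Rightarrow$(ii) together with uniqueness. Our plan is the continuity method along a path joining a trivially solvable equation to \eqref{target}. We rewrite the equation in a fixed concave form, following Sz\'ekelyhidi. On $\mathcal A$ we have $P_x>0$ exactly on the G\aa rding component $\C_{P_x}\subset\mathcal A$, and $P_x=0$ on part of its boundary. The strict first-gap condition \eqref{strict} lets us use the concavity theorem for ideal G\aa rding polynomials. This yields a function $F(A)$, built from the ratio $P_x/\partial P_x$ (the ``distance to the boundary'' in the direction $\one$), which is concave and elliptic on $\mathcal A$ with values in an interval. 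The equation $P_x(A)=0$ with $A\in\mathcal A$ is then equivalent to $F(A)=h(x)$ for a suitable level $h$.

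Concretely, we define $F(A)=\sup\{t\ge0:\ P_x(A-t I)\ \text{has}\ A-tI\in\mathcal A\cup\C_{P_x}\}$, or equivalently the largest $t$ with $p$-root structure along $A-t\one$. Then (ii) reads $F(A_\omega)=0$, and monotonicity of $F$ along $\one$ is guaranteed by the monotone root sequence. We set up the path $F(A_{\omega_t})=(1-t)F(A_{\underline\omega})+t\cdot0+c_t$, with constants $c_t$ fixed by the cohomological normalization. Equivalently, we deform $a_0$ linearly, $a_0^{t}=(1-t)a_0^{\underline\omega}+ta_0$, with $a_0^{\underline\omega}$ chosen so that $\underline\omega$ solves the $t=0$ equation, plus a constant correction $c_t$ to keep \eqref{compatibility}. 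The quantity $\int_X(\Lambda\wedge e^{\omega})^{[n]}$ depends only on $[\omega]$ and $\Lambda$, so it is affine in $a_0$ and constant along the class. The strict gap must persist along the path; it holds because $\rho_0$ varies continuously and $\rho_k$, $k\ge1$, are fixed. We take $\underline\omega$ strictly in $\mathcal A$, which is an open condition. Openness then follows from the implicit function theorem. The linearization is $L\phi=\partial_iP_x\,(dd^c\phi)_{i\bar i}$-type elliptic, with no zeroth-order term, so it has index zero and its kernel is the constants; the constant $c_t$ absorbs the cokernel. Uniqueness modulo constants comes from the same structure: for two solutions, $0=P(A_{\omega_1})-P(A_{\omega_2})=\int_0^1 \partial P\,ds\cdot dd^c(u_1-u_2)$, which is elliptic along the segment by convexity of $\mathcal A$, and the maximum principle gives $u_1-u_2$ constant.

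Closedness requires a priori estimates. The $C^0$ bound comes from the $L^\infty$ estimate via determinantal capacity \cite{Linfty}, or from Sz\'ekelyhidi's ABP argument using $\underline\omega$ as a $\C$-subsolution. Here $\underline\omega\in\mathcal A$ must be upgraded to a $\C$-subsolution in Sz\'ekelyhidi's sense for $F$: for each $x$, the set $\{\mu\ge0: A_{\underline\omega}+\mu\in\overline{\{F\ge h\}}\}$ must be bounded. This uses that $\partial_iP_x>0$ on $\mathcal A$ and that $P_x\to+\infty$ along positive directions with a strict gap, which the ideal structure guarantees. The $C^2$ estimate is Sz\'ekelyhidi's second-order argument applied to the fixed concave $F$. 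This is the main obstacle: one needs $F$ concave on all of $\mathcal A$ (not just on $\{P>0\}$), a uniform ellipticity lower bound $\sum F^{i\bar i}\ge c>0$, and the unboundedness condition on the relevant level sets uniformly in $t$. We expect these to follow from the concavity theorem \cite{Ideal} together with the compactness afforded by the strict gap \eqref{strict}, which is uniform in $x$ on compact $X$. The gradient bound follows by the Dinew--Ko\l odziej blow-up argument, Evans--Krylov gives $C^{2,\alpha}$, and bootstrapping yields smoothness.
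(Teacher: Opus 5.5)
Your path fails at its starting point, and this is a genuine gap. Write $P_x=\widetilde P+a_0$. For a polynomial $\widetilde P+a$, the strict first-gap condition is $a<-\widetilde P(\rho_1\one)$. Your $t=0$ equation uses $a_0^{\underline\omega}=-\widetilde P(A_{\underline\omega})$, so it satisfies the gap at $x$ only if $\widetilde P(A_{\underline\omega}(x))>\widetilde P(\rho_1\one)$. Hypothesis (i) gives only $A_{\underline\omega}\in\mathcal A$, which does not imply this.

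Take $P_x=\sigma_2-f$ with $n=3$, so $\rho_1=0$ and $\mathcal A=\{\lambda_i+\lambda_j>0\}$. At a point where $\lambda(A_{\underline\omega})=(1,1,-0.9)\in\mathcal A$, your initial polynomial is $\sigma_2+0.8$. Its diagonal restriction $3s^2+0.8$ has no real root, so it is not G\aa rding at all, and $\rho_0$ does not exist. Worse, $\{\lambda\in\mathcal A:\sigma_2(\lambda)\ge-0.8\}$ is not convex: it contains $(1,1,-0.9)$ and $(2,2,-1.2)$ but not their midpoint $(1.5,1.5,-1.05)$, where $\sigma_2=-0.9$.

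So for small $t$ no concave elliptic reformulation of your equations exists. Sz\'ekelyhidi's subsolution dichotomy and $C^2$ estimate are unavailable there, and closedness fails. Your claim that the gap persists because $\rho_0$ varies continuously is false. For $\sigma_k-f$, your starting requirement amounts to asking that $\underline\omega$ be $k$-positive, which is exactly the stronger hypothesis the theorem is designed to avoid.

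Your explicit $F$ is also degenerate. Since $\C_{P_x}\subset\mathcal A$, it is just the distance from $A$ to $\partial\mathcal A$ along $-I$, which is independent of $a_0$. It is positive at every solution, because solutions are interior ($P_x\le-\delta<0$ on $\partial\mathcal A$). So (ii) does not read $F(A_\omega)=0$.

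The missing idea is to stay above the gap level throughout and to ask for concavity only where solutions live. The paper fixes $G=\widetilde P+c$ with $c=\max_Xa_0+\delta_{\min}/2$. Then $G$ is ideal G\aa rding and $G\le-\delta_{\min}/2$ on $\partial\mathcal A$. The single operator $F=G^{1/d}$ is concave and elliptic on $\C_G=\mathcal A\cap\{G>0\}$, and $P_x(A)=b$ is equivalent to $F(A)=(b+c-a_0(x))^{1/d}$.

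The path moves in the K\"ahler direction, $\omega_t=\underline\omega+t\chi+\ddc v_t$, with constant right-hand side $b(t)$ fixed by compatibility. Since $b(0)=0$ and $b'>0$, one has $b(t)\ge0$. Hence every solution along the path lies in $\mathcal A\cap\{G\ge\delta_{\min}/2\}\subset\C_G$. Meanwhile $\underline\omega+t\chi$, which is merely in $\mathcal A$, serves as a subsolution. The cost is that the path must be started at large $t$ by an implicit-function perturbation of a Poisson equation, rather than trivially.

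Your openness, $L^\infty$, blow-up and Evans--Krylov steps carry over to that path. Your uniqueness argument, integrating $\partial P_x$ along the segment in the convex set $\mathcal A$, is correct and does not even need concavity.
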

After a variable
shift, the case $d=n$ of Theorem~\ref{main} is Lin's solvability theorem
\cite[Theorem~1.4]{Lin}. For $d<n$, the theorem treats general
$\sigma_k$ equations beyond the previously established Hessian
and Hessian quotient cases
\cite[Corollary~3 and Section~7]{Szekelyhidi}. The proof is independent
of Lin's theorem.

For $P_x=\sigma_k-f(x)$ with $f(x)>0$ and
$2\le k<n$, the admissible domain satisfies
$\Gamma_k\subsetneq\mathcal A\subset\Gamma_{k-1}$, where $\Gamma_k=\{\sigma_1>0,\ldots,\sigma_k>0\}$, so condition
\textup{(i)} is a priori weaker than the existence of a $k$-positive
representative. Under the compatibility hypothesis with $f>0$, the
solution lies in $\Gamma_k$, so the two conditions are in fact
equivalent: a class containing a representative in $\mathcal A$
contains a $k$-positive representative. Compare the existence theory in
$k$-positive classes \cite{Sun17,Szekelyhidi}.

\subsection{A numerical criterion for solvability}\label{intro:numerical}

Chen--Nie--Xu \cite{CNX} characterized, on projective manifolds, the
smooth representatives in the G\aa rding component of a strictly
right-Noetherian polynomial by uniform intersection inequalities on
subvarieties. Fu--Li--Zhang--Zhang \cite{FLZZ} obtained such a
characterization on compact K\"ahler manifolds, in terms of mixed
intersection numbers with one auxiliary K\"ahler class. For a general
polynomial of degree $d<n$ both results stop at a representative in the G\aa rding component of the
polynomial, or of a slight shift of it, which lies in $\mathcal A$, while
a separate existence theorem is required. 
Theorem~\ref{main} provides exactly that.

\begin{corollary}\label{cor:numerical-solvability}
Assume the hypotheses of Theorem~\ref{main}.
The following are equivalent:
\begin{enumerate}[label=\textup{(\roman*)}]
\item There is a K\"ahler class $\kappa$ such that, for every irreducible
analytic subvariety $Y\subset X$ of dimension $k$, including $Y=X$, and
every $n-d\le q\le\min\{k,n-1\}$,
\begin{equation}\label{mixed-numerical}
 \int_Y\kappa^{k-q}\wedge\bigl(\Lambda\wedge e^\alpha\bigr)^{[q]}>0 .
\end{equation}
\item There is a smooth
$\underline\omega\in \alpha$ with
$A_{\underline\omega}(x)\in\mathcal A$ for every $x\in X$.
\item Equation~\eqref{target} has a smooth admissible solution in
$\alpha$.
\end{enumerate}
 When these conditions
hold, there is $\delta_{\mathrm{num}}>0$ such that, for every K\"ahler class
$\kappa$,
\begin{equation}\label{uniform-numerical}
 \int_Y\kappa^{k-q}\wedge\bigl(\Lambda\wedge e^\alpha\bigr)^{[q]}
 \ge\delta_{\mathrm{num}}\int_Y\kappa^{k-q}\wedge [\chi]^q
\end{equation}
for all $Y$ and $q$ as in \textup{(i)}. 
\end{corollary}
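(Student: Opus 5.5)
The equivalence (ii)$\Leftrightarrow$(iii) is exactly Theorem~\ref{main}, since the hypotheses are the same. So the real content is (i)$\Leftrightarrow$(ii) together with the uniform bound \eqref{uniform-numerical}. Both directions of (i)$\Leftrightarrow$(ii) will be taken from the numerical characterizations of Fu--Li--Zhang--Zhang \cite{FLZZ} on compact K\"ahler manifolds. The work is in matching their cone with $\mathcal A$, and their intersection numbers with \eqref{mixed-numerical}.

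\textbf{Step 1: translating the intersection numbers.} Expanding $(\Lambda\wedge e^\alpha)^{[q]}$ with \eqref{Lambda-def} gives
\[
\sum_{j}a_j\frac{[\chi]^{n-j}}{(n-j)!}\,\frac{\alpha^{q-n+j}}{(q-n+j)!}.
\]
Pointwise, and up to a positive constant, this is the $(n-q)$-th $\chi$-trace derivative of $P_x$ evaluated at $\lambda(A_{\omega})$. In other words, it is the polarization of $p_x^{(n-q)}$. The range $n-d\le q\le n-1$ therefore corresponds to the derivatives $p_x^{(m)}$ with $1\le m\le d$, where $m=n-q$.

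\textbf{Step 2: identifying the cone.} By the derived-component description \eqref{domains} and the polarization theorems of \cite{Garding,Ideal}, $\mathcal A=\C^{(1)}_{P_x}$. This cone coincides with the G\aa rding component of the polarization of $p_x'$, which is the right-Noetherian polynomial with roots $\rho_1\ge\cdots$. Because $\rho_1,\dots,\rho_{d-1}$ do not depend on $x$, this polynomial is independent of $x$.

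\textbf{Step 3: (ii)$\Rightarrow$(i).} Take $\underline\omega$ from (ii). Since $X$ is compact and $\mathcal A$ is open, after shrinking we get $A_{\underline\omega}-\epsilon I\in\mathcal A$ for some $\epsilon>0$. Membership in $\mathcal A$ makes every polarized derivative $\partial^m$ with $m\ge1$ strictly positive, hence bounded below by $\delta\chi^{q}$, and these forms are positive in the Lorentzian/G\aa rding sense. For each $Y$ we then integrate
\[
\kappa^{k-q}\wedge(\Lambda\wedge e^{\underline\omega})^{[q]}|_Y
\]
and use Step 1 together with the G\aa rding-type mixed inequality: a positive polarized form wedged against K\"ahler forms is positive on $Y$. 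This gives \eqref{uniform-numerical} for every K\"ahler $\kappa$, with $\delta_{\mathrm{num}}=\delta$. In particular (i) holds, and this also settles the final claim of the corollary.

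\textbf{Step 4: (i)$\Rightarrow$(ii).} Here we invoke the Fu--Li--Zhang--Zhang theorem for the strictly right-Noetherian polynomial given by the shift of $p_x'$. Their hypothesis is exactly positivity of mixed intersection numbers with a single K\"ahler class $\kappa$ over all subvarieties, and this is \eqref{mixed-numerical} after Step 1. Their conclusion is a smooth representative in the G\aa rding component, which is $\mathcal A$ by Step 2.

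\textbf{Main obstacle.} I expect the difficulty to be bookkeeping in Step 4. The conditions in \cite{FLZZ} are stated for one polynomial with constant coefficients. Our $\Lambda$, however, has the $x$-dependent term $a_0$, which enters only at $q=n$, and we have excluded $q=n$ from the range. So we must check that the FLZZ conditions involve exactly the derivatives $m\ge1$, and that the strictness of \eqref{strict} supplies whatever strict-gap assumption they require, possibly after a slight shift $\alpha\mapsto\alpha-\epsilon[\chi]$ that is absorbed using the openness of the strict inequalities.
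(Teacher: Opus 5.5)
\textbf{There is a genuine gap in Step~2, and Step~4 inherits it.} The polarization of $p_x'$ is, up to a positive factor, $\partial_{\one}\widetilde P=\sum_i\partial_i\widetilde P$. Its G\aa rding component contains $\mathcal A=\bigcap_i\C_{\partial_i\widetilde P}$, but in general it is strictly larger.

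Take $\widetilde P=\sigma_2$ on $\mathbb R^3$ with $a_0<0$ constant, so that \eqref{strict} holds. Then $\C_{\partial_{\one}\sigma_2}=\{\sigma_1>0\}$, whereas $\mathcal A=\{\lambda_i+\lambda_j>0 \text{ for all } i\ne j\}$. The point $(10,-1,-1)$ lies in the first set but not in the second. So a representative that \cite{FLZZ} produces for $p_x'$ need not be admissible, and (i)$\Rightarrow$(ii) fails as written.

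The hypotheses do not match either. The mixed classes attached to $p_x'$ are those of $\Lambda\wedge\chi$, namely $[\chi]\wedge(\Lambda\wedge e^\alpha)^{[q-1]}$. A factor $[\chi]$ thus replaces one power of $\kappa$, and the resulting conditions are not those in (i). The same conflation already appears in Step~1: $(\Lambda\wedge e^{\omega})^{[q]}$ is a $(q,q)$-form, and only its trace against $\chi^{n-q}$ is the scalar $\partial_{\one}^{n-q}\widetilde P$.

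\textbf{The missing idea: apply \cite{FLZZ} to the degree-$d$ polynomial itself, and aim for an inclusion into $\mathcal A$ rather than an equality of cones.}
\begin{enumerate}[label=(\arabic*)]
\item Replace $a_0$ by its average $\bar a_0$ and set $\bar P=\widetilde P+\bar a_0$. This removes the $x$-dependence and preserves both compatibility and all classes with $q<n$. It also keeps the strict gap, since $-\bar P(\rho_1\one)=\int_X\delta\,dV_\chi>0$.
\item A direct computation shows that the FLZZ mixed classes of $\bar P$ are positive multiples of $(\Lambda\wedge e^\alpha)^{[q]}$.
\item The top-degree condition holds only with equality, $\int_X\bar P(A_{\omega_0})\,dV_\chi=0$. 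So FLZZ yields $\omega_\tau\in\alpha$ with $A_{\omega_\tau}\in\C_{\bar P+\tau}$ for small $\tau>0$. The shift is in the constant term of the polynomial, not in the class $\alpha$.
\item For $\tau<-\bar P(\rho_1\one)$, the polynomial $\bar P+\tau$ is still strictly right-Noetherian, hence G\aa rding. Therefore $\C_{\bar P+\tau}\subset\bigcap_i\C_{\partial_i\bar P}=\mathcal A$.
\end{enumerate}

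\textbf{A smaller correction to Step~3.} Positivity of a trace would not give positivity of $\int_Y\kappa^{k-q}\wedge(\cdot)$ for arbitrary $Y$ and $\kappa$. What you need is positivity of the form itself. In a $\chi$-unitary frame diagonalizing $\underline\omega$, the coefficients of $(\Lambda\wedge e^{\underline\omega})^{[q]}$ are the individual coordinate derivatives of $\widetilde P$ of order $n-q$. These are positive on $\mathcal A$ by Proposition~\ref{idealinputs}(i). By compactness, $(\Lambda\wedge e^{\underline\omega})^{[q]}-\delta\chi^q$ is strongly positive for some $\delta>0$. Wedging with a K\"ahler form in $\kappa^{k-q}$ and integrating over $Y$ then gives \eqref{uniform-numerical}. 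No mixed G\aa rding inequality is needed.
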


\subsection{From PDE to Lorentzian polynomials}\label{intro:nhr}

A solution also determines Hodge--Riemann pairings and a Lorentzian
intersection polynomial. These conclusions extend to equations
\begin{equation}\label{general-equation}
 \bigl(\Lambda\wedge e^\omega\bigr)^{[n]}=0,
\end{equation}
where $\Lambda=\sum_{k=n-d}^n\Lambda^{[k]}$, $2\le d\le n$, and each
$\Lambda^{[k]}$ is a smooth real $(k,k)$-form, closed for $k<n$.
Let 
\begin{equation}\label{cohomological-polynomial}
 \mathcal I(\gamma)=\int_X\bigl(\Lambda\wedge e^\gamma\bigr)^{[n]},
 \qquad \gamma\in H^{1,1}(X,\mathbb R).
\end{equation}

Lorentzian polynomials were introduced by Br\"and\'en and Huh
\cite{NHRBH} and also by Anari--Oveis Gharan--Vinzant \cite{AOGV}, Anari--Liu--Oveis Gharan--Vinzant \cite{ALOGV2,ALOGV}.

\begin{theorem}\label{mainNHR}
Let $\alpha=[\omega_0]\in H^{1,1}(X,\mathbb R)$. 
Suppose that either
\begin{itemize}
\item 
$\Lambda$ determines a spectral equation satisfying the hypotheses of Theorem~\ref{main} and one of the equivalent conditions in Corollary~\ref{cor:numerical-solvability} holds; or
\item there exists $\omega\in \alpha$ that solves \eqref{general-equation} and satisfies
\ref{H1}--\ref{H2} of Section~\ref{nhr:section}.
\end{itemize}
Then the following hold.
\begin{enumerate}[label=\textup{(\alph*)}]
\item\label{mainNHR-a}
For K\"ahler classes $\beta_1,\ldots,\beta_j$,
$0\le j\le d-2$, the pairing
\begin{equation}\label{intro-NHR}
(\gamma,\eta)\longmapsto
\int_X\gamma\eta\,\beta_1\cdots\beta_j
\bigl(\Lambda\wedge e^\alpha\bigr)^{[n-j-2]}
\end{equation}
has signature $(1,h^{1,1}(X)-1)$ and is negative
definite on
\[
\left\{\gamma\in H^{1,1}(X,\mathbb R):
\int_X\gamma\,\beta_1\cdots\beta_j
\bigl(\Lambda\wedge e^\alpha\bigr)^{[n-j-1]}=0
\right\}.
\]
For $j=0$, the product of the $\beta_i$ is $1$.
\item\label{mainNHR-b}
There exists $\varepsilon_0>0$ such that for every $N\ge1$, nef classes
$\beta_1,\ldots,\beta_N$, and
$0\le\varepsilon<\varepsilon_0$, the degree-$d$
homogeneous polynomial in $(y,\mathbf{x})$, where $\mathbf{x}=(x_1,\ldots,x_N)$,
\begin{equation}\label{intro-master}
\mathcal T_{\alpha,\varepsilon}(y,\mathbf{x})
=y^d\left[\mathcal I\!\left(\alpha+
\frac1y\sum_{i=1}^N x_i\beta_i\right)
+\varepsilon \right]
\end{equation}
is a Lorentzian polynomial.
\end{enumerate}
\end{theorem}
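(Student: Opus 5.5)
The plan is to reduce both alternatives to one pointwise statement about a solution $\omega\in\alpha$ and then pass from pointwise algebra to cohomology. In the spectral case, Corollary~\ref{cor:numerical-solvability} and Theorem~\ref{main} give a smooth admissible $\omega\in\alpha$ with $P_x(A_\omega)=0$. I would then check that such an $\omega$ satisfies \ref{H1}--\ref{H2}. I expect these to say two things. First, at each $x$, the polynomial $t\mapsto(\Lambda\wedge e^{\omega+t})^{[n]}$ on $(1,1)$-forms is ideal G\aa rding with respect to the semipositive cone. Second, $\omega(x)$ lies in its first derived component, with the zero level set cut out transversally. This makes the spectral case a special case of the second alternative, so from now on we work only under \ref{H1}--\ref{H2}.

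\emph{Pointwise step.} Fix $x$ and set $G(\eta)=(\Lambda\wedge e^{\omega(x)+\eta})^{[n]}/\chi^n$, a degree-$d$ polynomial on Hermitian forms with $G(0)=0$. Its $(j+2)$-nd order terms give the pointwise forms
\[
\Phi_j=\beta_1\cdots\beta_j(\Lambda\wedge e^{\omega})^{[n-j-2]}.
\]
By the ideal G\aa rding property, the derivatives $D_{v_1}\cdots D_{v_j}G$ along semipositive $v_i$ are G\aa rding, and $0$ lies in the closure of their components. Since $0$ is in the derived cone, it in fact lies in the open component. Hence the quadratic part of each derivative has Lorentzian signature $(1,N-1)$ on real $(1,1)$-forms; this is the standard fact that the Hessian of a hyperbolic polynomial at a point of its cone is Lorentzian. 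For $j\le d-2$ this gives: the pairing $(\gamma,\eta)\mapsto(\gamma\wedge\eta\wedge\Phi_j)/\chi^n$ is Lorentzian, and it is strictly negative on the kernel of $\gamma\mapsto\gamma\wedge\Phi_{j}'$, where $\Phi_j'=\beta_1\cdots\beta_j(\Lambda\wedge e^\omega)^{[n-j-1]}$ is pointwise positive.

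\emph{Globalization for (a).} Here I would follow the Dinh--Nguyên / Timorin route for mixed Hodge--Riemann relations. For a class $\gamma$ with $\int\gamma\,\Phi_j'=0$, choose the representative solving the linear elliptic equation
\[
\gamma_h\wedge\Phi_j'=c\,\chi^n .
\]
This operator is elliptic because $\Phi_j'>0$ pointwise, and the constant $c$ is forced to be $0$ by the kernel condition. With the $\beta_i$ represented by Kähler forms, the pointwise negativity applies to $\gamma_h$, so
\[
\int\gamma_h^2\Phi_j\le 0,
\]
with equality only if $\gamma_h=0$. Closedness of $\Lambda^{[k]}$ for $k<n$ makes the integrals cohomological. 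Signature $(1,\cdot)$ then follows because $\alpha$-type positive directions exist: evaluating on a Kähler class gives a positive value. Nondegeneracy needs a hard-Lefschetz-type injectivity argument; I would get it from the strict negativity together with $\Phi_j'$ pairing positively. This step is the main obstacle. The delicate points are regularity of $\gamma_h$, and the case $j=d-2$, where $\Phi_j$ is a top-order term that is only semipositive-weighted.

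\emph{Part (b).} I would use the Brändén--Huh criterion: a homogeneous polynomial with nonnegative coefficients and M-convex support is Lorentzian iff every $(d-2)$-fold partial derivative is a quadratic form with at most one positive eigenvalue. The coefficients of $\mathcal T$ in $\mathbf x$ and $y$ are mixed integrals
\[
\int\beta_{i_1}\cdots\beta_{i_m}(\Lambda\wedge e^\alpha)^{[n-m]}.
\]
These are nonnegative by the positivity in (a), which extends to nef classes by a limiting argument, and positive for $m\le d$ up to the top. The $\varepsilon$ term only adds $\varepsilon y^d$ and keeps the $y^d$ coefficient positive. The support is the full simplex, hence M-convex. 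The $(d-2)$-derivatives are exactly the pairings in (a), with $y$-derivatives producing lower $[\cdot]$ components. Their signature follows from (a) for Kähler $\beta_i$ and by continuity for nef ones; the Lorentzian cone is closed. The uniform $\varepsilon_0$ comes from the strictness in the $y^2$ block: the $2\times2$ determinant condition has slack, controlled by $\delta_{\mathrm{num}}$.
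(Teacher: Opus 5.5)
There are two genuine gaps. The reduction of the spectral case to \ref{H1}--\ref{H2}, and your corrector for classes in the kernel (with $c=0$), do match the paper.

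\textbf{First gap: the pointwise input.} You get strict negativity from ``the Hessian of a hyperbolic polynomial at a cone point is Lorentzian.'' This does not apply here.
\begin{itemize}
\item The polynomial $g=\partial^B\mathcal F/(\chi^n/n!)$ is neither homogeneous nor hyperbolic. Concavity of $g^{1/m}$ gives only $\partial^2g\le\frac{m-1}{m}\,\partial g\otimes\partial g/g$, that is, $\partial^2g\le 0$ on $\ker\partial g$, not strict negativity.
\item Even hyperbolic polynomials can have degenerate Hessians at cone points, e.g.\ $x_1x_2$ on $\mathbb R^3$.
\end{itemize}
This is why \ref{H2} includes nondegeneracy of the terminal pairing $\eta\zeta\beta_1\cdots\beta_{d-2}\Lambda^{[n-d]}$; in the spectral case this comes from Dinh--Nguy\^en. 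The paper then propagates it down the ladder $g_k=\partial_\chi^k\partial^B\mathcal F_\varepsilon/(\chi^n/n!)$, $0\le k\le m-2$:
\begin{itemize}
\item Suppose $\partial g_k[v]=\partial^2g_k[v,v]=0$ at $\omega$. Negative semidefiniteness of $\partial^2\log g_k$ forces $\partial^2g_k[v,\cdot]=0$, hence $\partial g_{k+1}[v]=0$.
\item The function $\partial^2\log g_k[v,v]$ is nonpositive and vanishes at $\omega$. Differentiating it in the direction $\chi$ gives $\partial^2g_{k+1}[v,v]=0$.
\item At the quadratic level $k=m-2$, the nondegeneracy in \ref{H2} forces $v=0$.
\end{itemize}
Separately, for $j=0$ the base point is not in the open component: $\mathcal F(\omega)=0$ puts $\omega$ on $\partial\C_G$. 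This is why the paper works with $\mathcal F_\varepsilon$ and uses $\omega\in\C_{P_\varepsilon}$ from \ref{H0solution}. By contrast, the step you call the main obstacle is just linear algebra. Negative definiteness on the hyperplane $\ker L_B$, together with $Q_B([\chi],[\chi])>0$, already forces signature $(1,h^{1,1}(X)-1)$.

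\textbf{Second gap: part (b).} The $(d-2)$-fold derivatives of $\mathcal T_{\alpha,\varepsilon}$ are not the pairings of (a). Each $\partial_y^{m-2}\partial_{\mathbf x}^\nu\mathcal T_{\alpha,\varepsilon}$, with $|\nu|=j$ and $m=d-j$, is a quadratic form in $(y,\mathbf x)$. This holds even when $m=2$ and no $y$-derivative is taken. Its Hessian is
\[
(m-2)!\begin{pmatrix} m(m-1)J&(m-1)b^T\\(m-1)b&Q_\nu\end{pmatrix},
\]
where $J,b,Q_\nu$ are the value, gradient and Hessian at $0$ of $\partial_{\mathbf x}^\nu f$, with $f(\mathbf x)=\mathcal I(\alpha+\sum_ix_i\beta_i)+\varepsilon$. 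When $J>0$, having exactly one positive eigenvalue is equivalent to the Schur-complement bound $J\,Q_\nu\le\frac{m-1}{m}\,bb^T$. This links three consecutive levels and does not follow from (a). For example, $J=1$, $b=(s,0)$ with $0<s<1$, and $Q_\nu=\operatorname{diag}(1,-1)$ satisfy (a) but violate the bound.

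The paper proves this bound as \eqref{nhr:global-root}, using two ingredients:
\begin{itemize}
\item concavity of $g_0^{1/m}$ with its sharp constant $\frac{m-1}{m}$;
\item the corrector \eqref{nhr:corrector} with the nonzero constant $L_B(u)/\int_X\partial^B\mathcal F_\varepsilon(\omega)$, which makes $\partial g_0[\eta_\varphi]$ pointwise proportional to $g_0$.
\end{itemize}
Your corrector with $c=0$ only handles $u\in\ker L_B$. For $\nu=0$ one has $J=\varepsilon$, so $\varepsilon>0$ is genuinely needed. The range $\varepsilon<\varepsilon_0$ comes from \ref{H1}, not from any slack measured by $\delta_{\mathrm{num}}$. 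The cases $\varepsilon=0$ and nef $\beta_i$ then follow from closedness of the Lorentzian class.
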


Part~\ref{mainNHR-b} also gives cone-Lorentzianity in the sense of
Br\"and\'en--Leake \cite[Definition~2.1]{NHRBL}: the homogenization
$y^d(\mathcal I(\alpha+\gamma/y)+\varepsilon)$ is Lorentzian on
the product of the positive half-line and the K\"ahler cone.

In the spectral case, $j=d-2$ recovers the mixed Hodge--Riemann
pairing of Dinh--Nguy\^en \cite[Theorem~A]{NHRDN}.
Section~\ref{nhr:section} proves the theorem. Combined with
Corollary~\ref{cor:numerical-solvability}, it gives the Hodge--Riemann
classes illustrated in Section~\ref{nhr:applications}.

\subsection{History}\label{intro:history}
Yau solved the complex Monge--Amp\`ere equation on
compact K\"ahler manifolds in his proof of the Calabi conjecture
\cite{Yau78}, where solvability holds in every K\"ahler class and rests
on a priori estimates up to the third order. For other Hessian equations, solvability in a cohomology class is governed instead by a
subsolution condition, which is in turn expected to be
equivalent to intersection inequalities on subvarieties, in the spirit of
the Demailly--P\u aun numerical characterization of the K\"ahler cone
\cite{DP}. We recall the part of this history closest to the present
paper.

For the complex Hessian equation in a $k$-positive class, Hou--Ma--Wu
proved the second-order estimate \cite{HMW}, and Dinew--Ko{\l}odziej
completed the existence theory on K\"ahler manifolds through a Liouville
theorem that supplies the gradient bound
\cite[Theorems~0.1--0.3]{DK}. Guan \cite{Guan14} and Sz\'ekelyhidi \cite{Szekelyhidi} systematically studied the subsolution conditions in $C^2$ estimates.  Sz\'ekelyhidi \cite{Szekelyhidi} introduced
$\mathcal C$-subsolutions and proved the a priori estimates for a broad
class of equations on Hermitian manifolds, including the Hessian
quotient equations \cite[Corollary~3 and Section~7]{Szekelyhidi}.
Guan developed a closely related weak subsolution condition independently
\cite{Guan14}; these conditions are equivalent on certain convex cones
\cite{Guan24}. For $L^\infty$ estimates, Sui and Sun \cite{SuiSun}
introduced an extended $\mathcal C$-subsolution, which weakens the classical
condition by bounding only the determinant of the relevant positive
perturbations rather than their eigenvalues. We use its general form, the
$\mathcal D$-subsolution of \cite{Linfty}. Second-order estimates for Hessian-type
equations with a quotient structure and nonnegative lower coefficients
were obtained by Guan--Zhang \cite{GuanZhang}, Li--Ren--Wang
\cite{LRW} and Dong \cite{Dong}.

Donaldson \cite{Don99} introduced the $J$-equation through moment maps,
and X.~X.~Chen \cite{Chen00,Chen04} through the Mabuchi energy.
Weinkove \cite{Wei06} proved convergence of the $J$-flow under
a positivity condition, Song--Weinkove \cite{SW08} showed that a cone
condition on the two K\"ahler forms is necessary and sufficient, and
Fang--Lai--Ma \cite{FLM11} extended this to a family of inverse
$\sigma_k$ flows.  See also related work of Guan--Sun \cite{GuanSun} and Sun
\cite{Sun16}  on Hermitian manifolds.

Lejmi--Sz\'ekelyhidi \cite{LS} conjectured the numerical form of the
cone condition; it was proved on toric manifolds by
Collins--Sz\'ekelyhidi \cite{CS}, in uniform form on K\"ahler manifolds
by G.~Chen \cite{GChen}, and in strict form by Song \cite{Song}. The
semistable case, in which the inequalities degenerate, was first
studied on surfaces by Fang--Lai--Song--Weinkove \cite{FLSW14}. For
recent work on the semistable and unstable $J$-equation and on the
related generalized Monge--Amp\`ere and LYZ equations, see
\cite{DMS26,KSD26,Mur26a,Mur26b,XFu26,Liu26a,Liu26b,Liu26c,Liu26d,
FuZhang26,SSD26,FYZZ26,FMJnull}.

The deformed Hermitian--Yang--Mills equation, or Leung--Yau--Zaslow
equation, arose from mirror symmetry \cite{LYZ}. Collins--Jacob--Yau
proved a priori estimates under a subsolution condition and formulated
the numerical conjecture \cite{CJY}. In the supercritical phase, the
criterion was established in uniform form by G.~Chen \cite{GChen} and
in strict form by Chu--Lee--Takahashi \cite{CLT}; the critical phase was
treated by Fu--Yau--Zhang \cite{FYZ}.

For general inverse $\sigma_k$ equations, Lin's theory of
right-Noetherian polynomials gives the top-degree solvability theorem
recalled above \cite{Lin}, with the level-set convexity of
\cite{LinConvex,LinCorr}, and Datar--Pingali proved a numerical
criterion for generalized Monge--Amp\`ere equations on projective
manifolds \cite{DatarPingali}; see also \cite{Mur24}. In the same spirit, our earlier paper
\cite{FMForms} treated top-degree equations with leading term $\omega^n$
and general closed differential-form coefficients, which are not
spectral, and obtained the analytic and numerical solvability criteria
in that setting.

The past few months have seen a burst of activity. Chen--Nie--Xu \cite{CNX} proved a
Nakai--Moishezon type criterion for strongly strictly right-Noetherian
polynomials of degree $n$ on projective manifolds, and a uniform
criterion in every degree. Fu--Li--Zhang--Zhang \cite{FLZZ} removed
projectivity, using mixed intersection numbers with one auxiliary
K\"ahler class and Tosatti's null-supported current
\cite[Proposition~2.1]{Tos} in place of the algebraic input; Corollary~\ref{cor:numerical-solvability} combines their
Theorem~1.2 with Theorem~\ref{main}. Fu--Zhang--Zhang \cite{FZZ26}
obtained a numerical criterion for the $2$-Hessian equation which is
equivalent to solvability in dimension three. On the variational side,
Chen--Ghosh--Nie \cite{CGN} constructed weak geodesics for generalized
Monge--Amp\`ere equations, and Chen--Ghosh--Wang \cite{CGW} proved that
coercivity is equivalent to solvability for right-Noetherian
polynomials, using the quotient concavity of \cite{Ideal} under
proper-position hypotheses.

The algebraic side of this paper belongs to the study of
Hodge--Riemann and Lorentzian structures on cohomology classes.
The mixed Hodge--Riemann relations are due to Dinh--Nguy\^en
\cite{NHRDN}, and also to Cattani \cite{Cattani} by Hodge-theoretic
methods. Xiao \cite{Xiao21a} derived Hodge-index type inequalities, including a mixed version allowing degenerate directions, from G\aa rding's theory of hyperbolic polynomials and the solvability of complex Hessian equations.
Lorentzian polynomials are due to Br\"and\'en--Huh
\cite{NHRBH} and Anari--Liu--Oveis Gharan--Vinzant \cite{AOGV,ALOGV2,ALOGV}. Hu--Xiao introduced Lorentzian classes, whose mixed Hodge--Riemann pairings with K\"ahler classes have Lorentzian signature \cite{HXHodge}. They also proved that the volume polynomial of nef classes is Lorentzian, with reverse Khovanskii--Teissier inequalities, submodularity of the numerical dimension and Pl\"unnecke--Ruzsa type inequalities as consequences \cite{HXMathAnn}; see also \cite{NHRHX}. Lu and Zheng \cite{LuZheng} study Hodge--Riemann polynomials, whose
evaluations at Chern roots of ample vector bundles on smooth projective
varieties yield classes satisfying Hodge--Riemann relations. Theorem~\ref{mainNHR} builds a compatible hierarchy from the derivatives of one nonhomogeneous equation and couples its different degrees in a single Lorentzian polynomial.

\subsection{Method of proof}\label{intro:method}

Starting from an admissible representative $\underline\omega$, we
consider the continuity path
\[
 P_x(A_{\omega_t})=b(t),\qquad
 \omega_t=\underline\omega+t\chi+dd^cv_t,\qquad t\ge0.
\]
The scalar $b(t)$ is determined by the compatibility condition in the
class $[\omega_0]+t[\chi]$, with $b(0)=0$.
The equation is solvable for sufficiently large $t$, and the given
admissible representative remains a subsolution along the path.
It therefore suffices to obtain a priori estimates, uniform on bounded
parameter intervals, to continue the solutions to $t=0$.

For the $L^\infty$ estimate, the subsolution supplies the determinant
bound needed to apply Fang--Ma--Wu \cite{Linfty}. Related estimates
under subsolution conditions were obtained by Sz\'ekelyhidi
\cite{Szekelyhidi}, Sun \cite{Sun17}, and Sui--Sun \cite{SuiSun}.
For the $C^2$ estimate, the concavity theorem for ideal G\aa rding
polynomials gives a fixed concave elliptic formulation of the equation.
This allows us to use Sz\'ekelyhidi's second-order argument
\cite{Szekelyhidi}, following Hou--Ma--Wu \cite{HMW}, to bound the
complex Hessian in terms of the $C^0$ and gradient bounds.

The $C^1$ estimate then follows by the standard blow-up argument
\cite{Szekelyhidi}, using the Liouville theorem of
Dinew--Ko{\l}odziej \cite{DK}. See also \cite{Liouville}. The resulting Hessian bound keeps the
solutions in a compact subset of the elliptic domain. Uniform
ellipticity and concavity give higher-order estimates, which complete
the continuity argument.

For the Hodge--Riemann and Lorentzian conclusions, we use the pointwise
concavity of the mixed derivatives supplied by the ideal G\aa rding
theory \cite{Ideal}. In the spectral case, the quadratic endpoint is
given by the mixed Hodge--Riemann theorem of Dinh--Nguy\^en
\cite{NHRDN}. A linear elliptic correction of cohomology representatives,
as in Xiao \cite{Xiao21a} and Collins \cite{NHRCollins}, allows us to
integrate the pointwise inequalities. The resulting intersection
inequalities give the Hodge--Riemann relations and verify the Hessian
criterion of Br\"and\'en--Huh \cite{NHRBH} for the Lorentzian homogenization.

\subsection{Open problems}\label{intro:open}
The analytic half of the paper uses the spectral structure, while the
algebraic half needs only \ref{H1}--\ref{H2} of
Section~\ref{nhr:section}. Beyond the spectral case, solvability of ideal G\aa rding elliptic equations  is known only for the equations of \cite{FMForms}, with leading term $\omega^n$ and closed negative lower-degree form coefficients under the structural hypotheses imposed there, treated by convexity along multiplicative paths. Open cases include \eqref{general-equation} with form-valued coefficients and other non-spectral G\aa rding elliptic operators of \cite{Linfty}. In general,
relative $L^\infty$ estimates follow from determinantal ellipticity
\cite{Linfty}. The second-order estimate of Subsection~\ref{C2section}
uses the symmetry and concavity of a spectral operator and the
ordering of its eigenvalue derivatives; extending it beyond the setting of
\cite{FMForms} requires a nonspectral replacement. On the numerical side, the
classes $\bigl(\Lambda\wedge e^\alpha\bigr)^{[q]}$ have natural analogues, but the step
from numerical inequalities to an admissible representative is not
known in this generality.

We also ask whether intersection conditions on subvarieties can
characterize solvability and describe the behavior of solutions when
the numerical inequalities degenerate. The models are $J$-stability and its destabilizing
subvarieties for the $J$-equation
\cite{LS,Song,FLSW14,FMJnull,SSD26,Liu26c}, and the boundary cases
of the supercritical deformed Hermitian--Yang--Mills equation
\cite{CJY,Mur26b,FYZZ26}.

\subsection{Organization}

Section~\ref{inputs} collects the algebraic preliminaries, including the
definition of ideal G\aa rding elliptic operators and their basic properties.
Section~\ref{subsolutionsection} constructs the concave spectral operator
and proves the subsolution inequalities. Section~\ref{solvabilitysection}
establishes a priori estimates along the continuity path and proves
Theorem~\ref{main} and Corollary~\ref{cor:numerical-solvability}.
Section~\ref{nhr:section} proves Theorem~\ref{mainNHR} for general solvable ideal
G\aa rding elliptic equations and verifies the standing hypotheses in
the spectral case, then derives the Lefschetz and Hodge-index
consequences. Section~\ref{nhr:applications} gives two spectral examples.

\subsection*{Use of AI tools}
Multiple AI models were used during the preparation of this paper for
mathematical proofreading, the search for examples, and revision of the
exposition. The mathematical ideas and arguments are those of the
authors, who take full responsibility for the content.

\subsection*{Acknowledgements}
The authors thank Jinyang Wu and Jian Xiao for helpful discussions and Gao Chen for his
interest in our work. 

\section{Preliminaries}\label{inputs}

We collect the algebraic results used in later sections. The G\aa rding terminology is that of our theory
\cite{Garding,Ideal}.

For a polynomial $F$ on a real vector space, its directional derivative
at $A$ in the direction $B$ is
\[
 \partial_BF(A)=\left.\frac{d}{dt}F(A+tB)\right|_{t=0}.
\]
Following \cite{Linfty}, we write $\partial F(A)$ and $\partial^2F(A)$
for the differential and Hessian. Thus
\[
 \partial F(A)[B]=\partial_BF(A),\qquad
 \partial^2F(A)[B,C]=\partial_B\partial_CF(A).
\]
Higher derivatives are written in the same way; $\partial_B^k$ means
$k$ successive derivatives in the direction $B$. For polynomials in
coordinates, $\partial_i$ denotes differentiation in the $i$th coordinate.

When $F$ is a polynomial on Hermitian matrices, we also identify its
differential with a Hermitian matrix through the trace pairing:
\[
 \partial F(A)[B]=\operatorname{tr}\bigl(\partial F(A)B\bigr).
\]

For the analytic estimates, we normalize the volume measure by
\[
 dV_\chi=\frac{\chi^n}{n!V},\qquad V=\int_X\frac{\chi^n}{n!},
\]
so that $\int_XdV_\chi=1$. We use the complex Laplacian
$\Delta_\chi=\chi^{i\bar j}\partial_i\partial_{\bar j}$.

\subsection{G\aa rding polynomials and polarization}

We recall the two recursive definitions from
\cite[Definitions~1.1 and 2.1]{Ideal}.

\begin{definition}[G\aa rding and ideal G\aa rding polynomials]
\label{def:garding-ideal}
For a real nonzero polynomial $G$ in $n$ variables, its \emph{G\aa rding component}
is the connected component $\C_G$ of $\{G>0\}$ satisfying
\[
 \C_G+\mathbb R^n_{>0}\subset\C_G,
\]
when such a component exists. Here $\mathbb R^n_{>0}=\{(x_1,\cdots,x_n)|\,x_i>0\}$.

All nonnegative constant polynomials, including the zero polynomial,
are G\aa rding and ideal G\aa rding. By convention their G\aa rding
components are $\mathbb R^n$; in particular, $\C_0=\mathbb R^n$.
For nonconstant polynomials, define recursively:
\begin{enumerate}[label=\textup{(\roman*)}]
\item $G$ is \emph{G\aa rding} if $\C_G$ exists, every 
partial derivative $\partial_iG$ is G\aa rding, and
\[
 \C_G\subset
 \bigcap_{i}\C_{\partial_iG}.
\]
\item $G$ is \emph{ideal G\aa rding} if it is G\aa rding,
$\C_G$ is convex, and every partial derivative
$\partial_iG$ is ideal G\aa rding.
\end{enumerate}
\end{definition}

For multi-affine polynomials (degree at most one in
each variable), the G\aa rding condition is equivalent to the existence
of $\C_G$, and the ideal condition is equivalent to its existence and
convexity \cite[Remark~1.2]{Ideal}.
For a G\aa rding polynomial $G$ of degree at least two, its
\emph{first derived component} is
$\C_G^{(1)}=\bigcap_{i=1}^n\C_{\partial_iG}$. Then
\cite[Propositions~5.5 and 5.7]{Garding} give
\begin{equation}\label{componentidentity}
 \C_G=\C_G^{(1)}\cap\{G>0\},\qquad
 G\le0\quad\text{on }\partial\C_G^{(1)}.
\end{equation}

The following polarization theorem is \cite[Theorem~8.2]{Ideal}.

\begin{theorem}\label{polarization}
For $G(\lambda)=\sum_{j=0}^d b_j\sigma_j(\lambda)$ with $b_d>0$ and $d\le n$,
the following are equivalent: $s\mapsto G(s\one)$ is right-Noetherian;
$s\mapsto G(s\one)$ is a univariate G\aa rding polynomial; $G$ is
G\aa rding; and $G$ is ideal G\aa rding.
\end{theorem}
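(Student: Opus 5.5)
Since the statement is \cite[Theorem~8.2]{Ideal}, the proof I would give runs around the cycle
\[
\text{ideal G\aa rding}\Rightarrow\text{G\aa rding}\Rightarrow\text{univariate G\aa rding}\Leftrightarrow\text{right-Noetherian}\Rightarrow\text{ideal G\aa rding},
\]
by induction on $d$. Write $g(s)=G(s\one)=\sum_j b_j\binom nj s^j$.

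\emph{Structural identity.} Since $\partial_i\sigma_j(\lambda)=\sigma_{j-1}(\lambda_{\hat i})$, each partial
\[
\partial_iG=\sum_{j\ge1}b_j\sigma_{j-1}(\lambda_{\hat i})
\]
has the same form, of degree $d-1$ in $n-1$ variables. Its diagonal restriction is
\[
\sum_j b_j\binom{n-1}{j-1}s^{j-1}=\tfrac1n g'(s).
\]
So differentiation commutes with diagonal restriction up to the factor $1/n$, and the class of polynomials in the statement is closed under taking partials. This identity drives every step of the induction.

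\emph{The easy implications.}
\begin{enumerate}[label=\textup{(\arabic*)}]
\item Ideal G\aa rding implies G\aa rding by Definition~\ref{def:garding-ideal}.
\item G\aa rding implies univariate G\aa rding. The G\aa rding component $\C_G$ contains $s\one$ for all large $s$. It is connected and stable under adding $\mathbb R^n_{>0}$, so $\C_G\cap\mathbb R\one=(\rho_0,\infty)\one$, where $\rho_0$ is the largest root of $g$. The inclusion $\C_G\subset\C_{\partial_1G}$ restricts on the diagonal to $\C_g\subset\C_{g'}$. Applying induction to $\partial_1G$ repeats this for every derivative of $g$.
\item Univariate G\aa rding is equivalent to right-Noetherian. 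For a univariate polynomial with positive leading coefficient, $\C_g=(\rho_0,\infty)$. The recursive inclusions $\C_{g^{(k)}}\subset\C_{g^{(k+1)}}$ say exactly that each $g^{(k)}$ has a real root and that $\rho_k\ge\rho_{k+1}$, which is the monotone root sequence \cite[Section~9.2]{Garding}.
\end{enumerate}

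\emph{Right-Noetherian implies ideal G\aa rding.} By the structural identity and induction, every $\partial_iG$ is ideal G\aa rding. Hence
\[
\mathcal A=\bigcap_i\C_{\partial_iG}
\]
is convex and stable under $\mathbb R^n_{>0}$, and $\partial_iG>0$ on $\mathcal A$. So $G$ is strictly increasing in every coordinate direction on $\mathcal A$. I would take the candidate component $\C=\mathcal A\cap\{G>0\}$ and show three things.
\begin{enumerate}[label=\textup{(\alph*)}]
\item $\C$ is nonempty and open. It contains $s\one$ for $s>\rho_0\ge\rho_1$.
\item $\C$ is a full connected component of $\{G>0\}$. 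Connectedness follows by moving any point of $\C$ along $\one$, since $G$ increases, into the diagonal tail. For maximality, I would show $G\le0$ on $\partial\mathcal A$: where some $\partial_iG$ vanishes, use the inductive structure together with the root ordering $\rho_0\ge\rho_1$ to see that $G$ cannot stay positive there.
\item $\C$ is convex.
\end{enumerate}
Once these hold, $\C=\C_G$, the inclusion $\C_G\subset\mathcal A$ is built in, and stability under $\mathbb R^n_{>0}$ follows from monotonicity.

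\emph{Main obstacle.} Step (c), convexity of $\{G>0\}\cap\mathcal A$, is the hardest: positivity of the partials alone does not give quasi-concavity. My first attempt would show that $G$ has convex superlevel sets on $\mathcal A$ by restricting to a line $\lambda+t\mu$ with $\lambda,\mu$ chosen suitably in $\mathcal A$. I would then use the inductive concavity of the derived polynomials, in the form of the concavity theorem of \cite{Ideal} for $\partial_iG$, to control the second derivative of $G$ along the segment wherever $G$ is small. Failing that, I would argue by the strictly right-Noetherian perturbation $b_0\mapsto b_0-\varepsilon$ and pass to the limit, in the spirit of Lin's level-set convexity \cite{LinConvex,LinCorr}.
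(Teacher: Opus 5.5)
You correctly establish the arrows ideal G\aa rding $\Rightarrow$ G\aa rding $\Rightarrow$ univariate G\aa rding $\Leftrightarrow$ right-Noetherian. In step (2) you should obtain the existence of a real root of $g$ from the chain of inclusions ending at the linear $g^{(d-1)}$, rather than presuppose it. These arrows are exactly the univariate identification of \cite[Section~9.2]{Garding} that the paper quotes.

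The gap is the return arrow, right-Noetherian $\Rightarrow$ ideal G\aa rding, which is the entire substance of the statement. The paper does not prove it; it imports it as the polarization theorem \cite[Theorems~8.1--8.2]{Ideal}. Your reduction to (a)--(c) is correct bookkeeping. But (b) and (c) together say precisely that $\mathcal A\cap\{G>0\}$ is a convex G\aa rding component, so they restate what has to be proved.

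For the maximality in (b), the ordering $\rho_0\ge\rho_1$ only gives $G(\rho_1\one)=g(\rho_1)\le0$ at the single diagonal point of $\partial\mathcal A$. At a general $\mu\in\partial\C_{\partial_1G}$, multi-affinity tells you only that $G$ is constant along $\mu+\mathbb R e_1$, not the sign of that constant. The inequality $G\le0$ on $\partial\mathcal A$ is a Newton-type inequality for the polarized polynomial; already for $\sigma_k$ it needs Newton's inequality in the remaining $n-1$ variables. ``Use the inductive structure'' does not produce it.

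For (c) you give no argument, and neither fallback would work.
\begin{itemize}
\item Concavity of $(\partial_iG)^{1/(d-1)}$ controls the derived polynomials, not the sign of the second derivative of $G$ along a segment where $G$ is small.
\item What would suffice is, for instance, concavity of $G/\partial_{\one}G$ on $\mathcal A$: its positive set in the convex set $\mathcal A$ would then be convex. But that is Proposition~\ref{idealinputs}\textup{(iii)}, which is stated for polynomials already known to be ideal, so using it here is circular.
\item The perturbation $b_0\mapsto b_0-\varepsilon$ merely reduces to the strictly right-Noetherian case, which carries the same convexity problem.
\item Lin's level-set convexity \cite{LinConvex,LinCorr} concerns only the top degree $d=n$ and is itself a substantial theorem.
\end{itemize}
So your proposal proves only the easy half of the cycle. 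To close it you need a genuine polarization argument that transports the (trivially convex) univariate G\aa rding structure of $g$ to $G=\sum_jb_j\sigma_j$, or else you should cite \cite[Theorems~8.1--8.2]{Ideal} as the paper does. One candidate, though I have not checked that it applies in this setting, is a Grace--Walsh--Szeg\H{o}/symbol-type preservation result of the kind used in the proof of Lemma~\ref{nhr:scalarization}, applied after translating along $\one$ to nonnegative coefficients.
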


Theorem~\ref{polarization} follows from the univariate identification
in \cite[Section~9.2]{Garding} and the polarization result in
\cite[Theorems~8.1--8.2]{Ideal}; it applies to every $d\le n$,
including the top-degree case $d=n$ treated by Lin
\cite{LinConvex,LinCorr}. See also \cite[Remark~9.14]{Garding}.

The component and concavity properties used below are collected from
\cite{Ideal}: part (i) follows from the definitions and
\cite[Theorem~11.1(2),(4)]{Ideal}, part (ii) is \cite[Theorem~12.1]{Ideal}, and
part (iii) is \cite[Theorem~13.3(1)]{Ideal}.

\begin{proposition}\label{idealinputs}
Let $G$ be a nonzero ideal G\aa rding polynomial.
\begin{enumerate}[label=\textup{(\roman*)}]
\item Its G\aa rding component is convex and invariant under
nonnegative coordinate translations. Every nonzero iterated derivative
in nonnegative directions is ideal G\aa rding and is positive on $\C_G$;
its G\aa rding component contains $\C_G$.
\item If $F$ is such a derivative (including $G$ itself) of degree $m\ge1$, then $F^{1/m}$
and $\log F$ are concave on $\C_F$.
\item If $v\ge0$ and $\partial_vG\not\equiv0$, then $G/\partial_vG$ is concave on
$\C_{\partial_vG}$.
\end{enumerate}
\end{proposition}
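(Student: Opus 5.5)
Proposition~\ref{idealinputs} is a collection of earlier results, and my plan is to assemble it from the definitions and the cited theorems of \cite{Ideal}, proving directly only the elementary parts of (i). For (i), I start from the recursive Definition~\ref{def:garding-ideal}. Convexity of $\C_G$ is part of the definition of ideal G\aa rding. Invariance under nonnegative translations comes from $\C_G+\mathbb R^n_{>0}\subset\C_G$ together with the following observation: $\C_G$ is open, so for $v\ge0$ and $x\in\C_G$, a point $x-\epsilon\one$ lies in $\C_G$ for small $\epsilon$, and then $x+v=(x-\epsilon\one)+(v+\epsilon\one)\in\C_G$.

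For iterated derivatives in a nonnegative direction $v$, the first step is to write $\partial_vG=\sum_i v_i\partial_iG$. Each $\partial_iG$ is ideal G\aa rding and positive on $\C_G$, because $\C_G\subset\C_{\partial_iG}$ or else $\partial_iG\equiv0$. Hence $\partial_vG>0$ on $\C_G$ whenever it is nonzero. The harder claim is that a nonnegative combination of ideal G\aa rding derivatives is again ideal G\aa rding, with G\aa rding component containing $\C_G$. This does not follow from the definitions alone, since sums of G\aa rding polynomials need not be G\aa rding. I would quote \cite[Theorem~11.1(2),(4)]{Ideal} for exactly this closure statement, and then iterate over the directions to reach arbitrary iterated derivatives. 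The containment of components is then immediate: $\C_G$ is connected and lies in $\{F>0\}$, and it is stable under adding $\mathbb R^n_{>0}$, so it sits inside the component $\C_F$.

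For (ii) and (iii), I take $F$ to be such a derivative, which is ideal G\aa rding by (i). Concavity of $F^{1/m}$ and of $\log F$ on $\C_F$ is \cite[Theorem~12.1]{Ideal}. The two statements are equivalent once either is known, since $\log F=m\log F^{1/m}$ and a positive homogeneous-degree-one-type concavity gives log-concavity; this is only a remark, as $F$ is nonhomogeneous, so I rely on the citation for both. Quotient concavity $G/\partial_vG$ on $\C_{\partial_vG}$ is \cite[Theorem~13.3(1)]{Ideal}, applied with $\partial_vG\not\equiv0$, which is ideal G\aa rding by (i).

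The only genuine obstacle is the closure of ideal G\aa rding polynomials under nonnegative directional derivatives, which I would not reprove. Everything else is bookkeeping with the definitions.
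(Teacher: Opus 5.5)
Your proposal is correct and follows the paper's route: (i) from the definitions together with \cite[Theorem~11.1(2),(4)]{Ideal}, (ii) from \cite[Theorem~12.1]{Ideal}, and (iii) from \cite[Theorem~13.3(1)]{Ideal}, with your translation-invariance and component-containment arguments supplying the elementary parts of (i). One small correction to your aside in (ii): the two concavity statements are not equivalent. Concavity of the positive function $F^{1/m}$ on the convex set $\C_F$ already implies concavity of $\log F=m\log F^{1/m}$ with no homogeneity needed, and it is only the converse that can fail, so only the concavity of $F^{1/m}$ has to be cited.
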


\subsection{Ideal G\aa rding elliptic operators}\label{subsec:ige}

Let $\mathcal Q(E)$ be the real vector space of Hermitian forms on an
$n$-dimensional complex vector space $E$, with semipositive cone
$\mathcal Q(E)^+=\{D\in\mathcal Q(E):D\ge0\}$.
We use the definition of G\aa rding elliptic operators from
\cite[Definitions~4.4, 4.5 and 4.12]{Linfty}.

\begin{definition}[G\aa rding and ideal G\aa rding elliptic operators]\label{def:ige}
Nonnegative constants, including zero, are G\aa rding elliptic and
ideal, with component $\mathcal Q(E)$.
A nonconstant real polynomial $G$ on $\mathcal Q(E)$ is a
\emph{G\aa rding elliptic operator} if it has a component $\C_G$ of
$\{G>0\}$ satisfying
\[
 \C_G+\mathcal Q(E)^+\subset\C_G,
\]
and every nonzero derivative $\partial_DG$, $D\ge0$, is G\aa rding
elliptic with $\C_G\subset\C_{\partial_DG}$.
We call $\C_G$ its \emph{G\aa rding component}.

Such an operator is \emph{ideal} if the G\aa rding components of all
its nonzero iterated derivatives in semipositive directions are convex,
including $G$ itself.
\end{definition}

\begin{lemma}\label{ige:basic}
Let $G$ be a G\aa rding elliptic operator on $\mathcal Q(E)$.
\begin{enumerate}[label=\textup{(\roman*)}]
\item For $A\in\C_G$ and $D_1,\ldots,D_N\ge0$ with $\sum_iD_i>0$, the
\emph{positive scalarization}
\[
 f(\mathbf{x})=G\left(A+\sum_ix_iD_i\right),\qquad \mathbf{x}\in\mathbb R^N,
\]
is G\aa rding, with component
\[
 \C_f=\left\{\mathbf{x}\in\mathbb R^N:A+\sum_ix_iD_i\in\C_G\right\}.
\]
\item $G$ is ideal if and only if all its positive scalarizations are ideal
G\aa rding polynomials.
\item If $G$ is ideal and $F=\partial_{P_1}\cdots\partial_{P_k}G\not\equiv0$,
where $P_i\ge0$ and $k\ge0$, then $F$ is ideal G\aa rding elliptic with
$\C_G\subset\C_F$. For $m=\deg F\ge1$, both $F^{1/m}$ and $\log F$
are concave on $\C_F$.
\end{enumerate}
\end{lemma}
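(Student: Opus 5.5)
The plan treats the three parts in order, each time reducing to the scalar results of Proposition~\ref{idealinputs} and to the polarization/component facts of \cite{Ideal}.

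\textbf{Part (i).} Fix $A\in\C_G$ and $D_1,\ldots,D_N\ge0$ with $D:=\sum_iD_i>0$, and put $f(\mathbf x)=G(A+\sum_ix_iD_i)$. Then $f(0)=G(A)>0$, so $f$ is nonzero. Take the set $\C_f:=\{\mathbf x: A+\sum x_iD_i\in\C_G\}$. It is the preimage of an open set under an affine map, so it is open. It is connected: any $\mathbf x\in\C_f$ can be joined to $\mathbf x+t\one$ for $t\ge0$, since $\sum x_iD_i+tD$ moves by semipositive increments and so stays in $\C_G$. For $t\gg0$ these points lie in the convex cone $\{\mathbf y: A+\sum y_iD_i\in \C_G\}\supset \mathbf x_0+\mathbb R^N_{\ge0}$, whose elements can be joined to each other through large multiples of $\one$. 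Because $D>0$, for any fixed $\mathbf x$ the point $A+\sum x_iD_i+tD$ eventually lies in $\C_G$. The same semipositive-increment argument shows $\C_f+\mathbb R^N_{>0}\subset\C_f$. Finally, $\C_f$ is a full component of $\{f>0\}$, because its boundary points map to $\partial\C_G$, where $G=0$ by maximality of the component.

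To get the G\aa rding condition for $f$, observe that $\partial_if(\mathbf x)=\partial_{D_i}G(A+\sum x_jD_j)$. If $\partial_{D_i}G\not\equiv0$, then $\partial_{D_i}G$ is G\aa rding elliptic and its component contains $\C_G$. Restricted to the same slice it is again a positive scalarization (based at $A\in\C_G\subset\C_{\partial_{D_i}G}$), with component containing $\C_f$. Inducting on degree then gives that every $\partial_if$ is G\aa rding with $\C_f\subset\C_{\partial_if}$. If $\partial_{D_i}G\equiv0$, then $\partial_if=0$, which is G\aa rding by convention.

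\textbf{Part (ii).} This uses the same induction. Components of scalarizations are slices of components of iterated derivatives of $G$ in directions $D_i$, and slices of convex sets are convex, so ideality of $G$ gives ideality of every $f$. For the converse, one needs convexity of $\C_F$ for every iterated derivative $F=\partial_{P_1}\cdots\partial_{P_k}G$. Given two points $B_0,B_1\in\C_F$, choose $A$ deep in $\C_G$ together with directions $D_1=B_0-A+cI$, $D_2=B_1-A+cI$, and $D_3=I$, with $c$ large so that each $D_i\ge0$. Also include the $P_j$ among the directions. The segment $[B_0,B_1]$ then lies in the image of the scalarization, and $F$ restricts there to an iterated derivative of the ideal G\aa rding polynomial $f$. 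By Proposition~\ref{idealinputs}(i) the component of that derivative is convex, and it equals the slice of $\C_F$, so $[B_0,B_1]\subset\C_F$.

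\textbf{Part (iii).} The fact that $F$ is ideal G\aa rding elliptic with $\C_G\subset\C_F$ follows directly from Definition~\ref{def:ige} by induction on $k$. Concavity of $F^{1/m}$ and $\log F$ is a statement about restrictions to lines. Take a segment in $\C_F$ from $B_0$ to $B_1$ and use the scalarization of $F$ through $B_0$ with directions $B_1-B_0+cI$ and $I$. By (ii) applied to $F$ this is ideal G\aa rding, so Proposition~\ref{idealinputs}(ii) gives concavity on its component, which contains the segment.

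\textbf{Main obstacle.} The delicate step is identifying the component $\C_f$ of the scalarization with the slice of $\C_G$. This requires showing that the slice is connected, and that the slice of $\C_F$ is exactly the component of the scalarized derivative, not just some component of $\{F>0\}$. I would handle this with the $\one$-direction path argument described in part (i).
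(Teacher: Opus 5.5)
Your proof is correct. In parts (ii) and (iii) it uses the paper's mechanism: pull back along a positive scalarization $L$, use (i) to identify the component of the pulled-back derivative with the slice $L^{-1}(\C_F)$, and invoke Proposition~\ref{idealinputs}. The differences are in how this is carried out.

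For (i), the paper simply cites \cite[Theorem~4.7]{Linfty}. You prove it directly. You show the slice is open, connected, upward closed and relatively closed in $\{f>0\}$, and you induct on degree through $\partial_if=(\partial_{D_i}G)\circ L$. This is self-contained and matches the reasoning the paper uses inside the proof of Lemma~\ref{ige:criterion}.

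For the converse in (ii) and for (iii), the paper extends the directions to a semipositive family spanning $\mathcal Q(E)$, so that $L$ is onto. Then $\C_F=L(\C_{F\circ L})$ is a linear image of a convex set, $\deg(F\circ L)=\deg F$ holds automatically, and concavity transfers through an affine right inverse. You instead build, for each pair $B_0,B_1$, a scalarization in which $[B_0,B_1]$ is the image of a segment joining preimages in $L^{-1}(\C_F)$. This is more economical, but it leaves one small point to add in (iii). Proposition~\ref{idealinputs}(ii) gives concavity of $f^{1/\deg f}$ for $f=F\circ L$, not directly of $f^{1/m}$. You should note that $\deg f\le m$. This suffices, since $t\mapsto t^{\deg f/m}$ is concave and nondecreasing. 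With the direction $I$ present one in fact has $\deg f=m$.

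One slip in (i): the set $\{\mathbf y:A+\sum_iy_iD_i\in\C_G\}$ is just $\C_f$. It is not a cone, and it is not known to be convex, since $G$ is not assumed ideal in (i). The convex set your path argument actually needs is the orthant $\mathbf x_0+\mathbb R^N_{\ge0}\subset\C_f$. More simply, any two points of the slice can be joined inside it to their coordinatewise maximum by coordinatewise nondecreasing segments.
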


\begin{proof}
Part (i) is \cite[Theorem~4.7]{Linfty}.

For (ii), write $L(\mathbf{x})=A+\sum_ix_iD_i$. Directional differentiation
commutes with this affine substitution:
\[
 \partial_{x_i}(G\circ L)=(\partial_{D_i}G)\circ L.
\]
For each nonzero iterated derivative $F$ in these directions, part (i) gives
\[
 \C_{F\circ L}=L^{-1}(\C_F).
\]
If $G$ is ideal, all these inverse images are convex, so its
scalarizations are ideal. Conversely, extend the directions of any
iterated derivative $F$ to a semipositive family spanning $\mathcal Q(E)$.
Then $L$ is onto, and convexity of $\C_{F\circ L}$ implies convexity of
$\C_F=L(\C_{F\circ L})$.

For (iii), the definition shows that $F$ is ideal and
$\C_G\subset\C_F$. Apply Proposition~\ref{idealinputs} to a positive
scalarization of $F$ in a spanning family. Part (i) and an affine right
inverse of $L$ transfer the concavity inequalities to $\C_F$.
\end{proof}

\begin{lemma}\label{ige:criterion}
Let $G$ be a nonconstant polynomial on $\mathcal Q(E)$, and suppose
$\{G>0\}$ has a component $\C_G$ invariant under semipositive increments. If
\[
 \mathbf{x}\longmapsto G\left(A_0+\sum_ix_iD_i\right)
\]
is G\aa rding for every Hermitian $A_0$ and every semipositive family
with $\sum_iD_i>0$, then $G$ is G\aa rding elliptic.
\end{lemma}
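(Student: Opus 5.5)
We have to show that $G$ is G\aa rding elliptic. That means $\C_G$ exists, every nonzero derivative $\partial_DG$ with $D\ge0$ is again G\aa rding elliptic, and $\C_G\subset\C_{\partial_DG}$. The first item is a hypothesis. The plan is to check the other two on scalarizations and then induct on the degree of $G$.

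\textbf{Step 1 (containment).} Fix $A\in\C_G$ and $D\ge0$ with $\partial_DG\not\equiv0$. Choose a semipositive family $D_1=D,D_2,\ldots,D_N$ with $\sum_iD_i>0$ that spans $\mathcal Q(E)$; for instance add $\mathrm{Id}$ and the rank-one projections. Consider
\[
f(\mathbf{x})=G\Bigl(A+\sum_ix_iD_i\Bigr).
\]
By hypothesis $f$ is G\aa rding. The set
\[
U=\Bigl\{\mathbf{x}:A+\sum_ix_iD_i\in\C_G\Bigr\}
\]
is open and connected. It is connected because the affine map is onto and the fibers are affine, so $U$ is the preimage of a connected open set under a linear surjection. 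It also contains $\mathbb R^N_{\ge0}$-translates of its points, since $\C_G$ is invariant under semipositive increments. Moreover $U\subset\{f>0\}$. Hence $U$ lies in the unique component of $\{f>0\}$ that is invariant under the positive orthant, so $U\subset\C_f$. The G\aa rding property of $f$ then gives $\C_f\subset\C_{\partial_1f}$, and $\partial_1f=(\partial_DG)\circ L$. So $\partial_DG>0$ at $A$, and the whole image $L(\C_{\partial_1 f})\supset\C_G$ lies in one connected component of $\{\partial_DG>0\}$. Here I use that $L$ is onto, so this image is connected and open.

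\textbf{Step 2 (defining $\C_{\partial_DG}$).} Let $\C_{\partial_DG}$ be the component of $\{\partial_DG>0\}$ containing $\C_G$. I need to show it is invariant under $\mathcal Q(E)^+$. Take a point $B$ in this component and $P\ge0$. Join $B$ to $\C_G$ by a path inside the component. Then scalarize along a spanning family that includes $P$, with base point $A_0=B$. Since $\partial_1$ of a G\aa rding polynomial is G\aa rding, the scalarization of $\partial_DG$ is G\aa rding, so its G\aa rding component is orthant invariant. It remains to identify that component with the preimage of our $\C_{\partial_DG}$. This identification is the main obstacle. I would handle it with the same connectivity argument as in Step 1, now run in the reverse direction. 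The G\aa rding component of the scalarization of $\partial_DG$ contains $\C_f$, hence it meets the preimage of $\C_G$. It is also a connected subset of $\{\partial_DG\circ L>0\}$. Therefore its image under $L$ is contained in $\C_{\partial_DG}$. Its image is orthant invariant in the spanning directions, so $B+tP\in\C_{\partial_DG}$ for $t\ge0$. Applying the same reasoning with $A_0=B$ shows that $B$ itself lies in the image.

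\textbf{Step 3 (induction).} The polynomial $\partial_DG$ is nonconstant, or else it is a positive constant (the positivity coming from Step 1), in which case it is trivially G\aa rding elliptic. Its positive scalarizations from arbitrary base points are partial derivatives of scalarizations of $G$, because we may include $D$ in the family. Partial derivatives of G\aa rding polynomials are G\aa rding, so $\partial_DG$ satisfies the hypotheses of the lemma with the component constructed in Step 2. Induction on the degree then shows that $\partial_DG$ is G\aa rding elliptic, which completes the proof.
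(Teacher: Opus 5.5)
\textbf{Verdict: there is a genuine gap in Step~3.} Your Steps~1 and~2 are sound and agree with the paper's argument. For a spanning semipositive family containing $D$ (and $Q$), $L^{-1}(\C_G)$ is the G\aa rding component of $G\circ L$. The component of $\partial_{x_1}(G\circ L)$ is $L^{-1}(U)$, where $U$ is the component of $\{\partial_DG>0\}$ containing $\C_G$. Orthant invariance of this preimage then gives $U+Q\subset U$.

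\textbf{The gap.} To apply the lemma inductively to $F=\partial_DG$, you must show that $\mathbf{x}\mapsto F(A_0+\sum_jx_jD_j')$ is G\aa rding for \emph{every} base point and \emph{every} semipositive family $D_1',\dots,D_M'$. In particular this must hold for families that do not contain $D$. ``Including $D$ in the family'' changes the polynomial. With $\tilde g(y,\mathbf{x})=G(A_0+yD+\sum_jx_jD_j')$ one has
\[
 F\Bigl(A_0+\sum_jx_jD_j'\Bigr)=\partial_y\tilde g(0,\mathbf{x}).
\]
This is a partial derivative of a G\aa rding polynomial followed by restriction to the coordinate hyperplane $\{y=0\}$. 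It is not a partial derivative of a scalarization of $G$. Restriction to a coordinate hyperplane does not preserve the G\aa rding property in general: $y-1$ is G\aa rding in $(y,x)$, but its restriction to $y=0$ is the negative constant $-1$. So your claim that $F$ ``satisfies the hypotheses of the lemma'' is unproved, and the induction does not close as written.

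\textbf{How the paper closes it.} Since $\sum_jD_j'>0$ and $\C_G$ is invariant, for large $T$ one has $(0,T\one)\in\C_{\tilde g}\subset\C_{\partial_y\tilde g}$, and $\tilde g(y,\mathbf{x}+T\one)$ has nonnegative coefficients. Restriction to $y=0$ is a weak positive linear transform with real stable symbol. Such maps preserve G\aa rding polynomials with nonnegative coefficients \cite[Theorem~10.3]{Garding}. Translating back by $T\one$ gives the required hypothesis for $F$.

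\textbf{An alternative repair.} You can avoid restriction entirely by not re-applying the lemma to $F$. Take an arbitrary iterated derivative $F=\partial_{P_1}\cdots\partial_{P_k}G$ and new directions $P,Q\ge0$. Run your Steps~1--2 with one spanning family at base point $0$ that contains $P_1,\dots,P_k,P,Q$.
\begin{itemize}
\item $F\circ L$ is an iterated partial derivative of the G\aa rding polynomial $G\circ L$, so its component contains $L^{-1}(\C_G)$.
\item Its component is therefore $L^{-1}(U_F)$, where $U_F$ is the component of $\{F>0\}$ containing $\C_G$.
\item The G\aa rding property of $F\circ L$ gives both $U_F+Q\subset U_F$ and $U_F\subset U_{\partial_PF}$.
\end{itemize}
Unwinding the recursive definition by induction on $\deg F$ then proves the lemma. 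This route uses only spanning scalarizations and no restriction theorem.
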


\begin{proof}
We argue by induction on the degree of $G$. If $G$ is linear, then the existence of $\C_G$ implies that $G$ is G\aa rding elliptic. 

Assume $\deg G\geq2$. Let $P,Q\ge0$ with $F=\partial_PG\not\equiv0$. Choose a semipositive
spanning family with $D_1=P$ and $D_2=Q$, and put
$L(\mathbf{x})=\sum_ix_iD_i$ and $g=G\circ L$.
 Then $L^{-1}(\C_G)$ is a connected component of $\{g>0\}$ and is invariant under nonnegative coordinate translations. Hence $\C_g=L^{-1}(\C_G)$. The inclusion $\C_g\subset\C_{\partial_{x_1}g}$ then gives
\[
 \C_g=L^{-1}(\C_G),\qquad
 \C_{\partial_{x_1}g}=L^{-1}(U),\qquad \C_G\subset U,
\]
where $U$ is the component of $\{F>0\}$ containing $\C_G$.
Since $\partial_{x_1}\partial_{x_2}g\geq0$ in $L^{-1}(U)$, $U+Q\subset U$. Since $Q$ is
arbitrary, $U$ is invariant under all semipositive increments.

Next let $A_0$ be Hermitian and $D_1',\ldots,D_M'\ge0$ with
$\sum_jD_j'>0$. 
\[\tilde{g}(y,\mathbf{x}):=G(A_0+yP+\sum_jx_jD_j')\] is G\aa rding by the assumption. Let $T>0$. For large $T$, the point $(0,T\one)$ lies in $\C_{\tilde{g}}\subset\C_{\partial_y\tilde{g}}$ and $\tilde{g} (y,\mathbf x+T\one)$ has non-negative coefficients.
\[
F\left(A_0+\sum_j(x_j+T)D_j'\right)=\partial_y\tilde{g}(0,\mathbf{x}+T\one).
\]
Since the restriction map is a weak positive linear transform with a real stable symbol, by \cite[Theorem~10.3]{Garding}, it preserves G\aa rding polynomials with nonnegative coefficients.
In particular, $\partial_y\tilde{g}(0,\mathbf{x}+T\one)$ is G\aa rding and by translation invariance, so is $\partial_y\tilde{g}(0,\mathbf{x})$. 
Induction on the degree
therefore shows that $F$ is G\aa rding elliptic with
$\C_F=U\supset\C_G$, proving the assertion.
\end{proof}

\subsection{Spectral operators}\label{subsec:spectral} We show that scalar symmetric ideal G\aa rding polynomials lift to spectral ideal G\aa rding elliptic operators. 
\begin{lemma}\label{nhr:scalarization}
Let $q=\sum_{j=0}^d b_j\sigma_j$ be an ideal G\aa rding polynomial on
$\mathbb R^n$, with $b_d>0$ and $1\le d\le n$.
Then $F(A)=q(\lambda(A))$ is an ideal G\aa rding
elliptic operator, with component
\[
 \C_F=\{A:\lambda(A)\in\C_q\}.
\]
For every Hermitian $A_0$ and every semipositive family
$D_1,\ldots,D_N$ with $\sum_iD_i>0$, the polynomial
\[
 f(\mathbf{x})=F\left(A_0+\sum_i x_iD_i\right),
 \qquad \mathbf{x}\in\mathbb R^N,
\]
is ideal G\aa rding and has degree $\min\{d,\operatorname{rank}D_i\}$
in each variable $x_i$.
\end{lemma}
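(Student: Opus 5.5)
We prove the lemma through Lemma~\ref{ige:criterion} and Lemma~\ref{ige:basic}(ii), so that everything reduces to showing that each polynomial $f(\mathbf x)=F(A_0+\sum_ix_iD_i)$ is an ideal G\aa rding polynomial.

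\textbf{The polynomial $F$ and its component.} The functions $\sigma_j(\lambda(A))$ are the sums of principal $j\times j$ minors of $A$. Hence $F$ is a real polynomial on $\mathcal Q(E)$, and it is invariant under conjugation by unitary matrices. Put $\mathcal C=\{A:\lambda(A)\in\C_q\}$. Since $\C_q$ is symmetric and open, $\mathcal C$ is open. It is connected, because each such $A$ is unitarily conjugate to $\operatorname{diag}(\lambda)$ and $U(n)$ is connected. It is also a full component of $\{F>0\}$, since eigenvalues depend continuously on $A$. If $D\ge0$, Weyl's monotonicity gives $\lambda_i(A+D)\ge\lambda_i(A)$ when eigenvalues are listed in decreasing order. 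Because $\C_q$ is invariant under nonnegative translations, $\mathcal C+\mathcal Q(E)^+\subset\mathcal C$.

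\textbf{The scalarizations.} Fix $A_0$ and $D_1,\dots,D_N\ge0$ with $\sum_iD_i>0$. The key input is the hyperbolic structure of the symmetric polynomial. By the polarization theory behind Theorem~\ref{polarization}, $q$ is obtained from its univariate restriction $s\mapsto q(s\one)$ by polarization. This suggests writing
\[
 F(A)=\sum_j b_j\,\mathrm{tr}\,\Lambda^jA,
\]
where $\mathrm{tr}\,\Lambda^jA$ is the $j$-th coefficient of $\det(I+tA)$. It then suffices to show that $\mathbf x\mapsto F(A_0+\sum_ix_iD_i)$ is the image of $q$ under a weak positive linear transform with real stable symbol. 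The symbol here is $\det(zI+A_0+\sum_ix_iD_i)$, which is real stable because the $D_i$ are semipositive and sum to a positive definite matrix. We then apply \cite[Theorem~10.3]{Garding} as in the proof of Lemma~\ref{ige:criterion}, after a translation $T\sum_iD_i$ that makes the coefficients nonnegative. Preservation of the ideal property, meaning convexity of all derived components, should follow from the same theory in \cite{Ideal} (Theorems~8.1--8.2, which use the same symbol mechanism).

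A fallback argument runs as follows. After a unitary change of basis, reduce to a fixed $A_0$. Observe that $\partial_{x_i}$ of the scalarization is the scalarization of $\partial_{D_i}F$. Also, $\partial_DF(A)=\mathrm{tr}(\partial F(A)D)$, and $\partial F(A)$ has the same eigenvectors as $A$ with eigenvalues $\partial_iq(\lambda)$. These two facts allow an induction on degree using Proposition~\ref{idealinputs}.

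\textbf{Identifying $\C_f$, the degrees, and the main obstacle.} Once $f$ is known to be G\aa rding, Lemma~\ref{ige:criterion} shows that $F$ is G\aa rding elliptic. Lemma~\ref{ige:basic}(i) identifies $\C_f$ as the preimage of $\C_F$. Comparing with the previous paragraph gives $\C_F=\mathcal C$. Convexity of all the scalarized components together with Lemma~\ref{ige:basic}(ii) then yields idealness. For the degree in $x_i$, expand $\mathrm{tr}\,\Lambda^j(B+x_iD_i)$. The coefficient of $x_i^m$ is a mixed discriminant containing $m$ copies of $D_i$. It vanishes for $m>\operatorname{rank}D_i$, and for $m\le\min\{d,\operatorname{rank}D_i\}$ the top term $b_d\,\mathrm{tr}\,\Lambda^d$ contributes a nonzero multiple of $\mathrm{tr}\,\Lambda^m D_i$ times $\mathrm{tr}\,\Lambda^{d-m}$ of the complement, for generic $B$. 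The main obstacle is the middle step, namely proving that G\aa rding and ideal properties pass from the eigenvalue polynomial to arbitrary noncommuting semipositive scalarizations. I would attack it first via the real-stable-symbol transfer theorem of \cite{Garding}.
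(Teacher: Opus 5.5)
Your component argument and the overall architecture match the paper's proof. The architecture is: show every scalarization $f$ is ideal G\aa rding by a linear transform with real stable symbol, then apply Lemmas~\ref{ige:criterion} and \ref{ige:basic}\textup{(ii)}, then use a principal-minor expansion for the degrees. However, the step you yourself call the main obstacle is not closed, and the tools you cite do not close it.

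\cite[Theorem~10.3]{Garding}, as used in Lemma~\ref{ige:criterion}, transfers only the G\aa rding property. \cite[Theorems~8.1--8.2]{Ideal} are polarization results on $\mathbb R^n$; they give idealness of $q$ itself, which is a hypothesis, and say nothing about substituting noncommuting $D_i$. Idealness of the scalarizations is exactly the convexity content of the lemma, so this is not a formality. The missing idea is that the symbol is itself ideal G\aa rding:
\begin{itemize}
\item Define $S$ on \emph{all} multi-affine monomials by $S(\prod_{j\in J}\lambda_j)=\det M(\mathbf x)_J$, not only on the $\sigma_j$.
\item Its symbol is $\det(M(\mathbf x)+\operatorname{diag}(\mathbf u))$, with $n$ independent variables $u_j$ rather than a single $z$.
\item Once $M$ has positive definite constant term, this symbol has nonnegative coefficients by the mixed-discriminant expansion and is real stable. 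It is therefore ideal G\aa rding by \cite[Proposition~4.3 and Theorem~1.4(1)]{Ideal}.
\item The ideal symbol criterion \cite[Theorem~10.2]{Ideal} then makes $Sg$ ideal G\aa rding.
\end{itemize}

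You also need nonnegative coefficients on the input side, and your translation $T\sum_iD_i$ does not provide them, since $q$ may have negative coefficients. First replace $q$ by $g(\boldsymbol\lambda)=q(\boldsymbol\lambda+a\one)$ with $a\one\in\C_q$, which has nonnegative coefficients by \cite[Lemma~3.1]{Ideal]}. Correspondingly replace $A_0$ by $A_0-aI$, and only then translate $\mathbf x$ by $\tau$.

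Your fallback does not work either. For $D\notin\mathbb R I$, $\partial_DF$ is not a spectral function. A unitary change of basis cannot diagonalize $A_0$ and all the $D_i$ at once. So Proposition~\ref{idealinputs} cannot be applied to the derivatives.

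For the degrees, your upper bound is fine, but ``nonzero for generic $B$'' does not suffice. $B=A_0+\sum_{j\ne i}x_jD_j$ ranges over a specific affine family, on which the lower terms $b_\ell$, $\ell<d$, could a priori cancel the top one. Also, your factorization of the coefficient of $x_i^m$ as $\sigma_m(D_i)$ times a compressed $\sigma$ holds only at $m=r=\operatorname{rank}D_i$; that is the only power that matters when $r<d$.

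The paper settles this as follows. Set $x_j=\tau$ for $j\ne i$. The coefficient of $x_i^r$ is then
\[
\sigma_r(D_i)\sum_{\ell=r}^d b_\ell\,\sigma_{\ell-r}\Bigl(\bigl(A_0+\tau\textstyle\sum_{j\ne i}D_j\bigr)\big|_{\ker D_i}\Bigr).
\]
Because $\sum_iD_i>0$, the compression of $\sum_{j\ne i}D_j$ to $\ker D_i$ is positive definite. Hence the $\ell=d$ term has positive leading coefficient of degree $d-r$ in $\tau$ and dominates for large $\tau$. When $r\ge d$, the coefficient of $x_i^d$ is the constant $b_d\sigma_d(D_i)>0$.
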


\begin{proof}
The component $\C_q$ is symmetric. Every $A\in\C_F$ can be joined by
unitary conjugation to $\operatorname{diag}\lambda(A)$, so connectedness
of $\C_q$ gives connectedness of $\C_F$. Continuity of the ordered
eigenvalues makes $\C_F$ open and closed in $\{F>0\}$, hence a connected
component. Weyl's inequalities give invariance under semipositive increments.

Fix $A_0$ and the $D_i$. Choose $a\one\in\C_q$. The translate
$g(\boldsymbol{\lambda})=q(\boldsymbol{\lambda}+a\one)$ is ideal
with nonnegative coefficients \cite[Lemma~3.1]{Ideal}. For sufficiently
large $\tau$, the matrix polynomial
\[
 M(\mathbf{x})=A_0-aI+\sum_i(x_i+\tau)D_i
\]
has positive definite constant term. Define a linear map $S$ on
multi-affine monomials by
\[
 S\left(\prod_{j\in J}\lambda_j\right)=\det M(\mathbf{x})_J,
 \qquad J\subseteq\{1,\ldots,n\},
\]
where $M_J$ denotes the principal submatrix indexed by $J$.
Its symbol is
\[
 S\left(\prod_{j=1}^n(\lambda_j+u_j)\right)
 =\det\left(M(\mathbf{x})+\operatorname{diag}(\mathbf{u})\right).
\]
The mixed-discriminant expansion gives nonnegative coefficients.
When all $x_i,u_j$ have positive imaginary parts, the matrix has positive
definite imaginary part, so the symbol is real stable and hence ideal
G\aa rding \cite[Proposition~4.3 and Theorem~1.4(1)]{Ideal}.
The symbol criterion \cite[Theorem~10.2]{Ideal} therefore makes $Sg$
ideal G\aa rding. Since
\[
 (Sg)(\mathbf{x})
 =q\bigl(\lambda(M(\mathbf{x}))+a\one\bigr)
 =F\left(A_0+\sum_i(x_i+\tau)D_i\right),
\]
translation invariance proves the assertion about $f$.
Lemmas~\ref{ige:criterion} and \ref{ige:basic}\textup{(ii)} then show
that $F$ is ideal G\aa rding elliptic.

For the degree, put $r=\operatorname{rank}D_i$. Principal-minor expansion
gives $\deg_{x_i}f\le\min\{d,r\}$. If $r\ge d$, the coefficient of
$x_i^d$ is $b_d\sigma_d(D_i)>0$. If $r<d$, set $x_j=\tau$ for $j\ne i$.
The coefficient of $x_i^r$ is
\[
 \sigma_r(D_i)\sum_{\ell=r}^d b_\ell\,
 \sigma_{\ell-r}\left(
 \left.\left(A_0+\tau\sum_{j\ne i}D_j\right)\right|_{\ker D_i}\right),
\]
where the restriction denotes compression to $\ker D_i$.
The compression of $\sum_{j\ne i}D_j$ is positive definite there.
Thus the term $\ell=d$ has positive leading coefficient in $\tau$ of
degree $d-r$, while the other terms have smaller degree. The displayed
coefficient is therefore nonzero for large $\tau$.
\end{proof}

\section{A concave spectral operator and subsolutions}\label{subsolutionsection}
We work with the spectral polynomial of \eqref{polynomial} under
\eqref{strict}. We construct a fixed concave and elliptic operator $F$ and,
for each $b\ge0$, a positive function $h$ such that, for $A\in\mathcal A$,
\[
 P_x(A)=b\quad\Longleftrightarrow\quad F(A)=h(x).
\]
We also show that every $\underline A\in\mathcal A$ gives a quantitative
subsolution of this equation, in the sense of
Proposition~\ref{raylemma}.

Recall that
\[
 P_x(\lambda)=\widetilde P(\lambda)+a_0(x),\qquad
 \widetilde P(\lambda)=\sum_{j=1}^d a_j\sigma_j(\lambda),
 \qquad
 \mathcal A=\bigcap_{i=1}^n\C_{P_i},\quad P_i=\partial_i\widetilde P.
\]
Its diagonal derivative is
\begin{equation}\label{diagonal-derivative}
 \partial_{\one}\widetilde P
 =\sum_{j=1}^d(n-j+1)a_j\sigma_{j-1}>0
 \qquad\text{on }\mathcal A.
\end{equation}
Let $\rho_1$ be the largest root of $s\mapsto(\partial_{\one}\widetilde P)(s\one)$.
Since $s\mapsto P_x(s\one)$ is increasing for $s>\rho_1$, the strict first
root gap is equivalent to
\begin{equation}\label{scalar-gap}
 \delta(x):=-P_x(\rho_1\one)>0,\qquad
 \delta_{\min}:=\min_X\delta>0.
\end{equation}

\begin{lemma}\label{shiftlemma}
For every $x\in X$ and $b>-\delta(x)$, the polynomial $P_x-b$ is
ideal G\aa rding with first derived component $\mathcal A$, and
\begin{equation}\label{admissible-levels}
 P_x\le-\delta(x)\quad\text{on }\partial\mathcal A,
 \qquad \overline{\C_{P_x-b}}\subset\mathcal A.
\end{equation}
\end{lemma}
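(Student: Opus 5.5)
We reduce everything to the univariate picture and then lift it through Theorem~\ref{polarization} and Proposition~\ref{idealinputs}. The argument has four steps: the shifted polynomial is ideal G\aa rding, its first derived component is $\mathcal A$, $P_x\le-\delta(x)$ on $\partial\mathcal A$, and the closure of $\C_{P_x-b}$ lies in $\mathcal A$.

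\textbf{Step 1: $P_x-b$ is ideal G\aa rding.} Fix $x$ and $b>-\delta(x)$. The shifted polynomial
\[
P_x-b=\widetilde P+(a_0(x)-b)
\]
has the same derivatives of all orders $\ge1$ as $\widetilde P$. So the largest roots $\rho_1\ge\cdots\ge\rho_{d-1}$ of the derivatives of $s\mapsto P_x(s\one)-b$ are unchanged.

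The function $s\mapsto P_x(s\one)-b$ is increasing on $(\rho_1,\infty)$, tends to $+\infty$, and equals $-\delta(x)-b<0$ at $s=\rho_1$. Hence it has a largest real root $\rho_0^b>\rho_1$. The root sequence is therefore monotone, with a strict first gap.

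Theorem~\ref{polarization}, applied with $b_0=a_0(x)-b$, then shows that $P_x-b$ is ideal G\aa rding. Its first derived component is $\bigcap_i\C_{\partial_i(P_x-b)}=\bigcap_i\C_{P_i}=\mathcal A$, because the partial derivatives do not see the constant.

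\textbf{Step 2: $P_x\le-\delta(x)$ on $\partial\mathcal A$.} This is the main step. The key input is \eqref{componentidentity}: for a G\aa rding polynomial $G$ of degree $\ge2$, we have $G\le0$ on $\partial\C_G^{(1)}$. We apply it to $G=P_x-b$ for every $b>-\delta(x)$. All these polynomials share the same first derived component $\mathcal A$, so on $\partial\mathcal A$ we get $P_x\le b$ for every such $b$. Letting $b\downarrow-\delta(x)$ gives $P_x\le-\delta(x)$ on $\partial\mathcal A$.

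If the identity in \cite{Garding} is stated only for nonempty $\C_G$, this is harmless. Here $\C_{P_x-b}$ is nonempty, since it contains $s\one$ for all $s>\rho_0^b$.

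\textbf{Step 3: $\overline{\C_{P_x-b}}\subset\mathcal A$.} By \eqref{componentidentity}, $\C_{P_x-b}=\mathcal A\cap\{P_x>b\}$. Its closure lies in $\overline{\mathcal A}\cap\{P_x\ge b\}$. A point of $\partial\mathcal A$ would satisfy $P_x\le-\delta(x)<b$, which contradicts $P_x\ge b$. So the closure lies in the open set $\mathcal A$.

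The only delicate point is the degree hypothesis in \eqref{componentidentity}. It holds because $d\ge2$.
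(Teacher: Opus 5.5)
Your proposal is correct and follows the paper's proof essentially step for step. The argument has three parts, each matching the paper: the shift leaves the derivatives unchanged and produces a root above $\rho_1$, so Theorem~\ref{polarization} applies with first derived component $\mathcal A$; \eqref{componentidentity} applied to each $P_x-b$, followed by $b\downarrow-\delta(x)$, gives $P_x\le-\delta(x)$ on $\partial\mathcal A$; and the closure $\overline{\C_{P_x-b}}\subset\overline{\mathcal A}\cap\{P_x\ge b\}$ cannot meet $\partial\mathcal A$.
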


\begin{proof}
The diagonal polynomial $s\mapsto P_x(s\one)-b$ is negative at
$\rho_1$ and strictly increasing above it. It therefore has exactly one
root above $\rho_1$, and its positive-order derivatives retain the
monotone root sequence of \eqref{strict}. Theorem~\ref{polarization}
makes $P_x-b$ ideal G\aa rding, with first derived component $\mathcal A$.

By \eqref{componentidentity}, $P_x-b\le0$ on
$\partial\mathcal A$. Letting $b\downarrow-\delta(x)$ gives the first
inequality in \eqref{admissible-levels}. Moreover,
\[
 \overline{\C_{P_x-b}}
 \subset\overline{\mathcal A}\cap\{P_x\ge b\}.
\]
The right-hand side cannot meet $\partial\mathcal A$, where
$P_x\le-\delta(x)<b$. This proves the second assertion.
\end{proof}

Choose
\begin{equation}\label{root-shift}
 \delta_1=\frac{\delta_{\min}}2,\qquad
 c=\max_Xa_0+\delta_1,\qquad
 G=\widetilde P+c.
\end{equation}
Since $G(\rho_1\one)=-\delta_1<0$, Theorem~\ref{polarization} makes
$G$ ideal G\aa rding. Lemma~\ref{shiftlemma} gives
$G\le-\delta_1$ on $\partial\mathcal A$. Put
\begin{equation}\label{eq:root-shift-F}
 F=G^{1/d}\quad\text{on }\C_G=\mathcal A\cap\{G>0\}.
\end{equation}
For $A\in\mathcal A$ and $b\ge0$, the equation $P_x(A)=b$ is equivalent to
\[
 F(A)=h(x),\qquad h(x)=(b+c-a_0(x))^{1/d}\ge\delta_1^{1/d}.
\]

\begin{proposition}\label{rootoperator}
The spectral operator $F$ is concave and strictly elliptic on $\C_G$,
and
\[
 \operatorname{tr}(\partial F(A))\ge\left(\binom nd a_d\right)^{1/d}
 \qquad(A\in\C_G).
\]
\end{proposition}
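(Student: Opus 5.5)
Concavity follows directly from Lemma~\ref{nhr:scalarization} and Lemma~\ref{ige:basic}(iii). By the choice \eqref{root-shift} and Theorem~\ref{polarization}, $G=\sum_{j=1}^d a_j\sigma_j+c$ is an ideal G\aa rding polynomial of the form $\sum_j b_j\sigma_j$ with $b_d=a_d>0$. Lemma~\ref{nhr:scalarization} lifts $G(\lambda(A))$ to an ideal G\aa rding elliptic operator whose component is the spectral lift of $\C_G=\mathcal A\cap\{G>0\}$. Lemma~\ref{ige:basic}(iii), applied with $k=0$ and $m=d$, then shows that $F=G^{1/d}$ is concave on $\C_G$.

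For strict ellipticity, fix $A\in\C_G$ and a nonzero $D\ge0$. We have $\partial_DF=\tfrac1d G^{1/d-1}\partial_DG$, so it suffices to show $\partial_DG(A)>0$. Lemma~\ref{ige:basic}(iii) gives that $\partial_DG$ is G\aa rding elliptic and nonzero (by the degree statement in Lemma~\ref{nhr:scalarization}, $\deg_x G(A+xD)\ge1$), and that $\C_G\subset\C_{\partial_DG}$. Hence $\partial_DG>0$ on $\C_G$. Equivalently, in eigenvalue coordinates, $\partial_iG=P_i>0$ on $\mathcal A$ by \eqref{domains}, and $\partial G(A)$ is a Hermitian matrix with eigenvalues $P_i(\lambda(A))>0$. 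To get a uniform constant we will not use this directly, but it confirms that $\partial F(A)>0$.

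For the trace bound, the plan is to combine concavity with homogeneity in the $\one$ direction via Proposition~\ref{idealinputs}(ii) on the scalarization $t\mapsto G(A+tI)$. Put $g(t)=G(A+tI)$. This is a univariate ideal G\aa rding polynomial of degree $d$ with leading coefficient $\binom nd a_d$, positive for $t\ge0$, and $g^{1/d}$ is concave on its component, which contains $[0,\infty)$. Concavity gives
\[
 \frac{d}{dt}\Big|_{t=0} g^{1/d}\ \ge\ \lim_{t\to\infty}\frac{d}{dt}g(t)^{1/d}=\Bigl(\binom nd a_d\Bigr)^{1/d},
\]
since the derivative of a concave function is nonincreasing and $g(t)^{1/d}\sim(\binom nd a_d)^{1/d}t$ as $t\to\infty$. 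The left-hand side equals $\partial_IF(A)=\operatorname{tr}(\partial F(A))$, which proves the bound.

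The main care point is checking that the whole ray $\{A+tI:t\ge0\}$ lies in $\C_G$, so that concavity applies along it. This follows from invariance of $\C_G$ under semipositive increments (Lemma~\ref{nhr:scalarization}). The leading coefficient of $g$ is $a_d\sigma_d(\one)=\binom nd a_d$, which is immediate.
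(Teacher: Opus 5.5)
Your proof is correct and follows the paper's argument: concavity from Lemmas~\ref{nhr:scalarization} and~\ref{ige:basic}(iii), strict ellipticity from $\partial F=\partial G/(dG^{(d-1)/d})$ with $\partial G>0$ on $\C_G$, and the trace bound from concavity combined with the asymptotic growth $(\binom nd a_d)^{1/d}t$ in the identity direction. The only cosmetic difference is in the trace bound: you use the nonincreasing derivative of $t\mapsto F(A+tI)$, while the paper divides the tangent inequality $F(rI)\le F(A)+\partial F(A)[rI-A]$ by $r$ and lets $r\to\infty$.
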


\begin{proof}
Since $G$ is ideal, Lemmas~\ref{nhr:scalarization} and
\ref{ige:basic}\textup{(iii)} give concavity of $F=G^{1/d}$ on $\C_G$.
The identity
\[
 \partial F(A)=\frac{\partial\widetilde P(A)}{dG(A)^{(d-1)/d}}
\]
gives strict ellipticity. For sufficiently large $r$, concavity gives
\[
 F(rI)\le F(A)+\partial F(A)[rI-A].
\]
Dividing by $r$ and letting $r\to\infty$ proves the trace bound.
\end{proof}

\begin{proposition}\label{raylemma}
Let $K\subset\mathcal A$ be compact and $b_0\ge0$.
There are $\varrho>0$, $L\ge1$ and $\kappa>0$, uniform for
$\underline A\in K$, $x\in X$ and $0\le b\le b_0$, such that
$\underline A-\varrho I\in\mathcal A$ and every Hermitian matrix $A$
satisfying
\[
 P_x(A)=b,\qquad A-\underline A+\varrho I\ge0
\]
obeys
\begin{equation}\label{quantitative-subsolution}
 \|A-\underline A+\varrho I\|<L.
\end{equation}
Moreover, every $A\in\mathcal A$ with $P_x(A)=b$ satisfies either
\begin{equation}\label{alternatives}
 \partial F(A)[\underline A-A]\ge\kappa\operatorname{tr}(\partial F(A)),
 \qquad\text{or}\qquad
 \partial F(A)\ge\kappa\operatorname{tr}(\partial F(A))I.
\end{equation}
The norm is induced by the trace pairing.
\end{proposition}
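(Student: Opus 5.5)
Choose $\varrho$ first by compactness. Since $\mathcal A$ is open and $K$ is compact, there is $\varrho>0$ with $\underline A-2\varrho I\in\mathcal A$ for all $\underline A\in K$. Shrinking $\varrho$ further, continuity of the eigenvalues gives $P_x(\underline A-\varrho I)\le P_x(\underline A)$ up to a uniformly small error. The point that matters is stronger and concerns rays. For $\underline B=\underline A-\varrho I\in\mathcal A$ and any unit semipositive direction $E\ge0$, $\operatorname{tr}E=1$, consider
\[
 t\mapsto P_x(\underline B+tE),\qquad t\ge0.
\]
This is the univariate restriction $f$ of Lemma~\ref{nhr:scalarization}, which is ideal G\aa rding. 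Its degree is $\min\{d,\operatorname{rank}E\}\ge1$, and its leading coefficient is positive.

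To obtain \eqref{quantitative-subsolution}, it suffices to show $P_x(\underline B+tE)>b_0$ for $t\ge L$, uniformly in $E$, $\underline A\in K$ and $x$. The ray stays in $\mathcal A$ because $\mathcal A$ is invariant under semipositive increments. There $t\mapsto P_x(\underline B+tE)$ is strictly increasing: its derivative is $\partial_EP_x>0$ on $\mathcal A$, by the component property of Proposition~\ref{idealinputs}(i) applied to each $\partial_i P_x$. The difficulty is uniformity, since the degree in $t$ drops when $\operatorname{rank}E<d$, and then growth comes only from lower-order terms.

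I would argue by contradiction and compactness. Take sequences $\underline A_k\in K$, $x_k$, $E_k$ and $t_k\to\infty$ with $P_{x_k}(\underline B_k+t_kE_k)\le b_0$. Pass to limits $\underline B$, $x$, $E\ne0$. Monotonicity along rays gives $P_{x}(\underline B+tE)\le b_0$ for every fixed $t$. Now use concavity of $F=G^{1/d}$ from Proposition~\ref{rootoperator}: a concave function bounded above along a ray in $\C_G$ has nonincreasing slope, so boundedness forces $\partial F(\underline B+tE)[E]\to0$ as $t\to\infty$. I then need strict growth, i.e. that $P_x(\underline B+tE)\to\infty$ for every nonzero $E\ge0$. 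Reduce to $E$ a rank-one projection by monotonicity in $E$. In that case $P_x(\underline B+tE)$ is affine in $t$ with slope $\partial_EP_x(\underline B)>0$, which is positive since $\underline B\in\mathcal A$. Hence it tends to infinity, a contradiction. Actually the strict slope at $\underline B$, uniform over compact data, gives this directly without the concavity step. This yields $L$.

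For the dichotomy \eqref{alternatives}, I would follow Sz\'ekelyhidi's argument for $\mathcal C$-subsolutions. Fix $A\in\mathcal A$ with $P_x(A)=b$ and set $N=\partial F(A)/\operatorname{tr}\partial F(A)$, a positive definite matrix of trace one. Suppose $N\ge\kappa I$ fails, so $N$ has an eigenvector $e$ with eigenvalue $<\kappa$. Consider $\underline B+Le e^*$. By the first part, $P_x(\underline B+L ee^*)>b_0\ge b$, so $F(\underline B+Lee^*)>F(A)=h$, where $h$ is the common level (choose $L$ slightly larger if needed). Concavity of $F$ on $\C_G$ gives
\[
 0< F(\underline B+Lee^*)-F(A)\le \partial F(A)[\underline A-\varrho I+Lee^*-A].
\]
The middle expression is at least a uniform positive constant $c_0$, by compactness and monotonicity. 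Dividing by $\operatorname{tr}\partial F(A)$, which is bounded below by Proposition~\ref{rootoperator}, and using $N[ee^*]<\kappa$, I get
\[
 N[\underline A-A]\ge \varrho-L\kappa+c_0/\operatorname{tr}\partial F(A).
\]
Choosing $\kappa\le\varrho/(2L)$ yields $N[\underline A-A]\ge\varrho/2\ge\kappa$.

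This requires $\underline B+Lee^*\in\C_G$, which holds because $G>0$ wherever $P_x\ge b\ge0>-\delta_1$. The main obstacle is the uniform lower bound in the first part for low-rank directions, together with checking that $\underline B$ and the perturbed points lie in $\C_G$ rather than only in $\mathcal A$.
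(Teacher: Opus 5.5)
Your proof is correct. For the bound \eqref{quantitative-subsolution} you end up with the paper's argument. You bound the increment $A-\underline A+\varrho I\ge0$, whose trace is at least its norm, from below by a rank-one matrix along its top eigenvector, and use monotonicity of $P_x$ on $\mathcal A$. Then $P_x$ is affine on rank-one rays with slope $\partial\widetilde P(\underline A-\varrho I)[\zeta\zeta^*]$, which is bounded below by compactness. The contradiction/concavity detour you set up first is superfluous, as you noticed. Also, Lemma~\ref{nhr:scalarization} is stated for families with $\sum_iD_i>0$, so it does not literally cover a single low-rank $E$, though you never need it.

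For the dichotomy the routes differ. The paper invokes Sz\'ekelyhidi's Proposition~6 and asserts that its argument transfers to the non-conical domain $\C_G$. It then disposes of bounded $A$ separately, by compactness in $\C_G$ and strict ellipticity. You instead prove the dichotomy directly at the matrix level. Suppose $\partial F(A)$ has a unit eigenvector $e$ with normalized eigenvalue below $\kappa$. Then the test point $\underline A-\varrho I+Lee^*$ lies in $\C_G$ with $F$ strictly larger than $F(A)$. Concavity then gives $\partial F(A)[\underline A-A]>(\varrho-L\kappa)\operatorname{tr}\partial F(A)$, so $\kappa=\varrho/(2L)$ works.

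This is the mechanism behind Sz\'ekelyhidi's proof, but your version is self-contained and gives an explicit $\kappa$. It also holds for every admissible $A$ on the level set, so it needs neither the bounded/unbounded split nor a re-examination of the cone-setting argument. The paper's version is shorter because it cites.

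Your membership checks are right. The inequality $G\ge P_x+\delta_1$ places both $A$ and the test point in $\C_G=\mathcal A\cap\{G>0\}$, whereas $\underline A-\varrho I$ itself never needs to lie in $\C_G$. The uniform constant $c_0$ you mention is unnecessary, since strict positivity suffices.
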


\begin{proof}
Choose $\varrho>0$ so that $U=\underline A-\varrho I$, for
$\underline A\in K$, stays in a compact subset of $\mathcal A$.
Ellipticity gives $\partial\widetilde P(U)[\zeta\zeta^*]\ge\eta>0$ uniformly for
unit vectors $\zeta$. Since $P_x$ is affine under rank-one increments,
\[
 P_x(U+r\zeta\zeta^*)-b
 =P_x(U)-b+r\,\partial\widetilde P(U)[\zeta\zeta^*].
\]
Choose $L\ge1$ so that the right-hand side is positive for
$r\ge L/\sqrt n$, uniformly for $0\le b\le b_0$.
If $N=A-U\ge0$ and $\|N\|\ge L$, an eigenvector for its largest
eigenvalue gives $N\ge(L/\sqrt n)\zeta\zeta^*$. Thus
\[
 b=P_x(U+N)\ge P_x\bigl(U+(L/\sqrt n)\zeta\zeta^*\bigr)>b,
\]
a contradiction. This proves \eqref{quantitative-subsolution}.

The alternatives are the matrix subsolution dichotomy of
Sz\'ekelyhidi \cite[Proposition~6]{Szekelyhidi}, following Guan
\cite{Guan14}. Its argument uses convexity and ellipticity of the level
sets and the boundedness just proved, so it applies to $\C_G$.
For bounded $A$, the inequalities $G(A)\ge\delta_1$ and
$G\le-\delta_1$ on $\partial\mathcal A$ place the solution matrices in
a compact subset of $\C_G$. Strict ellipticity then gives the second
alternative after decreasing $\kappa$.
\end{proof}

\section{A priori estimates and solvability}\label{solvabilitysection}
We prove Theorem~\ref{main} by the continuity method. Fix an admissible representative
\[
 \underline\omega=\omega_0+dd^c\underline u,\qquad
 \underline A=\chi^{-1}\underline\omega\in\mathcal A.
\]
In the class
$[\omega_0]+t[\chi]$ with $t\ge0$, compatibility determines
\begin{equation}\label{bt}
 b(t)=\frac{1}{V}\int_X \Lambda \wedge e^{\omega_0+t\chi}=\int_XP_x(\underline A+tI)\,dV_\chi.
\end{equation}
For $t\in[0,\infty)$, the continuity path is
\begin{equation}\label{path}
 \begin{aligned}
 P_x(A_t)&=b(t),\qquad A_t=\chi^{-1}\omega_t\in\mathcal A,\\
 \omega_t&=\underline\omega_t+dd^cv_t,\qquad
 \underline\omega_t=\underline\omega+t\chi.
 \end{aligned}
\end{equation}
Write $$\underline A_t=\chi^{-1}\underline\omega_t=\underline A+tI.$$
Compatibility gives $b(0)=0$. Since $\underline A+tI\in\mathcal A$,
\begin{equation}\label{bprime}
b'(t)=\int_X\partial_{\one}\widetilde P(\underline A+tI)\,dV_\chi>0.
\end{equation}
In particular, $b(t)\ge0$.

With $F=G^{1/d}$ as in \eqref{eq:root-shift-F}, this is
\begin{equation}\label{root-path}
 F(A_t)=h_t(x),\qquad
 h_t(x)=\bigl(b(t)+c-a_0(x)\bigr)^{1/d}.
\end{equation}
Since $b(t)+c-a_0(x)\ge\delta_1$, the solution matrices lie in
$\C_G$. Fix $T<\infty$. For $0\le t\le T$,
\begin{equation}\label{rhs-bounds}
 h_t\ge\delta_1^{1/d},\qquad
 \|h_t\|_{C^2}\le C_T.
\end{equation}
The matrices $\underline A_t(x)$ range in a compact subset of
$\mathcal A$. Proposition~\ref{raylemma}, with $b_0=b(T)$, therefore
gives uniform constants $\varrho,L,\kappa$.

\subsection{\texorpdfstring{The $L^\infty$-estimate}{The L-infinity estimate}}
The $L^\infty$ estimate follows from \cite[Theorem~1.1]{Linfty}.
We verify its hypotheses using a trace lower bound on $\mathcal A$
and a determinant bound for semipositive increments.

\begin{proposition}\label{C0prop}
Fix $T$ and let $0\leq t\leq T$. Every solution of \eqref{path} normalized by $\sup_Xv_t=0$ satisfies
\[
 \|v_t\|_{L^\infty}\le C_T.
\]
\end{proposition}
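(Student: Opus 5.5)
The plan is to apply \cite[Theorem~1.1]{Linfty} to the equation $F(A_t)=h_t$ in the form \eqref{root-path}, written relative to the background form $\underline\omega_t$. That theorem gives a relative $L^\infty$ bound for G\aa rding elliptic equations once two quantitative hypotheses hold uniformly for $0\le t\le T$.

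\textbf{Trace lower bound.} We need a lower bound on the trace of the linearization along the solution. This is supplied by Proposition~\ref{rootoperator}: on $\C_G$,
\[
\operatorname{tr}\partial F\ge\bigl(\binom nd a_d\bigr)^{1/d}.
\]
Since $h_t\ge\delta_1^{1/d}$, every solution matrix $A_t$ lies in $\C_G$, so this bound applies.

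\textbf{Determinant bound.} This is the $\mathcal D$-subsolution condition. By Proposition~\ref{raylemma}, applied to the compact set $\{\underline A_t(x):x\in X,\ 0\le t\le T\}\subset\mathcal A$, we get constants $\varrho,L$. Suppose $N\ge0$ and $P_x(\underline A_t-\varrho I+N)=b(t)$, or more generally $P_x(\underline A_t-\varrho I+N)\le b(t)$. Then $\|N\|<L$. The inequality case reduces to the equality case by monotonicity of $P_x$ along semipositive rays inside $\mathcal A$.

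Hence every semipositive increment $N$ with $\underline A_t-\varrho I+N$ not strictly above the level set satisfies
\[
\det N\le (L/n)^n,
\]
by AM--GM. This is exactly the determinant (Sui--Sun type) bound required. The perturbation by $\varrho I$ is the "$\varepsilon\chi$" margin in the definition of $\mathcal D$-subsolution.

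\textbf{Applying the theorem.} With both hypotheses in place, \cite[Theorem~1.1]{Linfty} yields
\[
\sup v_t-\inf v_t\le C,
\]
where $C$ depends on $(X,\chi)$, $T$, the trace constant, $\varrho$, $L$, and $\sup_t\|h_t\|_{C^0}$. The bound on $h_t$ comes from \eqref{rhs-bounds}. With the normalization $\sup_X v_t=0$, this is the claimed bound $\|v_t\|_{L^\infty}\le C_T$.

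\textbf{Main obstacle.} The main difficulty is matching the precise form of the hypotheses of \cite[Theorem~1.1]{Linfty}. Two checks are needed:
\begin{enumerate}[label=\textup{(\alph*)}]
\item that the operator there may be the nonhomogeneous $P_x-b(t)$, or equivalently $F-h_t$, on the G\aa rding component $\C_{P_x-b}\subset\mathcal A$ given by Lemma~\ref{shiftlemma};
\item that the determinant condition is required only on the sublevel set relative to $\underline A_t-\varrho I$.
\end{enumerate}
If that theorem is stated for G\aa rding elliptic operators, we apply it to the ideal G\aa rding elliptic operator $A\mapsto P_x(A)-b(t)$ from Lemma~\ref{nhr:scalarization}. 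Uniformity in $x$ and $t$ then follows from compactness of $X\times[0,T]$.
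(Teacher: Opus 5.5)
\textbf{There is a gap in your first hypothesis.} A lower bound on $\operatorname{tr}\partial F$ is an ingredient of the second-order estimate (Proposition~\ref{C2prop}), not of the $L^\infty$ estimate. Besides the determinant bound, \cite[Theorem~1.1]{Linfty} needs $(G,\mathcal A)$ to be an admissible pair satisfying quasi-subharmonicity. This means admissible functions obey $\Delta_\chi v_t\ge -C_T$. With $\sup_Xv_t=0$ and Green's formula, that inequality gives the $L^1$ control $\int_X(-v_t)\,dV_\chi\le C_T$, which the capacity argument upgrades to a sup bound. Your proposal never produces any such $L^1$ control, and the bound on $\operatorname{tr}\partial F$ cannot replace it. Here the step is not automatic: $\mathcal A$ is not a cone and need not lie in $\Gamma_1$, so one must show that it lies in a half-space $\{\operatorname{tr}A>-C\}$.

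The paper gets this from the ideal G\aa rding structure. The iterated diagonal derivative
\[
 \partial_{\one}^{d-1}\widetilde P=\frac{(n-1)!}{(n-d)!}a_d\sigma_1+\frac{n!}{(n-d+1)!}a_{d-1}
\]
is a positive combination of nonzero iterated coordinate derivatives of $\widetilde P$ of positive order. Each of these is positive on $\mathcal A$ by Proposition~\ref{idealinputs}\textup{(i)}, so $\operatorname{tr}A>-na_{d-1}/((n-d+1)a_d)$ on $\mathcal A$. Since $\underline A_t$ is bounded for $t\le T$, this gives $\Delta_\chi v_t=\operatorname{tr}A_t-\operatorname{tr}\underline A_t\ge -C_T$. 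The paper also cites \cite[Proposition~4.16]{Linfty} for admissibility of $(G,\mathcal A)$.

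With these in place of your trace bound on $\partial F$, the rest of your argument works. Your determinant bound is a valid alternative to the paper's. You take the norm bound $\|N\|<L$ from Proposition~\ref{raylemma} and apply AM--GM. The paper instead uses the increment inequality $G(B+Q)-G(B)\ge a_d\binom nd\det\nolimits_\chi(Q)^{d/n}$ from \cite[Proposition~4.16]{Linfty}. One small correction: with the trace-pairing norm, $\operatorname{tr}N\le\sqrt n\,\|N\|$, so AM--GM gives $\det N\le(L/\sqrt n)^n$ rather than $(L/n)^n$. This does not affect the argument.
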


\begin{proof}
Every nonzero iterated coordinate derivative of $\widetilde P$ of positive order is
positive on $\mathcal A$. Hence
\begin{align}\label{tracebound}
 0<\partial_{\one}^{d-1}\widetilde P
 &=\frac{(n-1)!}{(n-d)!}a_d\sigma_1
   +\frac{n!}{(n-d+1)!}a_{d-1},\notag\\
 \operatorname{tr}A
 &>-\frac{n a_{d-1}}{(n-d+1)a_d}
 \qquad(A\in\mathcal A).
\end{align}
Since $\underline A_t$ is bounded, this gives
\[
 \Delta_\chi v_t=\operatorname{tr}A_t-
     \operatorname{tr}\underline A_t\ge-C_T.
\]
The normalization and Green's formula then yield
$\int_X(-v_t)\,dV_\chi\le C_T$.

For the spectral polynomial $G$, \cite[Proposition~4.16]{Linfty}
shows that $(G,\mathcal A)$ is an admissible pair satisfying the quasi-subharmonicity condition, and gives
\begin{equation}\label{increment}
 G(B+Q)-G(B)\ge a_d\binom nd\det\nolimits_\chi(Q)^{d/n}
 \qquad(B\in\mathcal A,\ Q\ge0).
\end{equation}
Apply this with $B=\underline A_t-\varrho I\in\mathcal A$. Both
$G(B)$ and $h_t^d$ are uniformly bounded, so
\[
 Q\ge0,\quad G(B+Q)\le h_t^d
 \quad\Longrightarrow\quad \det\nolimits_\chi Q\le C_T.
\]
Theorem~1.1 of \cite{Linfty} now applies with auxiliary form
$\varrho\chi$ and constant determinant density, proving the estimate.
\end{proof}

\subsection{The second-order estimate}\label{C2section}
Given the $L^\infty$ bound, we use Sz\'ekelyhidi's
second-order estimate \cite[Proposition~13]{Szekelyhidi} to control
the complex Hessian in terms of the gradient.

\begin{proposition}\label{C2prop}
For every finite $T$, each smooth solution of \eqref{path} satisfies
\begin{equation}\label{C2bound}
 \|dd^cv_t\|_{L^\infty}
 \le C_T\bigl(1+\|\partial v_t\|_{L^\infty}^2\bigr).
\end{equation}
\end{proposition}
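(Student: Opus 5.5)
The plan is to place the equation in Sz\'ekelyhidi's framework \cite[Proposition~13]{Szekelyhidi} and check each structural hypothesis he uses. Write the path equation \eqref{path} in the fixed concave form \eqref{root-path}, $F(A_t)=h_t(x)$, with $F=G^{1/d}$ a symmetric function of the eigenvalues of $\chi^{-1}\omega_t$. Sz\'ekelyhidi's estimate needs four inputs.
\begin{enumerate}[label=\textup{(\alph*)}]
\item A symmetric function $f(\lambda)=G(\lambda)^{1/d}$, concave and elliptic ($\partial_if>0$) on an open convex symmetric cone-like domain $\Gamma$ with $\Gamma+\Gamma_n\subset\Gamma$.
\item A right-hand side satisfying $\sup_{\partial\Gamma}f<\inf h$.
\item A $\mathcal C$-subsolution $\underline u$.
\item A $C^0$ bound.
\end{enumerate}
Item (d) is Proposition~\ref{C0prop}. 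For (a), take $\Gamma=\C_G$. It is convex and invariant under positive increments by Proposition~\ref{idealinputs}(i) and Lemma~\ref{nhr:scalarization}. Concavity and strict ellipticity come from Proposition~\ref{rootoperator}. The trace lower bound $\sum_i\partial_if\ge(\binom nd a_d)^{1/d}$ replaces his condition that $\sum f_i$ be bounded below. For (b), $F$ extends by $0$ to $\partial\C_G$, since $G\le 0$ on $\partial\C_G$ by \eqref{componentidentity} and Lemma~\ref{shiftlemma}. Meanwhile $h_t\ge\delta_1^{1/d}$ by \eqref{rhs-bounds}, so level sets stay a uniform distance from the boundary.

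\textbf{Step 1: the ray condition.} Sz\'ekelyhidi assumes $\Gamma$ is a cone and that $f(\lambda+te_i)\to\infty$ or to something above $\sup h$. In our setting $\C_G$ is not a cone, so I will avoid these assumptions. What his proof actually uses is the dichotomy of his Proposition~6, which is exactly \eqref{alternatives} of Proposition~\ref{raylemma}, with constants uniform for $t\in[0,T]$. The subsolution $\underline\omega_t=\underline\omega+t\chi$ has $\underline A_t$ in a compact subset of $\mathcal A$, so the constants $\varrho,L,\kappa$ are uniform.

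\textbf{Step 2: the maximum principle computation.} Following Hou--Ma--Wu and Sz\'ekelyhidi, I will consider
\[
Q=\log\lambda_1+\phi(|\partial v|^2)+\psi(v),
\]
where $\lambda_1$ is the largest eigenvalue of $A_t$, perturbed to be smooth, and $\phi,\psi$ are his auxiliary functions. Differentiating the equation twice gives $F^{i\bar i}\lambda_{1,1\bar 1}\ge -\partial^2F[\nabla_1A,\nabla_1A]+\nabla_1\nabla_{\bar 1}h-C\lambda_1\sum F^{i\bar i}$. Concavity of $F$ makes the third-order term usable. One also needs the extra concavity term
\[
-F^{i\bar j,k\bar l}\ \ge\ \sum_{p\neq q}\frac{F^{p\bar p}-F^{q\bar q}}{\lambda_q-\lambda_p}|\cdot|^2,
\]
which holds for symmetric concave $f$. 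Together with $\partial_if\ge\partial_jf$ when $\lambda_i\le\lambda_j$, this handles the $\lambda_1$ third-order terms. That ordering holds because $f$ is symmetric and concave on a symmetric convex domain.

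\textbf{Step 3: the case split.} At the maximum point, either the first alternative of \eqref{alternatives} gives $F^{i\bar j}(\underline u-v)_{i\bar j}\ge\kappa\sum F^{i\bar i}$, which absorbs the bad terms through $\psi$; or the second gives $F^{i\bar i}\ge\kappa\sum F$, so the $F^{1\bar1}\lambda_1^2$ terms dominate. Both cases give $\lambda_1\le C(1+\sup|\partial v|^2)$. The \eqref{tracebound} lower bound on $\operatorname{tr}A$ then bounds all eigenvalues, hence $|dd^cv_t|$.

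\textbf{Main obstacle.} I expect the difficulty to lie in Steps 1--2, in confirming that Sz\'ekelyhidi's argument never uses the cone structure or unboundedness of $f$ beyond the dichotomy and the trace bound. If it does, I would instead apply his theorem to $\tilde f$ defined on the cone generated by a translate of $\C_G$. Using the translate $\C_G-a\one\subset\Gamma_n$-invariant set and the $G\le-\delta_1$ boundary margin, this modified function agrees with $f$ near the solution level sets, which lie in a compact subset of $\C_G$ for bounded $A$.
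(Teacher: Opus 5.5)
Your outline follows the paper's route. You write the path as $F(A_t)=h_t$, take concavity, ellipticity and the lower bound on $\operatorname{tr}\partial F$ from Proposition~\ref{rootoperator}, and use the dichotomy \eqref{alternatives} in place of Sz\'ekelyhidi's cone and ray hypotheses. You use $F=0$ on $\partial\C_G$ against $h_t\ge\delta_1^{1/d}$, and Proposition~\ref{C0prop}. You then run his maximum-principle argument and finish with \eqref{tracebound}.

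\textbf{The gap.} The point you list as the ``main obstacle'' is the one thing the paper's proof actually has to check, and you leave it open. In Sz\'ekelyhidi's computation the cone enters through $\Gamma\subset\Gamma_1$: $\sum_i\lambda_i>0$ gives $|\lambda_i|\le(n-1)\lambda_1$. This control of the negative eigenvalues by $\lambda_1$ is used inside the largest-eigenvalue calculation, in two places:
\begin{itemize}
\item It makes the commutator/curvature error in your Step~2 of size $C\lambda_1\sum_iF^{i\bar i}$.
\item It bounds quantities such as $\sum_iF^{i\bar i}|\lambda_i|$ by $C\lambda_1\sum_iF^{i\bar i}$, which is what lets the good term $\kappa\lambda_1^2\sum_iF^{i\bar i}$ win in the second alternative of Step~3.
\end{itemize}
Here $\C_G$ need not lie in $\Gamma_1$, because \eqref{tracebound} only bounds $\operatorname{tr}A$ below by a constant, and that constant is negative when $a_{d-1}>0$.

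\textbf{The fix.} Use \eqref{tracebound} already inside the computation, as the paper does. It gives $\lambda_n\ge-C-(n-1)\lambda_1$, hence $|\lambda_i|\le C\lambda_1$ once $\lambda_1\ge1$; if $\lambda_1<1$ at the maximum point there is nothing to prove. With this, Sz\'ekelyhidi's inequalities go through with changed constants. You invoke \eqref{tracebound} only at the very end, to pass from $\lambda_1$ to $\|dd^cv_t\|$.

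\textbf{Your fallback does not repair this.} The level sets $\{F=h_t\}$ are unbounded. The solution matrices are known to lie in a compact subset of $\C_G$ only after the $C^2$ bound; this is exactly how Proposition~\ref{smoothprop} uses it. So an extension $\tilde f$ on a cone that agrees with $f$ only near a bounded piece of the level set says nothing about the unknown solution, and the argument is circular. You would need $\tilde f=f$ on the whole region $\{F\ge\delta_1^{1/d}\}$ while keeping concavity, ellipticity and the cone hypotheses, and that is not established.

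Relatedly, your remark in (b) that the level sets stay a uniform distance from $\partial\C_G$ is false in general. For $G=\lambda_1\lambda_2-c$ with $c>0$ (the case $d=n=2$), the hyperbolas $G=\mathrm{const}$ are asymptotic to one another. It is harmless only because nothing in your argument uses it.
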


\begin{proof}
For \eqref{root-path}, Proposition~\ref{rootoperator} gives
concavity, ellipticity and
\[
 \operatorname{tr}\partial F(A)\ge\left(\binom nd a_d\right)^{1/d}>0.
\]
Moreover, $F$ vanishes on $\partial\C_G$, while $h_t\ge\delta_1^{1/d}$.
Proposition~\ref{raylemma} gives the subsolution bound
\eqref{quantitative-subsolution} and the alternatives
\eqref{alternatives}. Taking matrices diagonal in an eigenbasis of
$\underline A_t$ gives the eigenvalue subsolution condition used in
that proof. The bounds for $h_t$ and $\operatorname{osc}_Xv_t$ follow
from \eqref{rhs-bounds} and Proposition~\ref{C0prop}.

The domain $\C_G$ need not be a cone. In the second-order calculation,
the cone condition is used to control the negative eigenvalues by the
largest one. Here, if $\lambda_1\ge\cdots\ge\lambda_n$ are the
eigenvalues of $A_t$, \eqref{tracebound} gives
\[
 \lambda_n\ge-C-(n-1)\lambda_1,\qquad
 |\lambda_i|\le C\lambda_1\quad\text{when }\lambda_1\ge1.
\]
This replaces the trace positivity in Sz\'ekelyhidi's calculation
of the largest eigenvalue; the resulting inequalities differ only
by fixed constants. His maximum-principle argument therefore yields
\[
 \sup_X\lambda_{\max}(A_t)
 \le C_T\bigl(1+\|\partial v_t\|_{L^\infty}^2\bigr).
\]
Using \eqref{tracebound} once more and subtracting
$\underline\omega_t$ proves \eqref{C2bound}.
\end{proof}

\subsection{Gradient and higher-order estimates}
To close \eqref{C2bound}, we bound the gradient by a blow-up argument.
We then verify uniform ellipticity and obtain higher regularity.

\begin{proposition}\label{smoothprop}
For every finite $T$ and every integer $m\ge0$, the solutions of
\eqref{path} with $0\le t\le T$, normalized by either supremum zero
or integral zero, satisfy
\[
 \|v_t\|_{C^m}\le C_{m,T}.
\]
\end{proposition}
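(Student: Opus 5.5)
The plan is to close the estimate \eqref{C2bound} with a gradient bound obtained by contradiction and blow-up, following \cite[Section~5]{Szekelyhidi} and \cite{DK}. Suppose there are $t_k\in[0,T]$ and solutions $v_k=v_{t_k}$ with $M_k=\sup_X|\partial v_k|_\chi\to\infty$, attained at $x_k$. Rescale in holomorphic normal coordinates of $\chi$ centered at $x_k$, setting $w_k(z)=v_k(x_k+z/M_k)$ on balls of radius $\sim M_k$. By Proposition~\ref{C0prop} the $w_k$ are uniformly bounded. By Proposition~\ref{C2prop}, $|dd^cw_k|\le C_T M_k^{-2}(1+M_k^2)\le C$. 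Moreover $|\partial w_k(0)|=1$ and $|\partial w_k|\le1$. Elliptic $C^{1,\beta}$ bounds for functions with bounded Laplacian then give a subsequence converging in $C^{1,\beta}_{\mathrm{loc}}(\mathbb C^n)$ to a bounded $w$ with $|\partial w(0)|=1$ and $dd^cw\in L^\infty$.

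Next we identify the limiting equation. The rescaled matrices $M_k^{-2}(\underline A_{t_k}+\chi^{-1}dd^cv_k)$ converge weakly to $dd^cw$, because $\underline A_{t_k}M_k^{-2}\to0$. At scale $M_k^{-2}$ the operator is dominated by its top homogeneous part $a_d\sigma_d$. The lower-order terms $a_j\sigma_j$ carry factors $M_k^{2(j-d)}$, and $h_{t_k}$ is bounded. So the admissibility condition $A\in\mathcal A$ and the equation should pass to the limit as: $dd^cw$ lies in $\overline{\Gamma_{d}}$ (in the viscosity/pluripotential sense, $w$ is $d$-subharmonic, using $\mathcal A\subset\Gamma_{d-1}$ asymptotically and $\sigma_d(\lambda/M^2)\to0$), and $\sigma_d(dd^cw)=0$ in the pluripotential sense. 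For this step I would use that the recession cone of $\mathcal A$ (and of $\C_G$) is the G\aa rding cone $\Gamma_d$ of the top term. Its asymptotic convexity and the concavity of $F$ from Proposition~\ref{rootoperator} allow the viscosity argument of \cite[Proposition~19]{Szekelyhidi}: upper and lower test functions pass to the limit because level sets $\{F=h\}$ rescaled by $M_k^{-2}$ converge to $\partial\Gamma_d$.

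The Liouville theorem of Dinew--Ko{\l}odziej \cite{DK} (see also \cite{Liouville}) states that a bounded $d$-subharmonic maximal function on $\mathbb C^n$ with bounded Hessian is constant. This contradicts $|\partial w(0)|=1$, so $\sup_X|\partial v_t|\le C_T$. Then \eqref{C2bound} bounds $dd^cv_t$. Hence $A_t$ is bounded, and since $G(A_t)=h_t^d\ge\delta_1$ while $G\le-\delta_1$ on $\partial\mathcal A$ (Lemma~\ref{shiftlemma}), the $A_t$ lie in a fixed compact subset of $\C_G$. On such a set $F$ is uniformly elliptic by Proposition~\ref{rootoperator} and concave. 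Evans--Krylov, in the complex form of \cite{Szekelyhidi} or Tosatti--Wang--Weinkove--Yang, gives $C^{2,\beta}$. Differentiating the equation and applying Schauder theory bootstraps all $C^m$ norms. The two normalizations differ by a constant bounded via Proposition~\ref{C0prop}.

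The main obstacle is the limiting step. Since $\C_G$ is not a cone and the equation is nonhomogeneous, I must verify carefully that the blow-up limit is a maximal $\Gamma_d$-subharmonic function rather than a solution to a degenerate equation with a drift. The first thing I would try is to show directly that $\lim_{R\to\infty}R^{-1}\mathcal A=\overline{\Gamma_d}$, using the polarization structure in Theorem~\ref{polarization}.
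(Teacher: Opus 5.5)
Your outline matches the paper's proof: blow-up at the gradient maximum using Propositions~\ref{C0prop} and~\ref{C2prop}, a Liouville theorem for the limit, then confinement of $A_t$ to a compact subset of $\C_G$, complex Evans--Krylov, and Schauder. The gap is in the step you flag as the main obstacle.

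Your assertion that the recession cone of $\mathcal A$ is $\Gamma_d$ is false for every $2\le d<n$. So is the lemma $\lim_{R\to\infty}R^{-1}\mathcal A=\overline{\Gamma_d}$ that you plan to prove first. For $P_x=\sigma_d-f$, the set $\mathcal A=\bigcap_i\C_{\partial_i\sigma_d}$ is itself a cone, and the paper notes after Theorem~\ref{main} that $\Gamma_d\subsetneq\mathcal A$. Concretely, take $n=3$, $d=2$ and $\lambda=(1,1,-\tfrac9{10})$. The linear forms $\partial_i\sigma_2(\lambda)=\sigma_1(\lambda)-\lambda_i$ take the values $\tfrac1{10},\tfrac1{10},2$, so $R\lambda\in\mathcal A$ for all $R>0$, while $\sigma_2(\lambda)=-\tfrac45$. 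For general $\widetilde P$, $R^{-1}\mathcal A$ converges to the closure of the first derived cone $\bigcap_i\C_{\partial_i\sigma_d}$ of the top term, which strictly contains $\Gamma_d$. Admissibility alone therefore only places $\lambda(dd^cw)$ in this larger cone. That does not give $d$-subharmonicity, so the Liouville theorem cannot be invoked.

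The fix is what the paper means by ``the positive diagonal derivative inequalities select the $\Gamma_d$ branch'': you must combine admissibility with the equation.
\begin{itemize}
\item On $\mathcal A$ one has $\partial_\one^k\widetilde P>0$ for $1\le k\le d-1$. The top homogeneous part of $\partial_\one^k\widetilde P$ is a positive multiple of $\sigma_{d-k}$, so rescaling by $M_k^{-2}$ gives $\sigma_1,\ldots,\sigma_{d-1}\ge0$ in the limit.
\item The sign of $\sigma_d$ comes from $G>0$, i.e.\ from working in $\C_G$. The asymptotic cone of $\C_G$, not of $\mathcal A$, is $\overline{\Gamma_d}$. The level-set equation then forces $\sigma_d\to0$.
\item In the viscosity argument, a test function touching from above gives a matrix $\ge A_k$, hence in $\C_G$. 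Its rescaled limit therefore lies in $\{\sigma_1,\ldots,\sigma_d\ge0\}=\overline{\Gamma_d}$.
\item Suppose a test function touches from below with $dd^c\phi(z_0)\in\Gamma_d$. For large $k$ this gives $B_k\le A_k$ with $B_k\in\mathcal A$ and $P_x(B_k)>b(t_k)$, because the top term dominates on $M_k^2$ times a compact subset of $\Gamma_d$. This contradicts the monotonicity of $P_x$ under semipositive increments in $\mathcal A$.
\end{itemize}
Together these give $\lambda(dd^cw)\in\partial\Gamma_d$ in the viscosity sense. With this in place of your proposed lemma, the rest of your argument goes through as in the paper.
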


\begin{proof}
For the gradient bound, we use the standard blow-up argument of
Sz\'ekelyhidi \cite[Section~6]{Szekelyhidi}. If the bound fails, choose normalized
solutions $v_j$ and rescale at their gradient maximum points, in
holomorphic charts unitary at their centers:
\[
 r_j=\sup_X|\partial v_j|_\chi\longrightarrow\infty,
 \qquad w_j(z)=v_j(z/r_j).
\]
Propositions~\ref{C0prop} and~\ref{C2prop} give a subsequence with
\[
 w_j\longrightarrow w\text{ in }C^{1,\alpha}_{\mathrm{loc}}(\mathbb C^n),
 \qquad |\partial w(0)|=1,
\]
where $w$ is bounded and globally Lipschitz. After rescaling, the
background and lower-degree terms vanish, leaving $a_d\sigma_d$.
The positive diagonal derivative inequalities on the admissible domain
select the $\Gamma_d$ branch and give
\begin{equation}\label{limitPDE}
 \lambda(dd^cw)\in\partial\Gamma_d,\qquad
 \Gamma_d=\{\sigma_1>0,\ldots,\sigma_d>0\},
\end{equation} in the viscosity sense.
For $d<n$, the Dinew--Ko{\l}odziej Liouville theorem
\cite[Theorem~0.1]{DK} makes $w$ constant. For $d=n$, bounded
plurisubharmonic functions on $\mathbb C^n$ are constant.
Both contradict $|\partial w(0)|=1$.

The gradient bound and \eqref{C2bound} give $\|A_t\|\le C_T$.
Since $G(A_t)\ge\delta_1$ and $G\le-\delta_1$ on
$\partial\mathcal A$, the solution matrices lie in
\[
 \{A\in\overline{\mathcal A}:\|A\|\le C_T,\ G(A)\ge\delta_1\}
 \Subset\C_G.
\]
Strict ellipticity of $F$ therefore gives uniform ellipticity near
all solution matrices. Using the complex Evans--Krylov theorem in \cite[Theorem~3.4 and Section~4]{TWWY},
we obtain a uniform $C^{2,\alpha}$ bound from \eqref{rhs-bounds}. Standard Schauder estimates give all higher bounds. 
\end{proof}
\begin{remark}
In our previous joint work with Wu \cite{Liouville}, we obtained a full geometric characterization of Liouville rigidity for degenerate real and complex Hessian equations. In particular, the result applies to all spectral cases and provides an alternative to the Liouville theorem used above.
\end{remark}

\subsection{The continuity method}\label{continuitysection}
The estimates above give closedness on finite parameter intervals.
To start the path, we solve a perturbation of a Poisson equation
for large $t$.

\begin{proposition}\label{startprop}
Equation \eqref{path} has a smooth solution for every sufficiently
large $t$. Those solutions satisfy $\omega_t>0$.
\end{proposition}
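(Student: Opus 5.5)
For large $t$ I will write the solution as a small perturbation of the background form $\underline\omega_t=\underline\omega+t\chi$ and solve by the implicit function theorem after rescaling. Put $s=1/t$ and $\omega_t=t\bigl(\chi+s\underline\omega+dd^c\phi\bigr)$, so that $v_t=t\phi$. Expanding $P_x(tI+B)=\sum_j a_j\sigma_j(tI+B)$ in $t$, the leading term is $a_d\binom nd t^d$. The next term is $t^{d-1}$ times a linear combination of $\operatorname{tr}B$ and $a_{d-1}$. After dividing by $t^d$ and writing $B=\underline A+t\,\chi^{-1}dd^c\phi$, the equation $P_x(A_t)=b(t)$ takes the form
\[
 \Phi(s,\phi):=s^{d}\Bigl[P_x\bigl(s^{-1}(I+s\underline A+\chi^{-1}dd^c\phi)\bigr)-b(1/s)\Bigr]=0 .
\]
Here $\Phi$ is a polynomial in $s$ and in the entries of $\chi^{-1}dd^c\phi$. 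By \eqref{bt}, the subtracted term $s^db(1/s)$ is the average of the same polynomial with $\phi=0$, so $\Phi$ extends smoothly to $s\in(-s_0,s_0)$. At $s=0$ it reduces to $a_d\bigl(\sigma_d(I+\chi^{-1}dd^c\phi)-\binom nd\bigr)$ up to integral normalization.

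I would work with the map $\Phi:(-s_0,s_0)\times C^{2,\alpha}_0\to C^{0,\alpha}_0$. Here $C^{k,\alpha}_0$ denotes functions with $\int_X\cdot\,dV_\chi=0$ on the domain side, and on the target side I subtract averages. At $(0,0)$ the function $\phi=0$ solves the equation. The linearization there is $a_d\binom{n-1}{d-1}\Delta_\chi$, which is an isomorphism between mean-zero Hölder spaces.

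The one point needing care is the integral constraint. I must check that $\Phi(s,\phi)$ automatically has zero $dV_\chi$-integral. This holds because $P_x(A)\,\chi^n/n!$ equals $(\Lambda\wedge e^{\omega})^{[n]}$, whose integral depends only on the cohomology class. Subtracting $b(t)$ therefore gives a map into mean-zero functions, which is exactly the role of compatibility in \eqref{bt}.

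The implicit function theorem then yields $\phi_s$ for small $s>0$, with $\|\phi_s\|_{C^{2,\alpha}}\to0$ as $s\to0$, and Schauder bootstrapping makes $\phi_s$ smooth. Set $v_t=t\phi_{1/t}$. Then $\chi^{-1}\omega_t/t=I+s\underline A+\chi^{-1}dd^c\phi_s$ is close to $I$ in $C^0$, so $\omega_t>0$.

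It remains to check admissibility, i.e. $A_t\in\mathcal A$. The matrix $A_t$ is $t$ times a matrix close to $I$, so it is positive definite and large. Admissibility follows because $\mathcal A$ contains $\rho\one+\mathbb R^n_{>0}$ for $\rho$ beyond $\rho_1$, and is invariant under semipositive increments. Since $A_t-rI\ge0$ with $r\approx t$, large $t$ suffices.

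The main obstacle is the bookkeeping showing that $\Phi$ is smooth at $s=0$ with the stated linearization, and that the mean-zero compatibility is exact rather than only approximate. The rest of the argument is standard.
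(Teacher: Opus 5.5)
Your proposal is correct and follows essentially the paper's argument: the implicit function theorem at $\tau=1/t=0$ on mean-zero H\"older spaces. In both, the mean-zero property comes exactly from the cohomological invariance of $\int_X(\Lambda\wedge e^{\omega})^{[n]}$, and the linearization is $\binom{n-1}{d-1}a_d\Delta_\chi$. The only difference is the normalization: you rescale $v_t=t\phi$ and multiply by $s^d$, so the limit is the nonlinear $\sigma_d$-equation with trivial solution $\phi=0$, while the paper keeps $v$ unscaled and multiplies by $\tau^{d-1}$, so the limit is a linear Poisson equation; the two maps are related by $\mathcal G(\tau,v)=\tau^{-1}\Phi(\tau,\tau v)$.
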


\begin{proof} For $k\geq0,\alpha\in(0,1)$, let \[
 C^{k,\alpha}_0(X):=\left\{v\in C^{k,\alpha}(X,\mathbb R)\,|\ \int_XvdV_\chi=0\right\}.\]
Set $\tau=t^{-1}$. For $v\in C^{2,\alpha}_0(X)$, write $\underline{\omega}_v:=\underline \omega+\ddc v$ and define
\[
 \mathcal G(\tau,v)=\tau^{d-1}
 \bigl[P_x(\tau^{-1}I+A_{\underline{\omega}_v})-b(\tau^{-1})\bigr].
\]
Since $$\frac{(\Lambda\wedge e^{\underline{\omega}_v+t\chi})^{[n]}}{\chi^n/n!}= 
\binom nd a_{d}t^d+O(t^{d-1}),\qquad b(t)=\binom nd  a_dt^d+O(t^{d-1}),$$
the expansion 
shows that the leading $t^d$ terms in $P_x(tI+A_{\underline{\omega}_v})-b(t)$ cancel. Thus $\mathcal G$ extends smoothly to $\tau=0$, with
\begin{equation}\label{Poissonlimit}
 \mathcal G(0,v)=\binom{n-1}{d-1}a_d
 \left(\Delta_\chi v+\operatorname{tr}_\chi\underline\omega
       -\int_X\operatorname{tr}_\chi\underline\omega\,dV_\chi\right).
\end{equation}
The equation $\mathcal G(0,v)=0$ has a unique smooth
solution $v_*$ with zero average. For every $\tau$ and $v$, the map has integral zero:
\[
 \int_X\mathcal G(\tau,v)\,dV_\chi=0.
\]
At $(0,v_*)$, the derivative in $v$ is the isomorphism
\[
 \binom{n-1}{d-1}a_d\Delta_\chi:
 C^{2,\alpha}_0(X)\longrightarrow C^{0,\alpha}_0(X),
\]
where the subscript denotes integral zero. The implicit-function
theorem gives $v_\tau$ for small positive $\tau$, uniformly bounded in
$C^{2,\alpha}$. For $t=\tau^{-1}$ sufficiently large,
\[
 \omega_t=t\chi+\underline\omega+dd^cv_\tau>0.
\]
Its eigenvalues are $t+O(1)$ and therefore belong to $\mathcal A$. Elliptic regularity makes $\omega_t$ smooth.
\end{proof}

\begin{proof}[Proof of Theorem~\ref{main}]
Assume \textup{(i)}. Choose $T$ large enough for
Proposition~\ref{startprop}, and let $\mathcal S\subset[0,T]$ be the
set of parameters for which \eqref{path} has a smooth admissible
solution, normalized by integral zero. Then $T\in\mathcal S$.

For openness, the map
\[
 v\longmapsto P_x(A_{\underline\omega_t+dd^cv})-b(t)
\]
takes values in mean-zero functions by \eqref{bt}.
Its linearization at a solution is
\begin{equation}\label{linearization}
 L_tw\,\frac{\chi^n}{n!}=dd^cw\wedge\Xi_t,
 \qquad \Xi_t=(\Lambda\wedge e^{\omega_t})^{[n-1]}.
\end{equation}
The form $\Xi_t$ is closed and strictly positive, since $P_i>0$ on
$\mathcal A$. For real smooth $w,z$, integration by parts gives
\[
 \begin{aligned}
 \int_XwL_tz\,\frac{\chi^n}{n!}
 &=-\operatorname{Re}\int_X\sqrt{-1}\,
         \partial w\wedge\bar\partial z\wedge\Xi_t\\
 &=\int_XzL_tw\,\frac{\chi^n}{n!}.
 \end{aligned}
\]
The kernel consists of constants by ellipticity and the maximum
principle. Self-adjointness and Fredholm theory make $L_t$ an
isomorphism between the mean-zero H\"older spaces, so the
implicit-function theorem proves openness.

For closedness, Proposition~\ref{smoothprop} gives uniform bounds on
$[0,T]$ and keeps the solution matrices in a compact subset of
$\mathcal A$. If $t_j\in\mathcal S$ converges, a subsequence of the
normalized solutions converges smoothly. Its limit solves the
endpoint equation and remains admissible. Hence $\mathcal S$ is closed,
and connectedness gives $\mathcal S=[0,T]$.
At $t=0$, $b(0)=0$ and $[\omega_t]=[\omega_0]$, so $\omega_t$ satisfies \eqref{target}, proving \textup{(ii)}.

Conversely, a solution in \textup{(ii)} lies in
$\overline{\C_{P_x}}\subset\mathcal A$ by \eqref{admissible-levels}, and
therefore supplies \textup{(i)}.
For uniqueness, let $\omega_u,\omega_v$ be two solutions. Both belong
to $\C_G$ and satisfy $F(A_u)=F(A_v)=h_0(x)$. Concavity gives
\[
 0=F(A_u)-F(A_v)
 \le\partial F(A_v)[\chi^{-1}dd^c(u-v)].
\]
The right-hand side is a smooth strictly elliptic linear operator
applied to $u-v$. The maximum principle on the connected compact
manifold forces $u-v$ to be constant.
\end{proof}

\subsection{The numerical criterion}\label{numericalcriterion}
The numerical inequalities yield an admissible representative by Fu--Li--Zhang--Zhang
\cite{FLZZ}, to which
Theorem~\ref{main} applies.

\begin{proof}[Proof of Corollary~\ref{cor:numerical-solvability}]
Assume \textup{(i)}. Replace $a_0(x)$ by its average
$\bar a_0=\int_Xa_0\,dV_\chi$ and put
$\bar P=\widetilde P+\bar a_0$. This preserves compatibility and the
mixed intersection classes of degree $q<n$. Moreover,
\[
 -\bar P(\rho_1\one)=-\int_XP_x(\rho_1 \one)\,dV_\chi>0,\qquad
 \int_X\bar P(A_{\omega_0})\,dV_\chi=0.
\]
Its diagonal restriction satisfies \eqref{strict}, and its first
derived component is $\mathcal A$. Write its monic diagonal restriction as
\[
 \frac{\bar P(s\one)}{\binom nd a_d}
 =s^d-\sum_{i=0}^{d-1}\binom di c_i s^i.
\]
For $m=d-n+q$, direct differentiation gives
\[
 \frac{n!\,\bigl(\Lambda\wedge e^\alpha\bigr)^{[q]}}{\binom nd a_d}
 =\frac{d!}{m!}[\chi]^{n-d}
 \left(\alpha^m-
 \sum_{i=0}^{m-1}\binom mi c_{n-q+i}\alpha^i[\chi]^{m-i}\right),
\]
with the sum empty when $m=0$. These are positive multiples of the
mixed classes in the numerical criterion of Fu--Li--Zhang--Zhang
\cite[Theorem~1.2]{FLZZ}. By assumption \textup{(i)}, that theorem
gives a smooth closed representative $\omega_\tau\in\alpha$ with
$A_{\omega_\tau}\in\C_{\bar P+\tau}$ for every sufficiently small
$\tau>0$. Choose $\tau<-\bar P(\rho_1\one)$. Then $\bar P+\tau$ still
satisfies \eqref{strict}, so
\[
 \C_{\bar P+\tau}
 \subset\bigcap_{i=1}^n\C_{\partial_i\bar P}
 =\mathcal A.
\]
This proves $\textup{(i)}\Longrightarrow\textup{(ii)}$.
Theorem~\ref{main} gives
$\textup{(ii)}\Longleftrightarrow\textup{(iii)}$ and uniqueness.

Finally, assume \textup{(ii)} and choose its admissible representative
$\underline\omega$. In a $\chi$-unitary coframe diagonalizing
$\underline\omega$, the coefficients of
$(\Lambda\wedge e^{\underline\omega})^{[q]}$, for $n-d\le q<n$, are
coordinate derivatives of $\widetilde P$ of order $n-q$. These are
nonzero since $a_d>0$, and positive on $\mathcal A$ by
Proposition~\ref{idealinputs}\textup{(i)}, applied to the first
derivatives of $\widetilde P$.
Compactness therefore gives a common $\delta_{\mathrm{num}}>0$ such that
\[
 (\Lambda\wedge e^{\underline\omega})^{[q]}-\delta_{\mathrm{num}}\chi^q
 \quad\text{is strongly positive}.
\]
For every K\"ahler class $\kappa$, wedging with the $(k-q)$th power of
a K\"ahler representative of $\kappa$ and integrating over $Y$ gives
\eqref{uniform-numerical}. In particular,
$\textup{(ii)}\Longrightarrow\textup{(i)}$.
\end{proof}
\section{Hodge--Riemann relations and Lorentzian polynomials}
\label{nhr:section}

We prove Theorem~\ref{mainNHR} for the form-valued equation
\eqref{general-equation}. Theorem~\ref{nhr:strict} establishes the
Hodge--Riemann pairings, and Theorem~\ref{nhr:master} constructs the
Lorentzian homogenization.
Both use a linear elliptic equation to pass from pointwise concavity
to intersection inequalities. We verify the hypotheses for the
spectral equation in Lemma~\ref{nhr:spectral-verification}.

Let $\Lambda=\sum_{k=n-d}^{n}\Lambda^{[k]}$, where $2\le d\le n$ and
each $\Lambda^{[k]}$ is a smooth closed real $(k,k)$-form on $X$. The polynomial
\begin{equation}\label{form-polynomial}
 \mathcal F(\Lambda,\eta):=\bigl(\Lambda\wedge e^{\eta}\bigr)^{[n]}
\end{equation}
is defined for real $(1,1)$-forms $\eta$ at each point. If $\Lambda$ is fixed, we write $\mathcal F(\eta):=\mathcal F(\Lambda,\eta)$ for simplicity. We make the following hypotheses.

\begin{enumerate}[label=\textup{(H\arabic*)},ref=\textup{(H\arabic*)},
 leftmargin=*]
\item\label{H1}There exists $\varepsilon_0>0$ such that,
for every $0\leq\varepsilon<\varepsilon_0$, the perturbed operator
\begin{equation}\label{variable-perturbation}
 \mathcal F_\varepsilon(\eta)
 :=\mathcal F(\eta)+\varepsilon\,\frac{\chi^n}{[\chi]^n}
\end{equation}
has the following property: at every $x\in X$, the polynomial
\[
 P_\varepsilon:A_\eta \longmapsto
 \frac{\,\mathcal F_\varepsilon(\eta)}{\chi^n/n!}
\]
is an ideal G\aa rding elliptic operator.
\item \label{H0solution} There exists a smooth closed real $(1,1)$-form $\omega$ such that $\omega\in  \C_{P_{\varepsilon}}$ for $0<\varepsilon<\varepsilon_0$ and \begin{equation}
    \mathcal F(\omega)=0.
\end{equation} We denote $\alpha=[\omega]\in H^{1,1}(X,\mathbb R)$.

\item\label{H2} At every $x$, the form $\Lambda^{[n-d]}$ is
strictly positive, that is, $\Lambda^{[n-d]}\wedge\tau>0$ for every nonzero
strongly positive $(d,d)$-form $\tau$ at $x$; and for all positive
definite $\beta_1,\ldots,\beta_{d-2}$ the quadratic form
$$(\eta,\zeta)\mapsto\eta\zeta\beta_1\cdots\beta_{d-2}\Lambda^{[n-d]}/(\chi^n/n!)$$ on
real $(1,1)$-forms at $x$ is nondegenerate.
\end{enumerate}

In particular, if $\Lambda$ gives a spectral equation as in Theorem~\ref{main} which admits a solution, then the above hypotheses hold.

\begin{lemma}\label{nhr:spectral-verification}
Let $\omega$ be an admissible solution of the spectral equation in
Theorem~\ref{main}, with $\Lambda$ given by \eqref{Lambda-def}.
Then \ref{H1}--\ref{H2} hold.
\end{lemma}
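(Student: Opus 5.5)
The plan is to check the three hypotheses one at a time. Each reduces to a statement already proved for the spectral polynomial $P_x$ of \eqref{polynomial}. The only step that is not pure bookkeeping is the nondegeneracy in \ref{H2}.

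\emph{Hypothesis \ref{H1}.} First compute the normalized perturbation. Since $\chi^n/[\chi]^n$ divided by $\chi^n/n!$ is the constant $n!/\int_X\chi^n=1/V$, we get
\[
P_\varepsilon(A)=P_x(A)+\varepsilon/V,
\]
which is again of the form $\sum_j b_j\sigma_j$ with $b_d=a_d>0$. Set $\varepsilon_0=V\delta_{\min}$, with $\delta_{\min}$ from \eqref{scalar-gap}. For $0\le\varepsilon<\varepsilon_0$ we have $b=-\varepsilon/V>-\delta(x)$. Lemma~\ref{shiftlemma} then makes $P_x+\varepsilon/V$ an ideal G\aa rding polynomial on $\mathbb R^n$ with first derived component $\mathcal A$. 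Lemma~\ref{nhr:scalarization} lifts it to an ideal G\aa rding elliptic operator on Hermitian matrices, with component $\{A:\lambda(A)\in\C_{P_x+\varepsilon/V}\}$.

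\emph{Hypothesis \ref{H0solution}.} Take $\omega$ to be the given admissible solution. Its class is automatically closed, and the forms $\Lambda^{[k]}$ with $k<n$ are closed because $a_1,\ldots,a_d$ are constants. By \eqref{admissible-levels}, $\lambda(A_\omega)\in\overline{\C_{P_x}}\subset\mathcal A$. For $0<\varepsilon<\varepsilon_0$ we also have $P_\varepsilon(A_\omega)=\varepsilon/V>0$. Applying \eqref{componentidentity} to $G=P_x+\varepsilon/V$, whose first derived component is $\mathcal A$, gives
\[
\C_{P_\varepsilon}=\mathcal A\cap\{P_\varepsilon>0\}\ni\lambda(A_\omega),
\]
so $\omega\in\C_{P_\varepsilon}$ pointwise. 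Finally $\mathcal F(\omega)=0$ is exactly the equation \eqref{target}.

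\emph{Hypothesis \ref{H2}.} Here $\Lambda^{[n-d]}=a_d\chi^{n-d}/(n-d)!$. Strict positivity holds because $a_d>0$ and $\chi^{n-d}\wedge\tau>0$ for every nonzero strongly positive $(d,d)$-form $\tau$; this is seen by writing $\tau$ as a positive combination of products of positive $(1,1)$-forms.

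For nondegeneracy, observe that the quadratic form
\[
(\eta,\zeta)\mapsto\eta\zeta\beta_1\cdots\beta_{d-2}\chi^{n-d}/(\chi^n/n!)
\]
is, up to a positive constant, the mixed discriminant $D(\eta,\zeta,\beta_1,\ldots,\beta_{d-2},\chi,\ldots,\chi)$ on Hermitian matrices. The determinant is hyperbolic with respect to the positive definite cone, and that cone is complete: it has trivial lineality space, since $\det$ depends on every Hermitian direction. G\aa rding's theory, that is, the Alexandrov mixed discriminant inequality, then says that the complete polarization evaluated at $n-2$ vectors in the hyperbolicity cone is a quadratic form of Lorentzian signature $(1,n^2-1)$. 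In particular it is nondegenerate.

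I expect this nondegeneracy to be the main obstacle, or at least the step that needs a precise reference. My first attempt would be to cite the pointwise mixed Hodge--Riemann/Alexandrov theorem, namely the linear-algebra case of Dinh--Nguy\^en \cite{NHRDN}. As a fallback I would argue inductively: take the hyperbolic restriction $\partial_{\beta_1}\cdots\partial_{\beta_{d-2}}\partial_\chi^{n-d}\det$, note that it remains hyperbolic and complete, and use the standard fact that a complete hyperbolic quadratic form has signature $(1,N-1)$.
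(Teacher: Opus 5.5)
Your proposal is correct and follows the paper's proof: the paper likewise obtains \ref{H1} from Lemmas~\ref{shiftlemma} and \ref{nhr:scalarization} (you additionally make $\varepsilon_0=V\delta_{\min}$ explicit), \ref{H0solution} from \eqref{componentidentity} using $A_\omega\in\mathcal A$ and $P_\varepsilon(A_\omega)>0$, and \ref{H2} by citing the pointwise mixed Hodge--Riemann theorem of Dinh--Nguy\^en \cite[Proposition~2.1]{NHRDN}. Your hyperbolic-polynomial fallback also works, but only if you invoke the equality case of Alexandrov's inequality, or the preservation of a trivial lineality space under derivatives. The inequality by itself shows only that there is at most one positive eigenvalue, not nondegeneracy.
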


\begin{proof}
Since $\Lambda$ is given by \eqref{Lambda-def},
$\mathcal F(\eta)/(\chi^n/n!)=P_x(A_\eta)$.
Lemmas~\ref{shiftlemma} and \ref{nhr:scalarization} show that
$P_\varepsilon$ is ideal G\aa rding elliptic for all sufficiently small
$\varepsilon\ge0$. This gives \ref{H1}.
For $\varepsilon>0$, we have $A_\omega\in\mathcal A$ and
$\mathcal F_\varepsilon(\omega)>0$, so
\eqref{componentidentity} gives $A_\omega\in\C_{P_\varepsilon}$,
proving \ref{H0solution}.

Finally, $\Lambda^{[n-d]}=a_d\chi^{n-d}/(n-d)!$ is strictly positive,
and its terminal mixed pairings are nondegenerate by the mixed
Hodge--Riemann theorem \cite[Proposition~2.1]{NHRDN}. This gives
\ref{H2}.
\end{proof}

At the cohomology level, for any $[\gamma]\in H^{1,1}(X,\mathbb R)$, we define the intersection polynomial by
$$\mathcal I([\gamma]):=\int_X\mathcal F(\gamma).$$ 
Notice that $\mathcal I(\alpha)=0$ by \ref{H0solution}.

\subsection{The passage to cohomology}

We use a linear elliptic equation to choose a representative that is
pointwise primitive. This correction appears in Xiao's proof
\cite[Section~3.2]{Xiao21a}; see also Collins \cite{NHRCollins}.
Subscripts on differentials indicate evaluation at $\omega$ or $\alpha$.

For a tuple $B=(\beta_1,\ldots,\beta_j)$ of $(1,1)$-forms, write
\[
 \partial^B=\partial_{\beta_1}\cdots\partial_{\beta_j}.
\]
For the empty tuple, $\partial^B$ is the identity. The same notation
will be used for tuples of cohomology classes.

\begin{lemma}\label{nhr:pointwise}
Assume \ref{H1}--\ref{H2} and fix $0<\varepsilon<\varepsilon_0$. Let
$B=(\beta_1,\ldots,\beta_j)$ be a tuple of K\"ahler forms,
$0\le j\le d-2$, and put $m=d-j$. At each point of $X$, the polynomial
\[
 g=\frac{\partial^B\mathcal F_\varepsilon}{\chi^n/n!}
\]
is positive and strictly elliptic on a neighborhood of $\omega$ in the
space of real $(1,1)$-forms, and $g^{1/m}$ and $\log g$ are concave there.
\end{lemma}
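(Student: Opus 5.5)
Fix $x\in X$ and write $P_\varepsilon$ for the pointwise operator $A_\eta\mapsto \mathcal F_\varepsilon(\eta)/(\chi^n/n!)$ of \ref{H1}, which is ideal G\aa rding elliptic on $\mathcal Q(T_x^{1,0}X)$. The plan is to identify $g$ as an iterated derivative of $P_\varepsilon$ in positive definite directions and then apply Lemma~\ref{ige:basic}\textup{(iii)}. Each $\beta_i$ is positive definite, so $B_i:=\chi^{-1}\beta_i$ is a positive definite Hermitian matrix with respect to $\chi$. Differentiating $\eta\mapsto\mathcal F_\varepsilon(\eta)$ in the direction $\beta_i$ is the same as differentiating $P_\varepsilon$ in the direction $B_i$. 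Hence
\[
 g=\partial_{B_1}\cdots\partial_{B_j}P_\varepsilon\circ(\eta\mapsto A_\eta),
\]
and the constant term $\varepsilon\chi^n/[\chi]^n$ is killed once $j\ge1$.

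Step one shows that $g\not\equiv0$ and that $\deg g=m=d-j$. Since $\mathcal F_\varepsilon$ has degree $d$ in $\eta$, the top-degree part of $g$ is $\Lambda^{[n-d]}\wedge\beta_1\cdots\beta_j\wedge\eta^{m}/m!$. I will evaluate it at $\eta=\chi$. The form $\chi^m\beta_1\cdots\beta_j$ is strongly positive and nonzero, so \ref{H2} gives $\Lambda^{[n-d]}\wedge\beta_1\cdots\beta_j\wedge\chi^{m}>0$. Thus $g$ is nonzero of degree exactly $m\ge2$. Lemma~\ref{ige:basic}\textup{(iii)} then says that $g$ (as a polynomial in $A_\eta$) is ideal G\aa rding elliptic, with $\C_{P_\varepsilon}\subset\C_g$, and that $g^{1/m}$ and $\log g$ are concave on $\C_g$.

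Step two places $\omega$ inside the component. By \ref{H0solution}, $A_\omega\in\C_{P_\varepsilon}\subset\C_g$ at every point. Since $\C_g$ is open, a neighborhood of $A_\omega$ lies in $\C_g$, and there $g>0$ and the concavity statements hold. To pass to a neighborhood of $\omega$ in the space of real $(1,1)$-forms, uniform over the compact $X$ if needed, I will use that $A_\omega(x)$ is continuous and that each fibre component is open. The convexity of $\C_g$ makes concavity along segments meaningful.

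Step three proves strict ellipticity, which is the main point. Lemma~\ref{ige:basic}\textup{(iii)} applied again shows that every first derivative $\partial_{\zeta\zeta^*}g$, for a unit vector $\zeta$, is either identically zero or positive on $\C_g$, because it is a further derivative of $P_\varepsilon$ in a semipositive direction. I therefore need to rule out $\partial_{\zeta\zeta^*}g\equiv0$. If $m\ge2$, its top-degree part is
\[
 \Lambda^{[n-d]}\wedge\beta_1\cdots\beta_j\wedge(\sqrt{-1}\zeta\wedge\bar\zeta)\wedge\eta^{m-1}/(m-1)!.
\]
At $\eta=\chi$ this is $\Lambda^{[n-d]}$ wedged with a nonzero strongly positive $(d,d)$-form, so it is positive by \ref{H2}. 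Hence $\partial_{\zeta\zeta^*}g>0$ on $\C_g$ for every $\zeta$, and compactness of the unit sphere on a compact neighborhood gives a uniform lower bound. This proves strict ellipticity.

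The expected subtlety is bookkeeping: the identification of form derivatives with matrix derivatives, and the fact that for $j=0$ the $\varepsilon$ term survives. Neither step affects the argument.
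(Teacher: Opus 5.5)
Your proposal is correct and follows the paper's proof essentially step for step. You identify $g$ with $\partial^B P_\varepsilon$, and you apply \ref{H2} twice: once to the top-degree part at $\chi$ to get $\deg g=m$, and once to the top-degree part of the derivative in a rank-one direction to show that this derivative is nonzero. You then invoke Lemma~\ref{ige:basic}\textup{(iii)} together with $A_\omega\in\C_{P_\varepsilon}$ from \ref{H0solution}, exactly as the paper does. The only differences are cosmetic: you state the concavity on the larger set $\C_g\supset\C_{P_\varepsilon}$, and you add a compactness argument for a uniform ellipticity constant.
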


\begin{proof}
Fix a point of $X$. By \ref{H1} and \ref{H0solution}, $P_{\varepsilon}$ is
ideal G\aa rding elliptic and $\omega\in\C_{P_{\varepsilon}}$.
The degree-$m$ homogeneous part of $g=\partial^B P_{\varepsilon}$,
evaluated at $\chi$, is
\[
 \frac{\beta_1\wedge\cdots\wedge\beta_j\wedge\chi^m
       \wedge\Lambda^{[n-d]}}{m!\,\chi^n/n!}>0
\]
by \ref{H2}. Thus $g$ has degree exactly $m$.
Lemma~\ref{ige:basic}\textup{(iii)} gives positivity of $g$ and
concavity of $g^{1/m}$ and $\log g$ on $\C_{P_{\varepsilon}}$,
which contains a neighborhood of $\omega$.

For a nonzero rank-one semipositive form $\eta$, the degree-$(m-1)$
homogeneous part of $\partial_\eta g$, evaluated at $\chi$, is
\[
 \frac{\beta_1\wedge\cdots\wedge\beta_j\wedge\eta\wedge\chi^{m-1}
       \wedge\Lambda^{[n-d]}}{(m-1)!\,\chi^n/n!}>0.
\]
Hence $\partial_\eta g$ is nonzero and positive on $\C_{P_{\varepsilon}}$
by the same lemma. This proves strict ellipticity.
\end{proof}

\begin{theorem}\label{nhr:strict}
Assume \ref{H1}--\ref{H2}. For every K\"ahler tuple $B$ of length
$0\le j\le d-2$, with $m=d-j$ and any
$0<\varepsilon<\varepsilon_0$, put
\[
 L_B=\partial(\partial^B\mathcal I)_\alpha,
 \qquad Q_B=\partial^2(\partial^B\mathcal I)_\alpha.
\]
Then $Q_B$ has signature $(1,h^{1,1}(X)-1)$ and is negative definite
  on $\ker L_B$. Moreover, for any $u\in H^{1,1}(X,\mathbb R)$, \begin{equation}\left(\int_X\partial^B\mathcal F_\varepsilon(\omega)\right)
 Q_B(u,u)\le\frac{m-1}{m}L_B(u)^2.\label{nhr:global-root}\end{equation}
\end{theorem}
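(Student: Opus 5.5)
The plan is to turn the pointwise concavity of Lemma~\ref{nhr:pointwise} into an inequality in cohomology, using a linear elliptic correction of representatives. We then read off the signature from the resulting reverse Cauchy--Schwarz inequality together with the nondegeneracy in \ref{H2}.

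Fix $B$ and $0<\varepsilon<\varepsilon_0$, and set $g=\partial^B\mathcal F_\varepsilon/(\chi^n/n!)$ as in Lemma~\ref{nhr:pointwise}. At $\omega$ the $\Lambda^{[k]}$ are closed, and $\partial^B$, $\partial_u$, $\partial_u^2$ of $\mathcal F$ commute with integration over cohomology representatives. Hence for any smooth closed representative $\eta$ of $u$,
\[
 L_B(u)=\int_X\partial_\eta\partial^B\mathcal F_\varepsilon(\omega),\qquad
 Q_B(u,u)=\int_X\partial^2_\eta\partial^B\mathcal F_\varepsilon(\omega).
\]
The $\varepsilon$ term only affects the constant part of $\mathcal F_\varepsilon$, so for $j\le d-2$ it does not change the first and second derivatives in $\eta$.

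Pointwise concavity of $g^{1/m}$ at $\omega$ gives
\[
 g\,\partial_\eta^2g\le\frac{m-1}{m}(\partial_\eta g)^2 .
\]
We choose $\eta=\eta_0+dd^cf$ so that $\partial_\eta g=c\,g$ for a constant $c$. This is a linear equation for $f$:
\[
 \partial_{dd^cf}\partial^B\mathcal F_\varepsilon(\omega)-c\,\partial^B\mathcal F_\varepsilon(\omega)=-\partial_{\eta_0}\partial^B\mathcal F_\varepsilon(\omega).
\]
The operator $f\mapsto dd^cf\wedge\Xi$, with $\Xi=\partial\partial^B\mathcal F_\varepsilon(\omega)$ a strictly positive closed $(n-1,n-1)$-form (strict ellipticity from Lemma~\ref{nhr:pointwise}, closedness from the closedness of $\omega,\beta_i,\Lambda$), is a self-adjoint elliptic operator whose image is the set of top forms with integral zero. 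Fredholm theory therefore solves the equation exactly when
\[
 c=\frac{L_B(u)}{\int_X\partial^B\mathcal F_\varepsilon(\omega)}.
\]
Here the denominator $M:=\int_X\partial^B\mathcal F_\varepsilon(\omega)>0$ because $g>0$.

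With this representative, multiply the concavity inequality by $1/g$ and integrate against $\chi^n/n!$:
\[
 Q_B(u,u)\le\frac{m-1}{m}\int_X c^2\,g\,\frac{\chi^n}{n!}=\frac{m-1}{m}\,\frac{L_B(u)^2}{M}.
\]
This is \eqref{nhr:global-root}.

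It follows that $Q_B\le0$ on $\ker L_B$. To get strict negativity, suppose $Q_B(u,u)=0$ for some $u\in\ker L_B$. Then equality holds pointwise, which forces $\partial_\eta g=0$ and $\partial^2_\eta g=0$ for that representative. The main obstacle is to turn this equality case into $u=0$. The first thing to try is a perturbation argument: $u$ maximizes $Q_B$ on $\ker L_B$, so $Q_B(u,\cdot)$ vanishes on $\ker L_B$, and $Q_B(u,\cdot)$ restricted to $\ker L_B$ lies in the kernel there. A Lagrange multiplier then gives $Q_B(u,\cdot)=\lambda L_B$. Testing on $\alpha$-like directions, or on the class $[\chi]$ where $Q_B([\chi],\cdot)$ is computable, should show $\lambda=0$, so $u$ lies in the radical of $Q_B$.

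It then remains to exclude a nontrivial radical. For this we use the homogenization and a deformation. Consider the family $t\,\omega$-scaled operators $s^{-m}\partial^B\mathcal F_\varepsilon(\omega+s\chi)$ as $s\to\infty$. Their quadratic forms converge to the terminal mixed pairing $\int\eta\zeta\beta_1\cdots\beta_j\chi^{m-2}\Lambda^{[n-d]}$, which is nondegenerate with signature $(1,h^{1,1}-1)$ by \ref{H2} and Hodge theory for the strictly positive pointwise pairing (as in Dinh--Nguy\^en). Along the deformation $\omega+s\chi$ for $s\ge0$, which stays in $\C_{P_\varepsilon}$, the same argument gives nonpositivity on the kernel of the linear form, hence at most one positive direction. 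The form is positive in the direction of $u=[\omega+s\chi]$ itself by homogeneity/concavity ($\partial^2$ along the Garding direction). So the signature can change only if the radical jumps, and the semicontinuity argument above, carried out at each $s$, rules that out. Continuity then gives signature $(1,h^{1,1}-1)$ at $s=0$. This deformation-plus-equality-case step is the main obstacle.
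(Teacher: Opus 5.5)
The part of your proposal proving \eqref{nhr:global-root} matches the paper: the corrector $\eta_\varphi=\eta+\ddc\varphi$ with $\partial g[\eta_\varphi]=c\,g$ and $c=L_B(u)/\int_X\partial^B\mathcal F_\varepsilon(\omega)$, then pointwise concavity of $g^{1/m}$, then integration. The gap is in the strict negativity on $\ker L_B$, and hence in the signature.

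\textbf{Why the deformation fails.} Your deformation along $\omega+s\chi$ does not close. Inequality \eqref{nhr:global-root} at $\omega+s\chi$ gives $Q_{B,s}\le0$ on $\ker L_{B,s}$, so there is at most one positive eigenvalue. Eigenvalue continuity then only shows that the signature is locally constant on the set of $s$ where $Q_{B,s}$ is nondegenerate. Nothing prevents a negative eigenvalue from reaching zero at $s=0$, or at any $s_0$. Ruling out a radical ``at each $s$'' is exactly the original problem at the point $\omega+s\chi$, so the argument is circular. There are two further problems.
\begin{enumerate}
\item[(1)] \ref{H2} is pointwise. Cohomological nondegeneracy of the terminal pairing for a general $\Lambda^{[n-d]}$ cannot be cited; it is itself the case $m=2$ of the theorem.
\item[(2)] Positivity of the form on $[\omega+s\chi]$ is unjustified, since $\mathcal F$ is inhomogeneous and $\alpha$ need not be nef. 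The positive direction should be $[\chi]$, where $L_B([\chi])>0$ and $Q_B([\chi],[\chi])>0$ by positivity of iterated derivatives in semipositive directions.
\end{enumerate}

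\textbf{The missing pointwise argument.} You were one step from it. In the equality case you have, at each point, $\partial g[v]=\partial^2g[v,v]=0$ for $v=\eta_\varphi(x)$. Put $g_k=\partial_\chi^kg$ for $0\le k\le m-2$. Each $g_k$ is positive and log concave near $\omega$ by Lemma~\ref{nhr:pointwise}, applied to the tuple $B$ extended by $k$ copies of $\chi$. Now iterate:
\begin{enumerate}
\item[(a)] From $\partial g_k[v]=\partial^2g_k[v,v]=0$, negative semidefiniteness of $\partial^2\log g_k$ gives $\partial^2g_k[v,w]=0$ for every $w$. Taking $w=\chi$ gives $\partial g_{k+1}[v]=0$.
\item[(b)] The function $s\mapsto\partial^2\log g_k(\omega+s\chi)[v,v]$ is nonpositive for small $|s|$ and vanishes at $s=0$. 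Hence its $s$-derivative at $0$ vanishes, and that derivative equals $\partial^2g_{k+1}[v,v]/g_k$.
\item[(c)] Repeating (a)--(b) reaches the quadratic $g_{m-2}$. Its Hessian is the terminal pairing, which is nondegenerate by \ref{H2}, so $\partial^2g_{m-2}[v,\cdot]=0$ forces $v=0$.
\end{enumerate}
Thus $\partial^2g$ is strictly negative on $\ker\partial g$ at every point. For $0\ne u\in\ker L_B$ the corrected representative satisfies $\partial g[\eta_\varphi]\equiv0$ and $\eta_\varphi\not\equiv0$, so $Q_B(u,u)<0$. Neither the Lagrange multiplier nor the deformation is needed. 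Step (a) at $k=0$ also makes your radical step explicit, since $\partial^2g[\eta_\varphi,\cdot]=0$ pointwise already gives $Q_B(u,\cdot)=0$.
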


\begin{proof}
Choose K\"ahler representatives of $B$, fix
$0<\varepsilon<\varepsilon_0$, and put $m=d-j$.
At each point, consider
\[
 g_k=\frac{\partial_\chi^k\partial^B\mathcal F_\varepsilon}
                {\chi^n/n!},\qquad 0\le k\le m-2.
\]
Lemma~\ref{nhr:pointwise} gives positivity and log concavity near
$\omega$ at every level. The Hessian of $g_{m-2}$ is a positive multiple of
\[
 (\eta,\zeta)\longmapsto
 \frac{\eta\zeta\beta_1\cdots\beta_j\chi^{m-2}\Lambda^{[n-d]}}
      {\chi^n/n!},
\]
which is nondegenerate by \ref{H2}.

All derivatives in the following pointwise argument are evaluated at
$\omega$. Suppose $\partial g_k[v]=\partial^2g_k[v,v]=0$.
Negative semidefiniteness of $\partial^2\log g_k$ implies
$\partial^2g_k[v,w]=0$ for every $w$, and hence
$\partial g_{k+1}[v]=0$ when $k<m-2$.
For $k<m-2$, the nonpositive function $\partial^2\log g_k[v,v]$
attains zero at $\omega$, so
\[
 0=\partial_\chi\bigl(\partial^2\log g_k[v,v]\bigr)
   =\frac{\partial^2g_{k+1}[v,v]}{g_k}.
\]
Iterating reaches the quadratic endpoint, where its nondegenerate
Hessian forces $v=0$. Thus $\partial^2g_0$ is strictly negative on
$\ker\partial g_0$.

For $u\in H^{1,1}(X,\mathbb R)$, take a closed representative $\eta$
and solve
\begin{equation}\label{nhr:corrector}
 \partial(\partial^B\mathcal F_\varepsilon)_\omega
       [\eta+\ddc\varphi]
 =\frac{L_B(u)}{\int_X\partial^B\mathcal F_\varepsilon(\omega)}
       \partial^B\mathcal F_\varepsilon(\omega).
\end{equation}
The first derivative is strictly elliptic, and its defining form is
closed. The operator in $\varphi$ is therefore self-adjoint with
kernel the constants. The equation has zero integral after subtracting
$\partial(\partial^B\mathcal F_\varepsilon)_\omega[\eta]$ from both
sides, so it has a smooth solution, unique modulo constants.

Apply the concavity of $g_0^{1/m}$ to $\eta_\varphi:=\eta+\ddc \varphi$ and use \eqref{nhr:corrector} to obtain \begin{align}   \partial^2g_0[\eta_\varphi,\eta_\varphi]&\leq\frac{m-1}{m}\frac{(\partial g_0[\eta_\varphi])^2}{g_0(\omega)}\\
    &=\frac{m-1}{m}\left(\frac{L_B(u)}{\int_X\partial^B\mathcal F_{\varepsilon}(\omega)}\right)^2\,g_0.\notag
\end{align}
The forms defining the second derivative of
$\partial^B\mathcal F_\varepsilon$ at $\omega$ are closed.  By Stokes'
theorem, integrating against $\chi^n/n!$ gives
\begin{equation}
 \left(\int_X\partial^B\mathcal F_\varepsilon(\omega)\right)
 Q_B(u,u)\le\frac{m-1}{m}L_B(u)^2.
\end{equation}

If $0\not=u\in\ker L_B$, $\eta_\varphi$ cannot vanish identically, so the strict pointwise
negativity gives $$Q_B(u,u)=\int_X \partial^2g_0(\omega)[\eta_\varphi,\eta_\varphi]\,\frac{\chi^n}{n!}<0.$$
Evaluating at $\chi$ gives $L_B([\chi])>0$ and
$Q_B([\chi],[\chi])>0$, proving the signature.


\end{proof}

\subsection{The global Lorentzian polynomial}

\begin{theorem}\label{nhr:master}
Assume \ref{H1}--\ref{H2}. Let $\beta_1,\ldots,\beta_N$ be nef
classes, $\mathbf{x}=(x_1,\ldots,x_N)$, and
$0\le\varepsilon<\varepsilon_0$. The degree-$d$ homogeneous polynomial
\begin{equation}\label{nhr:homogenization}
 \mathcal T_{\alpha,\varepsilon}(y,\mathbf{x})
 =y^d\left[\mathcal I\!\left(\alpha+
        \frac1y\sum_{i=1}^N x_i\beta_i\right)+\varepsilon\right],
\end{equation}
with its polynomial extension at $y=0$, is Lorentzian.
\end{theorem}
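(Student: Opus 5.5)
The plan is to verify the Br\"and\'en--Huh criterion for $\mathcal T_{\alpha,\varepsilon}$ when all $\beta_i$ are K\"ahler and $0<\varepsilon<\varepsilon_0$, and then to obtain the general statement by a limiting argument. The Lorentzian property is closed under coefficientwise limits. Nef classes are limits of K\"ahler classes, and $\varepsilon=0$ is a limit of $\varepsilon>0$. It therefore suffices to show that, in the K\"ahler case with $\varepsilon>0$, the polynomial has positive coefficients and every quadratic form $\partial^{\mathbf a}\mathcal T_{\alpha,\varepsilon}$, $|\mathbf a|=d-2$, has at most one positive eigenvalue. Full positive support is automatically M-convex.

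\emph{Coefficients.} Expanding in $1/y$ gives
\[
\mathcal T_{\alpha,\varepsilon}(y,\mathbf x)=\varepsilon y^d+\sum_{k\ge1}\frac{y^{d-k}}{k!}\,\partial^k\mathcal I_\alpha\Bigl[\sum_i x_i\beta_i\Bigr]^k,
\]
where the $k=0$ term is $\mathcal I(\alpha)+\varepsilon=\varepsilon$ by \ref{H0solution}. The coefficient of $y^{d-k}x^{\mathbf c}$ with $|\mathbf c|=k\ge1$ is a positive multiple of $\int_X\partial^{B}\mathcal F_\varepsilon(\omega)$, where $B$ is the tuple of K\"ahler forms $\beta_i$ listed with multiplicity $\mathbf c$. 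The $\varepsilon$-term does not contribute to derivatives of positive order. This integrand is pointwise positive. For $k\le d-2$, positivity is Lemma~\ref{nhr:pointwise}. For $k=d-1,d$, the derivative is a positive constant multiple of $\beta$'s wedged with $\Lambda^{[n-d]}$ (and $\omega$ if $k=d-1$). Its positivity follows from \ref{H2}, or from Lemma~\ref{ige:basic}(iii), since these are nonzero iterated derivatives on $\C_{P_\varepsilon}$.

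\emph{Hessians.} Fix $\mathbf a$ with $j$ derivatives in the $x$-variables, forming a K\"ahler tuple $B$, and $d-2-j$ derivatives in $y$. Put $m=d-j$. Then $\partial_x^{B}\mathcal T_{\alpha,\varepsilon}$ is the degree-$m$ homogenization $y^m\,p(\gamma/y)$ of
\[
p(\gamma)=\partial^B\mathcal I(\alpha+\gamma)+\varepsilon\,\delta_{j0},\qquad \gamma=\sum_ix_i\beta_i.
\]
Write $p_0=p(0)$, $L=L_B$ and $Q=Q_B$. Then $p_0=\int_X\partial^B\mathcal F_\varepsilon(\omega)$, since $\mathcal I(\alpha)=0$ when $j=0$. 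Applying $\partial_y^{m-2}$ to $y^m\bigl(p_0+L(\gamma/y)+\tfrac12Q(\gamma/y)\bigr)+\dots$ leaves, up to the factor $(m-2)!$, the quadratic form
\[
q(y,\gamma)=\tbinom m2p_0\,y^2+(m-1)L(\gamma)\,y+\tfrac12Q(\gamma,\gamma).
\]
I will check that $q$ has exactly one positive eigenvalue on $\mathbb R\oplus H^{1,1}(X,\mathbb R)$. The coefficient $\binom m2p_0$ is positive, so $q$ is positive on the $y$-axis. Completing the square in $y$, the restriction of $q$ to the $q$-orthogonal complement of that axis is
\[
\tfrac12Q(\gamma,\gamma)-\frac{(m-1)^2L(\gamma)^2}{4\binom m2p_0}
=\frac{1}{2p_0}\Bigl(p_0Q(\gamma,\gamma)-\frac{m-1}{m}L(\gamma)^2\Bigr)\le0,
\]
by \eqref{nhr:global-root} of Theorem~\ref{nhr:strict}. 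Hence $q$ has exactly one positive eigenvalue. The Hessian $\partial^{\mathbf a}\mathcal T_{\alpha,\varepsilon}$ is the pullback of $q$ under the linear map $(y,\mathbf x)\mapsto(y,\sum_ix_i\beta_i)$. A pullback cannot increase the number of positive eigenvalues, and the form is positive at $(1,0)$, so it has exactly one.

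\emph{Main obstacle.} The key step is matching constants: the homogenization and the $y$-derivatives must produce exactly the ratio $(m-1)/m$ that appears in \eqref{nhr:global-root}. That inequality is sharp for a degree-$m$ concavity statement, so the computation must be done carefully. One must also confirm that $\varepsilon$ enters $p_0$ only when $j=0$ and that $p_0>0$ in every case. The remaining points are routine. The identification of mixed derivatives of $\mathcal I$ with integrals of $\partial^B\mathcal F_\varepsilon$ follows from Stokes' theorem, since the forms involved are closed. The limit $\varepsilon\downarrow0$ and the approximation of nef classes by K\"ahler classes use closedness of the Lorentzian cone.
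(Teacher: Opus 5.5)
Your proposal is correct and follows essentially the paper's proof: positive coefficients from pointwise positivity of the iterated derivatives of $\mathcal F_\varepsilon$ at $\omega$, a Schur-complement check of the Hessians of the order-$(d-2)$ derivatives using \eqref{nhr:global-root}, the Br\"and\'en--Huh criterion, and closedness under the approximation by K\"ahler classes $\beta_i+\tau[\chi]$ and by $\varepsilon>0$. One small correction: the order-$(d-1)$ coefficient is the integral of $\beta_1\cdots\beta_{d-1}\wedge(\Lambda^{[n-d]}\wedge\omega+\Lambda^{[n-d+1]})$, so its positivity does not follow from \ref{H2} alone; it needs your alternative argument via Lemma~\ref{ige:basic}(iii), with \ref{H2} only supplying nonvanishing.
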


\begin{proof}
First suppose that $0<\varepsilon<\varepsilon_0$ and the $\beta_i$
are K\"ahler. Choose K\"ahler representatives.
Put $f(\mathbf{x})=\mathcal I(\alpha+\sum_i x_i\beta_i)+\varepsilon$.
For a multi-index $\nu$ with $|\nu|=j\le d-2$, set
$g=\partial_{\mathbf{x}}^\nu f$, $m=d-j$, and
\[
 J=g(0),\qquad b=\nabla g(0),\qquad Q_\nu=\partial^2g(0).
\]
The polynomial $g$ is the integral of the corresponding mixed
derivative of $\mathcal F_\varepsilon$.
Inequality~\eqref{nhr:global-root} gives
\[
 J>0,\qquad Q_\nu-\frac{m-1}{mJ}bb^T\le0.
\]
The Hessian of
$\partial_y^{m-2}\partial_{\mathbf{x}}^\nu\mathcal T_{\alpha,\varepsilon}$ is
\[
 (m-2)!
 \begin{pmatrix}
  m(m-1)J &(m-1)b^T\\
  (m-1)b &Q_\nu
 \end{pmatrix}.
\]
Its Schur complement is negative semidefinite, so it has exactly
one positive eigenvalue. These are all coordinate derivatives of
order $d-2$.

At each point $p\in X$, put
$h_p(\mathbf{x})=P_{\varepsilon}(A_\omega(p)+\sum_i x_iB_i)$,
where $B_i=\chi^{-1}\beta_i>0$.
By \ref{H1} and \ref{H0solution}, $h_p$ is ideal G\aa rding with the origin in its component.
Every iterated coordinate derivative of order at most $d$ is nonzero
by \ref{H2} and positive at the origin by
Proposition~\ref{idealinputs}\textup{(i)}. Thus every degree-$d$ monomial in
$y^dh_p(\mathbf{x}/y)$ has a positive coefficient. Integration over $X$
preserves positivity, including the coefficient $\varepsilon$ of $y^d$.
The support is therefore the full simplex of degree-$d$ multi-indices.
The Hessian criterion of Br\"and\'en--Huh
\cite[Theorem~2.25]{NHRBH} now shows that
$\mathcal T_{\alpha,\varepsilon}$ is Lorentzian.

For nef classes $\beta_i$ and $0\le\varepsilon<\varepsilon_0$, take
$0<\tau<\varepsilon_0-\varepsilon$ and put
$\beta_i^{(\tau)}=\beta_i+\tau[\chi]$. These classes are K\"ahler, so the polynomials
\[
 y^d\left[\mathcal I\!\left(\alpha+
       \frac1y\sum_{i=1}^N x_i\beta_i^{(\tau)}\right)+\varepsilon+\tau\right]
\]
are Lorentzian by the preceding argument.
Since the Lorentzian class is closed
\cite[Theorem~2.25]{NHRBH}, taking $\tau\to0$ proves the assertion. 
\end{proof}

This proves Theorem~\ref{mainNHR}\ref{mainNHR-b}.
The cone-Lorentzian formulation follows by choosing a finite K\"ahler
family that contains the given directions and spans $H^{1,1}(X,\mathbb R)$.

\subsection{Lefschetz isomorphisms and Hodge-index inequalities}

Set 
\begin{equation}\label{nhr:theta-class}
 \Theta=\bigl(\Lambda\wedge e^\alpha\bigr)^{[n-2]},
 \qquad Q(u,v)=\int_Xuv\Theta.
\end{equation}

\begin{corollary}\label{nhr:lefschetz}
The pairing $Q$ has signature $(1,h^{1,1}(X)-1)$, and cup product
induces an isomorphism
\begin{equation}\label{nhr:cup-isomorphism}
 \Theta\smile-:H^{1,1}(X,\mathbb R)
                   \longrightarrow H^{n-1,n-1}(X,\mathbb R).
\end{equation}
If $X$ is projective, the restriction to divisor classes has signature
$(1,\rho(X)-1)$, and the pairing induces an isomorphism
\begin{equation}\label{nhr:numerical-lefschetz}
 N^1(X)_{\mathbb R}\longrightarrow
 N^1(X)_{\mathbb R}^{*}\simeq N_1(X)_{\mathbb R},
 \qquad u\longmapsto\left(v\longmapsto\int_Xuv\Theta\right).
\end{equation}
\end{corollary}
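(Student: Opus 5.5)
The plan is to read Corollary~\ref{nhr:lefschetz} off the case $j=0$ of Theorem~\ref{nhr:strict}. The corollary is stated under the standing hypotheses of this section, namely \ref{H1}--\ref{H2}, which Lemma~\ref{nhr:spectral-verification} supplies in the spectral case.

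With the empty tuple $B$, the polynomial $\partial^B\mathcal I$ is just $\mathcal I(\gamma)=\int_X(\Lambda\wedge e^\gamma)^{[n]}$. Differentiating twice at $\alpha$ in directions $u,v$ gives
\[
 Q_\emptyset(u,v)=\int_X uv\,\bigl(\Lambda\wedge e^\alpha\bigr)^{[n-2]}=Q(u,v).
\]
This uses $\partial_u\partial_v e^\gamma=uv\,e^\gamma$. Since $d\ge2$, the case $j=0$ is allowed, so Theorem~\ref{nhr:strict} gives signature $(1,h^{1,1}(X)-1)$ for $Q$. In particular $Q$ is nondegenerate on $H^{1,1}(X,\mathbb R)$.

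For the isomorphism \eqref{nhr:cup-isomorphism}, I use Serre duality, or Poincar\'e duality on a compact K\"ahler manifold, which identifies $H^{n-1,n-1}(X,\mathbb R)$ with the dual of $H^{1,1}(X,\mathbb R)$ through the integration pairing. The two spaces therefore have the same dimension $h^{1,1}$. If $\Theta\smile u=0$, then $Q(u,v)=0$ for all $v$, so $u=0$ by nondegeneracy. The map is thus injective between spaces of equal dimension, hence an isomorphism.

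For the projective statement, let $W=N^1(X)_{\mathbb R}\subset H^{1,1}(X,\mathbb R)$ be the image of the real Néron--Severi group; by Lefschetz $(1,1)$ this is the space spanned by divisor classes, of dimension $\rho(X)$. First, $W$ contains an ample class $h$, and $Q(h,h)>0$. To see this, take a K\"ahler tuple and evaluate at a Kähler class as in the last line of the proof of Theorem~\ref{nhr:strict}: the argument there shows $Q_B([\chi],[\chi])>0$ for any Kähler $[\chi]$, and $h$ is such a class, since the pointwise positivity of $g_0$'s second derivative in Kähler directions comes from Lemma~\ref{nhr:pointwise} and Proposition~\ref{idealinputs}(i).

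The main point is that the restriction of a form of signature $(1,N-1)$ to a subspace containing a positive vector has signature $(1,\dim W-1)$. Indeed, $W\cap h^{\perp_Q}$ lies in $h^{\perp_Q}$, on which $Q$ is negative definite, because a positive vector's orthogonal complement in a $(1,N-1)$ form is negative definite. So $Q|_W$ is nondegenerate with signature $(1,\rho-1)$.

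Nondegeneracy on $W$ gives the isomorphism $W\to W^*$, $u\mapsto Q(u,\cdot)$. Finally, $W^*\simeq N_1(X)_{\mathbb R}$ by the standard duality between numerical classes of divisors and curves. This holds because numerical equivalence on divisors is the kernel of the intersection pairing with curves, and on a projective manifold it coincides with equality in $H^{1,1}$ modulo torsion. So no extra degeneracy arises.

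I expect no serious obstacle. The only care needed is in checking that an ample class is $Q$-positive, which follows from positivity of $\partial^2\mathcal I$ in Kähler directions, and in the identification of $N^1$ with a subspace of $H^{1,1}$.
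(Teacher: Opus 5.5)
Your proposal is correct and follows the paper's proof. The $j=0$ case of Theorem~\ref{nhr:strict} gives the signature, and Poincar\'e duality together with nondegeneracy gives the cup-product isomorphism. An ample class of positive $Q$-square then forces the restriction to $N^1(X)_{\mathbb R}$ to have signature $(1,\rho(X)-1)$, and numerical duality identifies $N^1(X)_{\mathbb R}^*$ with $N_1(X)_{\mathbb R}$. Your argument for $Q(h,h)>0$ is the positivity the paper relies on when it asserts that an ample class has positive square: the second derivative of $P_\varepsilon$ at $A_\omega$ in a K\"ahler direction is nonzero by \ref{H2} and positive on the component by Proposition~\ref{idealinputs}\textup{(i)}.
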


\begin{proof}
Theorem~\ref{nhr:strict} gives the signature, and Hodge--Poincar\'e duality
gives \eqref{nhr:cup-isomorphism}. On a projective $X$, the divisor
subspace contains an ample class of positive square, so the restriction
is nondegenerate. The perfect numerical pairing identifies
$N^1(X)_{\mathbb R}^{*}$ with $N_1(X)_{\mathbb R}$, giving
\eqref{nhr:numerical-lefschetz}.
\end{proof}

\begin{corollary}\label{nhr:hodge-index}
Let $\mathcal D$ be the component of $\{\gamma\in H^{1,1}(X,\mathbb R):Q(\gamma,\gamma)>0\}$
containing $[\chi]$. For
$\gamma,\eta\in\mathcal D$,
\begin{equation}\label{nhr:AF-inequality}
 \left(\int_X\gamma\eta\,\Theta\right)^2
 \ge\left(\int_X\gamma^2\Theta\right)
     \left(\int_X\eta^2\Theta\right),
\end{equation}
with equality exactly when $\gamma$ and $\eta$ are positively
proportional. The weak inequality extends to nef classes.
\end{corollary}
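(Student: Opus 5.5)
The inequality is the reverse Cauchy--Schwarz inequality for a form of Lorentzian signature, applied on the positive cone $\mathcal D$. The input is Corollary~\ref{nhr:lefschetz}: $Q$ has signature $(1,h^{1,1}(X)-1)$. The case $j=0$ of Theorem~\ref{nhr:strict} adds that $Q=\partial^2\mathcal I_\alpha$, up to the factor $\Theta$ normalization, is negative definite on the $Q$-orthogonal complement of a positive vector.

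Fix $\gamma,\eta\in\mathcal D$, so $Q(\gamma,\gamma)>0$. Since the signature is $(1,h^{1,1}-1)$, $Q$ is negative definite on $\gamma^{\perp_Q}$. Indeed, a nonnegative vector there would span, together with $\gamma$, a $2$-plane on which $Q\ge0$ is not negative, contradicting that the positive index is $1$. Write $\eta=t\gamma+w$ with $t=Q(\gamma,\eta)/Q(\gamma,\gamma)$ and $w\in\gamma^{\perp_Q}$. Then
\[
Q(\eta,\eta)=t^2Q(\gamma,\gamma)+Q(w,w)\le t^2Q(\gamma,\gamma),
\]
and this rearranges to \eqref{nhr:AF-inequality}. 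Equality forces $Q(w,w)=0$, hence $w=0$, so $\eta=t\gamma$. For the sign of $t$, note that $\mathcal D$ is one of the two components of the cone $\{Q>0\}$ and these components are $\pm$ each other. Also $Q(\gamma,\eta)\ne0$ on $\mathcal D$, and by connectedness it has the sign of $Q([\chi],[\chi])>0$. Hence $t>0$, which gives positive proportionality. Conversely, positive proportionality clearly gives equality.

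For nef classes $\gamma,\eta$, use the approximations $\gamma+\tau[\chi]$ and $\eta+\tau[\chi]$ with $\tau>0$; these are K\"ahler. The main point is to check that K\"ahler classes lie in $\mathcal D$. The K\"ahler cone is convex and connected, and it contains $[\chi]$. It therefore suffices to show that $Q(\kappa,\kappa)>0$ for every K\"ahler class $\kappa$. This follows from Theorem~\ref{nhr:master} with $N=1$ and $\beta_1=\kappa$ (take $\varepsilon=0$). In a Lorentzian polynomial all coefficients are nonnegative, and the coefficient of $x_1^2y^{d-2}$ is a positive multiple of $\int_X\kappa^2\Theta$.

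For strict positivity, I would argue instead pointwise, as in the proof of Theorem~\ref{nhr:master}. The coefficient $\partial_\kappa^2\mathcal F_\varepsilon(\omega)$ is integrated from iterated derivatives of $P_\varepsilon$ in positive directions. By \ref{H2} and Proposition~\ref{idealinputs}(i), these derivatives are nonzero and positive on $\C_{P_\varepsilon}\ni\omega$. Letting $\varepsilon\to0$ changes nothing, since the $\varepsilon$ term is constant and vanishes after two derivatives. Thus $Q(\kappa,\kappa)>0$, the segment from $[\chi]$ to $\kappa$ stays inside $\{Q>0\}$, and so $\kappa\in\mathcal D$.

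Applying the strict inequality to the approximations and letting $\tau\to0$ gives the weak inequality by continuity of $Q$. The main obstacle is exactly this membership of the K\"ahler cone in $\mathcal D$, and it is handled by the pointwise positivity argument above.
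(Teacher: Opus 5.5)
Your proposal is correct and follows the paper's proof. You decompose $\eta$ along $\gamma$ and its $Q$-orthogonal complement, use negative definiteness of that complement (from the signature in Corollary~\ref{nhr:lefschetz}) for the inequality and the equality case, and approximate nef classes by K\"ahler ones. You also justify two points the paper only asserts: the coefficient $t$ is positive, by connectedness of $\mathcal D$, and the K\"ahler cone lies in $\mathcal D$, by the pointwise positivity of $\partial_\kappa^2\mathcal F_\varepsilon(\omega)$ from \ref{H2} and Proposition~\ref{idealinputs}(i), which is what the paper's phrase ``mixed positivity'' refers to.
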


\begin{proof}
Mixed positivity places the K\"ahler cone in $\mathcal D$.
For $\gamma,\eta\in \mathcal D$, write $\eta=b\gamma+\eta_0$, where
$b>0$ and $Q(\gamma,\eta_0)=0$. The orthogonal complement of $\gamma$
is negative definite, so
\[
 Q(\gamma,\eta)^2-Q(\gamma,\gamma)Q(\eta,\eta)
 =-Q(\gamma,\gamma)Q(\eta_0,\eta_0)\ge0,
\]
with equality exactly when $\eta_0=0$. Approximation by K\"ahler classes
gives the weak inequality for nef classes.
\end{proof}

\section{Examples}\label{nhr:applications}
We exhibit two examples of spectral ideal G\aa rding equations satisfying Hodge--Riemann relations.
The first example is a degree $4$ spectral equation on a fivefold
with an admissible solution in a non-nef class. Its Hessian class
satisfies the Hodge--Riemann relations but is not a product of three
nef classes. The second example is a degree $5$ spectral equation
whose Lorentzian homogenization cannot be realized as a homogeneous
volume polynomial on the same manifold.

\begin{example}[A non-nef solution class on a fivefold]
\label{nhr:fivefold}
Let
\[
 X=\mathbb P^3\times\mathbb P^1\times\mathbb P^1,
 \qquad [\chi]=A+B+C,
 \qquad \alpha=3A-\frac18B+8C,
\]
where $A,B,C$ are the hyperplane pullbacks. Then
$A^4=B^2=C^2=0$ and $\int_XA^3BC=1$.
Take
\[
 \Lambda=\frac15\chi-\frac1{10}\chi^3
         +\frac7{120}\chi^4-\frac7{320}\chi^5.
\]
The diagonal restriction
\(
 p_x(s)=s^4-6s^2+7s-\frac{21}{8}
\)
has the monotone root sequence
\[
 \rho_0>\rho_1>\rho_2=1>\rho_3=0.
\]
The intersection ring gives $\mathcal I(\alpha)=0$ and, in cohomology,
\[
 \begin{aligned}
 \Lambda\wedge e^\alpha
 &=\frac15[\chi]+\frac{24A^2+23AB+88AC+63BC}{40}\\
 &\quad+\frac{32A^3+21A^2B+216A^2C+157ABC}{40}\\
 &\quad+\frac{16A^3B+796A^3C+363A^2BC}{120}.
 \end{aligned}
\]
Each nonzero homogeneous component has a smooth strictly positive
representative, so
Corollary~\ref{cor:numerical-solvability} gives a smooth admissible
solution in the non-nef class $\alpha$.

Here
\[
 \Theta=\frac1{10}[\chi](\alpha^2-[\chi]^2).
\]
The pairing $\int_Xuv\Theta$ has signature $(1,2)$, and cup product
with $\Theta$ gives an isomorphism
$H^{1,1}(X,\mathbb R)\simeq H^{4,4}(X,\mathbb R)$.
This class is not a product of nef classes: expansion of such a
product shows that the product of its $A^2B$ and $A^2C$ coefficients
is at least the product of its $A^3$ and $ABC$ coefficients, whereas
\[
 \frac{21}{40}\frac{27}{5}-\frac45\frac{157}{40}
 =-\frac{61}{200}<0.
\]
Its signature also follows from Xiao's $4$-positive theorem
\cite[Theorem~A]{Xiao21a}.
\end{example}

\begin{example}[A non-volume homogenization]
\label{nhr:quintic}
Let
\[
 X=\mathbb P^3\times\mathbb P^2,
 \qquad [\chi]=A+B,\qquad \alpha=2A+B,
\]
where $A,B$ are the hyperplane pullbacks. Then $A^4=B^3=0$ and
$\int_XA^3B^2=1$. Take
\[
 \Lambda=1-\frac1{20}\chi^2-\frac1{60}\chi^3
          -\frac{17}{1200}\chi^5.
\]
The diagonal restriction
\[
 p_x(s)=s^5-s^3-s^2-\frac{17}{10}
\]
has the monotone root sequence
\[
 \rho_0>\rho_1=1>\rho_2>\rho_3=\frac1{\sqrt{10}}>\rho_4=0.
\]
The intersection ring gives $\mathcal I(\alpha)=0$ and, in cohomology,
\[
 \begin{aligned}
 \Lambda\wedge e^\alpha
 &=1+2A+B+\frac{39A^2+38AB+9B^2}{20}\\
 &\quad+\frac{73A^3+102A^2B+45AB^2}{60}
       +\frac{110A^3B+63A^2B^2}{120}.
 \end{aligned}
\]
Each nonzero homogeneous component has a smooth strictly positive
representative, so Corollary~\ref{cor:numerical-solvability} gives a
smooth admissible solution in $\alpha$. This equation is also covered
by  \cite{FMForms}.

Since $[\chi]$ and $\alpha$ are ample, Theorem~\ref{nhr:master} shows that
\[
 \mathcal T_{\alpha,0}(y,\mathbf{x})
 =y^5\mathcal I\!\left(\alpha+
             \frac{x_1[\chi]+x_2\alpha}{y}\right)
\]
is Lorentzian. Direct calculation gives
\[
 \det\partial^2\mathcal T_{\alpha,0}(1,0,0)
 =\frac{64387}{6000}\ne0.
\]
Every polynomial $\int_X(yu+x_1v+x_2w)^5$, with
$u,v,w\in H^{1,1}(X,\mathbb R)$, factors through the two-dimensional
space $H^{1,1}(X,\mathbb R)$ and hence has a singular Hessian.
Thus $\mathcal T_{\alpha,0}$ cannot be a homogeneous volume polynomial
on this same $X$.
\end{example}

\bibliographystyle{amsplain}
\bibliography{references}

\end{document}